\title{An Inexact Augmented Lagrangian Method for Second-order Cone Programming with Applications}
\author{
Ling Liang~\thanks{Department of Mathematics, National University of Singapore, 10 Lower Kent Ridge Road, Singapore 119076 \mbox{(\email{liang.ling@u.nus.edu})}.}
\and 
Defeng Sun~\thanks{Department of Applied Mathematics, The Hong Kong Polytechnic University, Hung Hom, Hong Kong \mbox{(\email{defeng.sun@polyu.edu.hk})}. The research of this author is supported in part by Hong Kong Research Grant Council under grant PolyU 153014/18P.}
\and 
Kim-Chuan Toh~\thanks{Department of Mathematics, and Institute of Operations Research and Analytics, National University of Singapore, 10 Lower Kent Ridge Road, Singapore 119076 \mbox{(\email{mattohkc@nus.edu.sg})}. This author is supported in part by the Ministry of Education, Singapore, under its Academic Research Fund Tier 3 grant (MOE-2019-T3-1-010).}
}
\newcommand{\mc}{\multicolumn}
\def\norm#1{\left\lVert #1\right\rVert}
\def\inprod#1#2{\big\langle #1,\,#2\big\rangle}
\newcommand{\cT}{{\cal T}}
\newcommand{\cM}{{\cal M}}
\newcommand{\cN}{{\cal N}}
\newcommand{\cC}{{\cal C}}
\newcommand{\cB}{{\cal B}}
\newcommand{\bea}{\begin{eqnarray*}}
\newcommand{\eea}{\end{eqnarray*}}
\def\mc{\multicolumn}
\def\inprod#1#2{\langle#1,\,#2\rangle}
\def\grad{\nabla}
\def\S{{\cal S}}
\def\cK{{\cal K}}
\def\cB{{\cal B}}
\def\S{\mathbb{S}}
\def\R{\mathbb{R}}
\def\S{\mathbb{S}}
\def\X{\mathbb{X}}
\def\Y{\mathbb{Y}}
\def\Z{\mathbb{Z}}
\def\W{\mathbb{W}}
\begin{document}

\maketitle

\begin{abstract}
	In this paper, we adopt the augmented Lagrangian method (ALM) to solve convex quadratic second-order cone programming problems (SOCPs). Fruitful results on the efficiency of the ALM have been established in the literature. Recently, it has been shown in [Cui, Sun, and Toh, {\em Math. Program.}, 178 (2019), pp. 381--415] that if the quadratic growth condition holds at an optimal solution for the dual problem, then the KKT residual converges to zero R-superlinearly when the ALM is applied to the primal problem. Moreover, Cui, Ding, and Zhao [{\em SIAM J. Optim.}, 27 (2017), pp. 2332-2355] provided sufficient conditions for the quadratic growth condition to hold under the metric subregularity and  bounded linear regularity conditions for solving composite matrix optimization problems involving spectral functions. Here, we adopt these recent ideas to analyze the convergence properties of the ALM	when applied to SOCPs. To the best of our knowledge, no similar work has been done for SOCPs so far. In our paper, we first provide sufficient conditions to ensure the quadratic growth condition for SOCPs. With these elegant theoretical guarantees, we then design an SOCP solver and apply it to solve various classes of SOCPs, such as minimal enclosing ball problems, classical trust-region subproblems, square-root Lasso problems, and DIMACS Challenge problems. Numerical results show that the proposed ALM based solver is efficient and robust compared to the existing highly developed solvers, such as Mosek and SDPT3.
\end{abstract}

\begin{keywords}
  second-order cone programming, augmented Lagrangian method, quadratic growth condition, trust-region subproblem, minimal enclosing ball problem, square-root Lasso problem
\end{keywords}

\begin{AMS}
  90C06, 90C22, 90C25
\end{AMS}

\section{Introduction}
\label{sec-introduction}
Denote the standard $d$-dimensional second-order cone (also called ice cream cone or Lorentz cone) in $\R^d$ $(d\ge 1)$ as
\begin{eqnarray*}
	\cK^d:=
	\left\{
	x = (x_0,x_t)^\top \in \R\times \R^{d-1} \;\left\vert\; x_0\geq \norm{x_t} \right.
	\right\}.
\end{eqnarray*}
Let $\cK$ be the Cartesian product of $r$ second-order cones, i.e.,
\begin{eqnarray*}
	\cK = \cK^{n_1}\times \cdots \times \cK^{n_r}\subseteq \R^n,
\end{eqnarray*}
where $n = n_1+\cdots+n_r$.
In this paper, we consider the following convex quadratic second-order cone programs (SOCPs)
\begin{eqnarray*}
	{\rm (P)}\quad \; &\min_{x=(x_1;x_2;x_3)}&\; f^0(x):=\frac{1}{2}\inprod{x_1}{Hx_1}-\inprod{b}{x_2}+\delta_{\cK}(x_3) \\[5pt]
	&{\rm s.t.} &\; -Hx_1+A^\top x_2+x_3 = c,\; x_1\in {\rm Ran}\,(H)\subseteq \R^n,\;
	x_2\in \R^m,\; x_3\in \R^n,
\end{eqnarray*}
where $H\in \S_+^n$ (the cone of $n\times n$ symmetric positive semidefinite matrices) and $A\in\R^{m\times n}$ are given matrices, ${\rm Ran}(H)$ denotes the range space of $H$, $c\in \R^n$ and $b\in \R^m$ are given vectors, and $\delta_{\cK}(\cdot)$ is the indicator function for the symmetric cone $\cK$. In the above, $(x_1;x_2;x_3)$ denotes the concatenation of the vectors $x_1,x_2,x_3$. For notational simplicity, we denote $\X :={\rm Ran}(H)\times \R^m \times \R^n$ for the rest of this paper. The dual problem associated with ${\rm (P)}$ is given by
\begin{eqnarray*}
{\rm (D)}\quad \; & \max_{y} &\; g^0(y):=-\frac{1}{2}\inprod{y}{Hy}-\inprod{c}{y} - \delta_{\cK}(y) \\[5pt]
&{\rm s.t.} &\; Ay = b,\; y\in \mathbb{R}^{n}.
\end{eqnarray*}
We should mention that in this paper, our naming convention of the primal and dual problems is opposite of the convention adopted in the interior-point methods (IPMs) literature.

Let ${\rm SOL_P}$ and ${\rm SOL_D}$ be the solution sets of ${\rm (P)}$ and ${\rm (D)}$, respectively. The KKT optimality condition for ${\rm (P)}$ and ${\rm (D)}$ is given as follows:
\begin{equation}
\label{kkt-condition}
-Hx_1+A^\top x_2+x_3 = c,\quad Ay -b = 0,\quad H(x_1-y) = 0, \quad \cK \ni x_3 \perp y\in \cK.
\end{equation}
We assume for the rest of this paper that the KKT condition~\eqref{kkt-condition} admits at least one solution. Under this assumption, it is well known that $(\bar{x},\bar{y})$ solves the KKT condition \eqref{kkt-condition} if and only if $\bar{x}\in {\rm SOL_P}$ and $\bar{y}\in {\rm SOL_D}$.

Note that problems ${\rm (P)}$ and ${\rm (D)}$ cover the standard primal and dual linear SOCP problems by simply dropping the quadratic term in the objective function, respectively. One may also observe that problem ${\rm (P)}$ or ${\rm (D)}$ can be reformulated as a linear SOCP with additional affine and rotated quadratic cone constraints. To explain the procedure, we consider problem ${\rm (D)}$ as an illustrative example. Recall that a $d$-dimensional $(d\geq 3)$ rotated quadratic cone is defined by
\begin{eqnarray*}
	\cK_r^d:=
	\big\{
	x = (x_1,x_2,\cdots,x_d)\in \R^d\;|\; 2x_1x_2\geq x_3^2 + \cdots +x_d^2,\; x_1,x_2\geq 0
	\big\}.	
\end{eqnarray*}
From the positive semidefiniteness of $H$, there exists $ R\in \mathbb{R}^{k\times n} $ with $k := \mathrm{rank}(H)\leq n$, such that $H = R^\top R$ and hence we can rewrite problem ${\rm (D)}$ as
\[
\min_{y,t}
\big\{
t+ \inprod{c}{y} \;\left\vert\;  Ay = b,\; y\in \cK,\; \norm{Ry}^2\leq 2t
\right.
\big\}.
\]
Observe that the constraint $\norm{Ry}^2\leq 2t $ is equivalent to $(t,1,Ry)\in \cK_r^{k+2}$. Therefore,  (D) can be reformulated as
\begin{eqnarray}
\label{eq-reformulatedsocp}
\min_{y,t,s,z}
\left\{
t+ \inprod{c}{y} \;\left\vert\;  Ay = b,\; Ry - z = 0,\;s = 1,\; y\in \cK,\; (t,s,z)\in \cK_r^{k+2}
\right.
\right\}.
\end{eqnarray}
From the constraints in \eqref{eq-reformulatedsocp}, we can infer the following potential disadvantages for transforming the quadratic term in the objective into the constraints: (1) One needs to introduce an affine constraint with coefficient matrix of size $(k+1)\times (n+k+2)$. Thus, when $k$ is large, this additional affine constraint will increase the difficulty of computing the search direction (e.g., when an IPM is used, one needs to solve a large linear system to compute the Newton direction). (2) Introducing the extra variables $(y,z,s)$ naturally would increase the computational complexity in solving the problem. (3) The factorization $H=R^\top R$ to begin with can be expensive to compute. The above disadvantages have motivated us to deal with ${\rm (P)}$ and ${\rm (D)}$ directly.

Optimization problems with second-order cone constraints have been studied for quite a long time and still receive constant attention to date. There is a large body of literature on the topic. For comprehensive surveys and numerous important applications of  SOCPs, we refer the reader to~\cite{alizadehsocp2003,KuoORapp2004,LoboAppsocp1998} and references therein. Here, we mention some recent literature in the next three paragraphs to capture the main research topics  on SOCPs.

Optimization problems with second-order cone constraints are of great interest theoretically due to their nonpolyhedral nature. In fact, theoretical results on variational analysis for SOCPs have been well developed. For example, Bonnans and Ram\'{i}rez C.~\cite{JFBonnans2005} performed rigorous and systematic perturbation analysis for nonlinear SOCPs. Outrata and Sun~\cite{OutrataCoderivative2008} then computed the limiting (Mordukhovich) coderivative of the metric projection onto a second-order cone, which can be used to provide a sufficient condition for the Aubin property of the solution map of a complementarity problem as well as to derive certain necessary optimality conditions. Very recently, Hang Mordukhovich, and Sarabi \cite{Hangvariational2018} conducted a second-order variational analysis for SOCPs without imposing any nondegeneracy assumptions.

The importance of SOCPs comes from their modeling power.
Indeed, applications of SOCPs have grown dramatically over the years in engineering, control, management science, and statistics; see for instance~\cite{Baradarpowerflow2013,Bellonisrlasso2011,GoldfarbTV2005,Makrodupperlimitana2007,NamTRS2017,Shivaswamymissing2006,Tsengsensor2007,ZhouMEB2005}. As illustrative examples, we consider minimal enclosing ball problems~\cite{ZhouMEB2005}, classical trust-region subproblems~\cite{NamTRS2017}   and  square-root Lasso problems~\cite{Bellonisrlasso2011} in this paper.

As driven by the needs in applications, many algorithms have also been developed for solving SOCPs. Among them, the most well-developed ones are IPMs. In particular, primal-dual IPMs have been shown to have superior theoretical and practical efficiency, and they are widely used to solve SOCPs to high precision. For references on primal-dual IPMs for solving SOCPs, we recommend~\cite{AndersenIPM2003,CaiAUG2006,Monteirosocp2000,NemirovskiQCQP1996,NesterovIPM1994,Tsuchiyasocp1999}. However, IPMs are sometimes not scalable for large-scale problems due to the high expense needed to solve the large linear system of equations in each iteration. Besides IPMs, smoothing Newton methods~\cite{Chensmoothingnewton2003,Fukushimasmoothing2001} and  semismooth Newton methods~\cite{Kanzowsemismoothsocp2006} have also been applied to solve the KKT system directly. However, limited numerical implementations and experiments were conducted in these works. Therefore, the practical performance of these algorithms remains unclear. Finally, the augmented Lagrangian method (ALM) has also been applied to general nonlinear programming problems with the second-order cone constraint in~\cite{Hangvariational2020,liu2007}. Both papers focus on analyzing the local fast convergence rate of the ALM under some strong conditions, such as the uniform second-order growth condition and the second-order sufficient condition, but with different approaches. Nevertheless, the practical performance of the ALM is not considered in both works. Therefore, the contributions in~\cite{Hangvariational2020,liu2007} are mainly on the theoretical development.

Continuing the research theme on algorithmic development just mentioned above, the present paper aims to design a highly efficient and scalable algorithm for solving large-scale SOCPs. Our algorithmic design is motivated by the recent success in developing an ALM framework for solving semidefinite programming (SDP) problems. Specifically, in \cite{ZhaoSTNewtonCGALM2010}, an inexact ALM combined with a semismooth Newton method has been shown to be highly efficient and scalable for solving large-scale SDP problems. Thus, it is natural for us to apply a similar ALM framework to solve SOCPs directly. Note that this ALM framework, together with its convergence analysis, is well established based on the theoretical work of Rockafellar~\cite{rockafellar1976augmented,rockafellar1976ppa}. Along this line, various papers (see, e.g.,~\cite{CST2017,luqueasymptotic1984}) have extended Rockafellar's work by relaxing some restrictive conditions for convergence. For instance, Cui, Sun, and Toh \cite{CST2017} showed recently that under the calmness condition for the dual solution mapping (equivalently, the quadratic growth condition for the dual problem), the ALM applied to a primal convex composite conic programming problem has an asymptotic R-superlinear convergence rate in term of the KKT residual. Moreover, Cui, Ding, and Zhao \cite{CDZhao2016} showed that under the metric subregularity and bounded linear regularity conditions, the quadratic growth condition can be guaranteed for matrix optimization problems involving symmetric spectral functions. Therefore, we can borrow these ideas to establish the fast convergence rate of the ALM when applied to SOCPs. To the best of our knowledge, no such work has been done for SOCPs so far.

Our contributions in this paper can thus be summarized as follows:
\begin{itemize}
	\item Theoretically, we provide sufficient conditions for ensuring the quadratic growth condition for the dual problem ${\rm (D)}$ under the bounded linear regularity condition and the metric subregularity condition. In particular, we revisit the fact that if a strictly complementary solution exists, then the quadratic growth condition holds for problem ${\rm (D)}$. Thus, sufficient conditions for the R-superlinear convergence of the KKT residual generated by the ALM can also be obtained.

	\item Numerically, we develop a highly efficient and robust SOCP solver for  large-scale SOCPs. Our numerical results show that the solver is comparable to existing state-of-the-art linear SOCP solvers, such as the highly powerful commercial solver Mosek and the efficient open source solver SDPT3, when solving some large-scale linear SOCPs. More specifically, we apply our SOCP solver to solve minimal enclosing ball (MEB) problems, square-root Lasso problems, and some linear SOCPs in DIMACS challenge. For the SOCPs arising from the MEB problems, we show that any feasible solution to the primal problem is constraint nondegenerate and hence the semismooth Newton method employed to solve the ALM subproblems is guaranteed to attain at least a superlinear convergence rate.
	
	\item For solving the convex quadratic SOCPs ${\rm (P)}$ and ${\rm (D)}$, we deal with the quadratic objective functions directly in a concise manner. We do not need to transform the problem into a much larger linear SOCP problem with an additional rotated quadratic cone constraint. The great computational benefit of our approach is demonstrated via the numerical results for solving the classical trust-region subproblems.
\end{itemize}

The rest of the paper is organized as follows. In section~\ref{sec-pre}, we introduce some preliminaries and notation which will be used in this paper. Recently developed convergence results of the ALM and related topics on the quadratic growth condition for the dual problem ${\rm (D)}$ are presented in sections \ref{sec-ALM} and \ref{sec-quad-growth}. A highly efficient semismooth Newton method for solving the ALM subproblems is presented in section~\ref{sec-issn} with some well-known convergence properties. In section~\ref{sec-numexp}, we design an SOCP solver based on the proposed ALM. Moreover, we discuss the efficient implementation of the solver and conduct extensive numerical experiments to illustrate the efficiency and robustness of the proposed algorithm. Finally, we conclude the paper in section~\ref{sec-conclusion}.

\section{Preliminaries}
\label{sec-pre}
In this section, we first list some notation and present some basic
material on the projection operator onto the standard second-order cone.

\subsection{Notation and definitions}
\label{subsec-notation}
We use $\Y$, $\Z$ and $\W$ to denote generic finite-dimensional real Euclidean spaces. For a given closed convex cone $\cC$, we use $\cC^\circ$ and $\cC^*$ to denote the polar and dual cones of $\cC$, respectively. We use $N_\cC(x)$ and $\cT_\cC(x)$ to denote the normal and tangent cones of $\cC$ at a point $x\in \cC$, respectively.	

Let $ f:\W \rightarrow [-\infty, +\infty] $ be a given convex function. The effective domain of $f$ is denoted as ${\rm dom}\,(f)$. Moreover, the subdifferential of $f$ at the point $x\in {\rm dom}(\, f)$ is denoted as $\partial f(x)$. We use $f^*$ to denote the convex conjugate function of $f$, i.e., $f^*(z) = \sup_x\,\{\inprod{z}{x}-f(x)\;|\; x\in {\rm dom}\,(f)\}$. Let $D\subseteq \W$ be a set. We use $\delta_D(\cdot)$ to denote the indicator function over the set $D$. If the set $D$ is closed and convex, then the metric projection of $x\in \W$ onto $D$ is defined by $\Pi_D(x):={\rm arg}\min\{ \norm{x-s}\;|\;s\in D \}$. Moreover, the distance for a point $x\in \W$ to the set $D$ is given by ${\rm dist}\,(x,D):=\inf_{x\in D}\,\norm{x-d}$. For more useful properties related to convex functions and convex sets, we refer the reader to the monograph of Rockafellar~\cite{rockafellar1970convex}.

The following definitions on the Lipschitz-like continuity for a set-valued mapping are commonly involved in derivation of the convergence rate for the ALM.

\begin{definition}\label{def-Lip}
	\begin{enumerate}
		\item A set-valued mapping $\Phi:\W\rightrightarrows\Y$ is Lipschitz continuous at $u\in \W$ with modulus $\kappa > 0$ if $\Phi(u) = \{v\}$ and there exists a positive constant $\epsilon$ such that
		\begin{eqnarray*}
			\norm{v'-v}\leq \kappa \norm{u'-u}\quad \forall\,v'\in \Phi(u'),\quad u'\in \mathbb{B}_\epsilon(u).
		\end{eqnarray*}
		\item A set-valued mapping $\Phi:\W\rightrightarrows\Y$ is upper Lipschitz continuous at $u\in \W$ with modulus $\kappa > 0$ if there exists a positive constant $\epsilon$ such that
		\begin{eqnarray*}
			{\rm dist}\,(v',\Phi(u))\leq \kappa \norm{u'-u}\quad \forall\,v'\in \Phi(u'),\quad u'\in \mathbb{B}_\epsilon(u).
		\end{eqnarray*}
	\end{enumerate}
\end{definition}

Next, we define some mappings that are closely related to the perturbation theory of optimization problems. We will use these mappings to analyze the convergence property of the proposed ALM.

Let $l:\X\times \R^n\rightarrow [-\infty,+\infty]$ be the Lagrangian function in the extended form:
\begin{eqnarray*}
l(x,y):=
\begin{cases}
f^0(x)+\inprod{y}{-Hx_1+A^\top x_2+x_3-c} & x\in \text{dom}(f^0),\\
+\infty & x\notin \text{dom}(f^0).
\end{cases}
\end{eqnarray*}
Denote the essential objective functions of $ {\rm (P)} $ and $ {\rm (D)} $, respectively, by
\begin{eqnarray*}
	f(x) & :=&\sup_y\;l(x,y) =
	\begin{cases}
		f^0(x)  & -Hx_1+A^\top x_2+x_3=c,\\
		+\infty & {\rm otherwise},
	\end{cases}
	\\[5pt]
	g(x) &:=& \inf_x\;l(x,y)=
	\begin{cases}
		g^0(y)  & Ay = b,\\
		-\infty & {\rm otherwise}.
	\end{cases}
\end{eqnarray*}
Note that the functions $l(\cdot)$, $f(\cdot)$ and $g(\cdot)$ are convex-concave, convex and concave, respectively. Therefore, their subdifferentials are well-defined. In particular, we can define the following set-valued mappings $T_l:\X\times \R^n \rightrightarrows\X\times \R^n$, $T_f:\X\rightrightarrows \X$, and $T_g:\R^n\rightrightarrows \R^n$ by
\begin{equation*}
T_l(x,y):=\big\{ (u,v)\in \X\times \R^n\;\left\vert\; (u,-v)\in \partial l(x,y) \right.\big\},\quad (x,y)\in \X\times\R^n,
\end{equation*}
$ T_f := \partial f$, and $ T_g:=-\partial g $, respectively.

Consider the following linearly perturbed form of problem $ {\rm (P)} $ with perturbation parameters $(u,v)\in \X\times \R^n$:
\begin{eqnarray*}
({\rm P}(u,v))\quad \min_x\;
\left\{
f^0(x)-\inprod{x}{u}\;\left\vert\; -Hx_1+A^\top x_2+x_3+v-c = 0
\right.
\right\}.
\end{eqnarray*}
Then according to~\cite{rockafellar1976augmented}, the inverse mapping of three mappings $T_l$, $T_f$, and $ T_g $ are well-defined (since $T_l$, $T_f$, and $ T_g $ are shown to be maximal monotone operators) and can be viewed as the solution mappings of their corresponding perturbed problems. Indeed, one can verify that
\begin{eqnarray*}
	\left\{
	\begin{array}{rll}
		T_l(u,v)^{-1} &=\; \text{the set of all KKT points to } ({\rm P}(u,v)),\\[5pt]
		T_f(u)^{-1}  &= \;\text{the set of all optimal solution to }({\rm P}(u,0)),\\[5pt]
		T_g(v)^{-1}  &= \; \text{the set of all optimal solution to }({\rm D}(0,v)),
	\end{array}
	\right.
\end{eqnarray*}
where $({\rm D}(u,v))$ is the ordinary dual of $ ({\rm P}(u,v)) $ for any $(u,v)\in \X \times \R^n$. Therefore, we may call $T_l^{-1}$ the KKT solution mapping, $T_f^{-1}$ the primal solution mapping, and $T_g^{-1}$ the dual solution mapping.

\subsection{Projection onto the second-order cone}
\label{subsec-projsoc}
We next recall some important properties on the projection onto the second-order cone. We will pay particular attention to the differential properties for the projection mapping $\Pi_K(\cdot)$, where for notational simplicity  we use $K$ to denote  a single second-order cone in $\R^d$, i.e.,
\begin{eqnarray*}
	K:=\{ x=(x_0,x_t)^\top\in \R^d \;|\; x_0\geq \norm{x_t}\}.
\end{eqnarray*}
The following lemma provides an exact formula of the projection onto the second-order cone (see, e.g.,~\cite{Fukushimasmoothing2001}).
\begin{lemma}
	\label{lem-projsoc}
	For any $x=(x_0,x_t)^\top\in \R^d$, the projection onto the second-order cone $K$ is given by
	\begin{eqnarray*}
		\Pi_K(x) =
		\begin{cases}
			x & \norm{x_t}\leq x_0,\\
			0 & \norm{x_t}\leq -x_0,\\
			\frac{1}{2}(x_0+\norm{x_t})  {\Big(1,\frac{x_t}{\norm{x_t} }\Big )^{\top}} &{\rm otherwise}.
		\end{cases}
	\end{eqnarray*}
\end{lemma}
Since $\Pi_K(\cdot)$ is a globally Lipschitz continuous mapping with modulus 1 on $\R^d$, i.e.,
\begin{eqnarray*}
	\norm{\Pi_K(x)-\Pi_K(y)}\leq \norm{x-y} \quad \forall\, x, \,y \in \R^d,
\end{eqnarray*}
it is well known that by Rademacher's Theorem~\cite{FedererGeometric2014}, $\Pi_K(\cdot)$ is Fr\'{e}chet differentiable almost everywhere on any open set $\mathcal{O} \subseteq \R^d$. Thus, we can define the B-subdifferential of $\Pi_K(\cdot)$ at a point $x\in \R^d$ as
\begin{eqnarray*}
	\partial_B \Pi_K(x):=\{ \lim_{i\rightarrow \infty} J\Pi_K(x^i)\;|\; x^i\rightarrow x,\; J\Pi_K(x^i) \text{ exists}  \},
\end{eqnarray*}
where $J\Pi_K(x)$ denotes the Jacobian  of  $\Pi_K(\cdot)$  at $x\in \R^d$ if it exists. Then, for any $x\in \R^d$,  the Clarke generalized Jacobian of $\Pi_K(x)$, namely, $ \partial \Pi_K(x) $, is defined as the convex hull of $ \partial_B \Pi_K(x) $. The following proposition gives the concrete expression of the elements in $ \partial_B \Pi_K(x) $. We refer the reader to~\cite{PangSS2003, Kanzowsemismoothsocp2006,OutrataCoderivative2008} for more details.

\begin{proposition}
	\label{prop-projsoc}
	Given an arbitrary point $x=(x_0,x_t)^\top\in \R^d$, each element $V\in  \partial_B \Pi_K(x) $ has the following representations:
	\begin{enumerate}
		\item If $x_0\neq \pm\norm{x_t}$, $\Pi_K(\cdot)$ is continuously differentiable near $x$ with
		\begin{eqnarray*}
			J\Pi_K(x) =
			\begin{cases}
				0   & x_0< -\norm{x_t}, \\[5pt]
				I_d &  x_0>\norm{x_t}, \\
				\frac{1}{2}
				\begin{pmatrix}
					1 & \frac{x_t^\top}{\norm{x_t}} \\
					\frac{x_t}{\norm{x_t}} & (1+ \frac{x_0}{\norm{x_t}})I_{d-1} - \frac{x_0}{\norm{x_t}^3}x_tx_t^\top
				\end{pmatrix} & -\norm{x_t}< x_0< \norm{x_t}.
			\end{cases}
		\end{eqnarray*}
		\item If $x_t\neq 0$ and $x_0 = \norm{x_t}$, then
		\begin{eqnarray*}
			V\in
			\left\{
			I_d, \frac{1}{2}
			\begin{pmatrix}
				1 & \frac{x_t^\top}{\norm{x_t}} \\
				\frac{x_t}{\norm{x_t}} & 2I_{d-1} - \frac{x_t}{\norm{x_t}}\frac{x_t^\top}{\norm{x_t}}
			\end{pmatrix}
			\right\}.
		\end{eqnarray*}
		\item If $x_t\neq 0$ and $x_0 = -\norm{x_t}$, then
		\begin{eqnarray*}
			V\in
			\left\{
			{\bf 0}, \frac{1}{2}
			\begin{pmatrix}
				1 & \frac{x_t^\top}{\norm{x_t}}
				\\ \frac{x_t}{\norm{x_t}} & \frac{x_t}{\norm{x_t}}\frac{x_t^\top}{\norm{x_t}}
			\end{pmatrix}
			\right\}.
		\end{eqnarray*}
		\item If $x_t = 0$ and $x_0 = 0$, then
		\begin{eqnarray*}
			V\in \Big\{ {\bf 0},  I_d  \Big\}\, \bigcup\,
			\left\{
			\frac{1}{2}
			\begin{pmatrix}
				1 & \omega^\top
				\\ \omega & (1+\rho)I_{d-1} - \rho \omega\omega^\top
			\end{pmatrix}\;:\; |\rho|\leq 1,\; \norm{\omega} = 1 \right\}.
		\end{eqnarray*}
	\end{enumerate}
\end{proposition}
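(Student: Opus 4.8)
The plan is to derive the formulas for $\partial_B\Pi_K(x)$ by carefully examining each of the four regimes determined by the position of $x=(x_0,x_t)^\top$ relative to the boundary manifolds $x_0=\norm{x_t}$ and $x_0=-\norm{x_t}$. The starting point is the closed-form expression for $\Pi_K(\cdot)$ in Lemma \ref{lem-projsoc}, which already partitions $\R^d$ into the interior of $K$ (where $\Pi_K$ is the identity), the interior of $-K^* = -K$ (where $\Pi_K\equiv 0$), and the ``middle'' region $-\norm{x_t}<x_0<\norm{x_t}$ (with $x_t\neq 0$) where $\Pi_K(x)=\tfrac12(x_0+\norm{x_t})(1,x_t/\norm{x_t})^\top$.

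\emph{Case (1): $x_0\neq\pm\norm{x_t}$.} Here $x$ lies in one of the three open regions, on each of which $\Pi_K$ has a simple analytic formula, so $\Pi_K$ is $C^\infty$ near $x$ and $\partial_B\Pi_K(x)=\{J\Pi_K(x)\}$. The only nontrivial computation is in the middle region: I would differentiate $\tfrac12(x_0+\norm{x_t})(1,x_t/\norm{x_t})^\top$ componentwise, using $\nabla_{x_t}\norm{x_t}=x_t/\norm{x_t}$ and $\nabla_{x_t}(x_t/\norm{x_t}) = I_{d-1}/\norm{x_t}-x_tx_t^\top/\norm{x_t}^3$, and collect terms into the stated $2\times2$ block matrix. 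This is routine once one is careful about the cross terms coming from the product rule.

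\emph{Cases (2)--(4): $x$ on a boundary manifold.} This is where the real work lies. For case (2), $x_t\neq 0$ and $x_0=\norm{x_t}$ (boundary of $K$, away from the apex): I would approach $x$ along sequences $x^i\to x$ lying in each adjacent open region where $\Pi_K$ is differentiable. Approaching from within $\mathrm{int}\,K$ gives the limit $I_d$; approaching through the middle region $-\norm{x_t}<x_0<\norm{x_t}$ and passing to the limit $x_0\to\norm{x_t}$ in the middle-region Jacobian gives $\tfrac12\left(\begin{smallmatrix}1 & x_t^\top/\norm{x_t}\\ x_t/\norm{x_t} & 2I_{d-1}-x_tx_t^\top/\norm{x_t}^2\end{smallmatrix}\right)$ (note $1+x_0/\norm{x_t}\to 2$ and $x_0/\norm{x_t}^3\to 1/\norm{x_t}^2$). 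One must also check that no other limiting Jacobians arise — i.e. that these are the only two limit points — which follows because near such $x$ the function $\Pi_K$ is differentiable precisely on the complement of the single smooth hypersurface $\{x_0=\norm{x_t}\}$, whose two sides are exactly $\mathrm{int}\,K$ and the middle region. Case (3) is entirely analogous with the roles of $\mathrm{int}(-K)$ (limit $\mathbf 0$) and the middle region (limit $\tfrac12\left(\begin{smallmatrix}1 & x_t^\top/\norm{x_t}\\ x_t/\norm{x_t} & x_tx_t^\top/\norm{x_t}^2\end{smallmatrix}\right)$ after sending $x_0\to-\norm{x_t}$, using $1+x_0/\norm{x_t}\to 0$) played off against each other.

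\emph{Case (4): the apex $x=0$.} Here all three open regions touch, plus one can approach $0$ along rays inside the middle region with $x_t/\norm{x_t}\to\omega$ for any unit vector $\omega$ and $x_0/\norm{x_t}\to\rho$ for any $\rho\in(-1,1)$, yielding the continuum $\tfrac12\left(\begin{smallmatrix}1 & \omega^\top\\ \omega & (1+\rho)I_{d-1}-\rho\,\omega\omega^\top\end{smallmatrix}\right)$; by continuity of this expression in $\rho$ one gets the closed parameter range $|\rho|\leq1$, and $\rho=\pm1$ recovers (the matrix versions approaching) $I_d$ and $\mathbf 0$, while approaching $0$ through $\mathrm{int}\,K$ or $\mathrm{int}(-K)$ gives $I_d$ and $\mathbf 0$ directly. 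The main obstacle — and the step requiring the most care — is arguing completeness in cases (2)--(4): showing that the displayed sets are not merely \emph{contained in} but \emph{equal to} $\partial_B\Pi_K(x)$, which amounts to verifying that the only directions of approach producing distinct limiting Jacobians are the ones enumerated. For this I would rely on the explicit regional description of the differentiability set of $\Pi_K$ from Lemma \ref{lem-projsoc} together with a direct (if slightly tedious) limit analysis; alternatively one may simply cite \cite{PangSS2003, Kanzowsemismoothsocp2006, OutrataCoderivative2008}, where these formulas are established in detail.
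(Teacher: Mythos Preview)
Your plan is sound and in fact goes well beyond what the paper does: the paper gives no proof of this proposition at all, but simply refers the reader to \cite{PangSS2003, Kanzowsemismoothsocp2006, OutrataCoderivative2008}. Your case-by-case limit analysis based on Lemma~\ref{lem-projsoc} is exactly the right way to derive these formulas from scratch, and your treatment of cases (1)--(3) is correct.

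One small slip in case (4): you write that ``$\rho=\pm1$ recovers (the matrix versions approaching) $I_d$ and $\mathbf 0$''. This is not true. At $\rho=1$ the parametrized matrix is $\tfrac12\left(\begin{smallmatrix}1 & \omega^\top\\ \omega & 2I_{d-1}-\omega\omega^\top\end{smallmatrix}\right)$, and at $\rho=-1$ it is $\tfrac12\left(\begin{smallmatrix}1 & \omega^\top\\ \omega & \omega\omega^\top\end{smallmatrix}\right)$; neither equals $I_d$ or $\mathbf 0$ (for instance, the $(1,1)$ entry is $1/2$ in both). The matrices $I_d$ and $\mathbf 0$ arise \emph{only} from sequences approaching the apex through $\mathrm{int}\,K$ and $\mathrm{int}(-K)$, respectively --- which you also correctly note --- and this is precisely why the statement of the proposition lists $\{\mathbf 0, I_d\}$ as a separate set in the union rather than as endpoints of the parametrized family. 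The closure argument giving $|\rho|\le 1$ from $\rho\in(-1,1)$ is still needed and correct; it just does not collapse onto $I_d$ and $\mathbf 0$. With this correction your argument for case (4) is complete.
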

Recall that $\cK = \cK^{n_1}\times \cdots \times \cK^{n_r} \in \R^n$ is the Cartesian product of $r$ second-order cones. It is clear that for any $x=(x_1;\cdots;x_r)\in \R^n$,
\begin{eqnarray*}
	V:={\rm Diag}(V_1,\cdots,V_r)\in \partial_B\Pi_{\cK}(x),\quad V_j\in \partial_B\Pi_{\cK^{n_j}}(x_j),\quad 1\leq j\leq r.
\end{eqnarray*}

To apply the semismooth Newton method for solving the ALM subproblems presented later in the paper, we also need the concept of semismoothness.
\begin{definition}
	\label{def-semismooth}
	Let $\Phi:\W\rightarrow \Y$ be a locally Lipschitz continuous function on the open set $\mathcal{O}\subseteq \W$.
	$\Phi$ is said to be semismooth at a point $x\in \mathcal{O}$ if $\Phi$ is directionally differentiable at $x$ and for any $V\in \partial \Phi(x+\Delta x)$,
	\begin{eqnarray*}
		\Phi(x+\Delta x) - \Phi(x) - V\Delta x = o(\norm{\Delta x}),\quad \Delta x\rightarrow 0.
	\end{eqnarray*}
	$ \Phi $ is said to be strongly semismooth at $x\in \mathcal{O}$ if $ \Phi $ is semismooth at $x$ and for any $V\in \partial \Phi(x+\Delta x)$,
	\begin{eqnarray*}
		\Phi(x+\Delta x) - \Phi(x) - V\Delta x = o(\norm{\Delta x}^2),\quad \Delta x\rightarrow 0.
	\end{eqnarray*}
	$ \Phi $ is said to be a (strongly) semismooth function on $\mathcal{O}$ if it is (strongly) semismooth for every point $x\in \mathcal{O}$.
\end{definition}

The next lemma shows that $\Pi_{\cK}(\cdot)$ is strongly semismooth on $ \mathbb{R}^{n} $. For a proof of this lemma, see~\cite{Chensmoothingnewton2003,Hayashisocpcp2005}.
\begin{lemma}
	\label{lem-semismooth}
	The projection mapping $\Pi_\cK(\cdot)$ is strongly semismooth everywhere.
\end{lemma}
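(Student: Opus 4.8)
The plan is to reduce to a single second-order cone and then verify strong semismoothness pointwise, splitting into three cases according to the location of the reference point. First I would exploit the block structure: since $\cK=\cK^{n_1}\times\cdots\times\cK^{n_r}$, for $x=(x_1;\cdots;x_r)$ we have $\Pi_\cK(x)=(\Pi_{\cK^{n_1}}(x_1);\cdots;\Pi_{\cK^{n_r}}(x_r))$, and a Cartesian-product map is (strongly) semismooth exactly when each of its component maps is (the generalized Jacobian being block diagonal and the remainders splitting accordingly). Hence it suffices to treat a single cone $K=\cK^d\subseteq\R^d$; for $d=1$, $\Pi_K(t)=\max\{t,0\}$ is piecewise linear, hence strongly semismooth, so we may assume $d\ge 2$ and fix $\bar x=(\bar x_0,\bar x_t)^\top$, using the closed-form expression in Lemma~\ref{lem-projsoc} and the global $1$-Lipschitz property recalled thereafter. \emph{Case 1: $\bar x_0\neq\pm\norm{\bar x_t}$.} Then $\bar x$ lies in the interior of one of the three regions of Lemma~\ref{lem-projsoc}, on each of which $\Pi_K$ is real-analytic (the identity, the zero map, or $x\mapsto\frac{1}{2}(x_0+\norm{x_t})(1,x_t/\norm{x_t})^\top$, this last region forcing $x_t\ne 0$); so $\Pi_K$ is $C^\infty$ near $\bar x$ and thus strongly semismooth there.

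\emph{Case 2: $\bar x_t\neq 0$ and $\bar x_0=\pm\norm{\bar x_t}$.} Now $\bar x$ lies on one of the analytic hypersurfaces $\{x_0=\pm\norm{x_t}\}$ and, since $\bar x_t\ne 0$, we may work on a neighborhood $\mathcal{O}$ of $\bar x$ that avoids the origin, where $x_t$ stays bounded away from $0$. On $\mathcal{O}$ one checks directly from Lemma~\ref{lem-projsoc} that, say for $\bar x_0=-\norm{\bar x_t}$,
\begin{equation*}
\Pi_K(x)=\frac{1}{2}\,\max\{x_0+\norm{x_t},\,0\}\Big(1,\,\frac{x_t}{\norm{x_t}}\Big)^{\!\top},
\end{equation*}
with an analogous $\max$-formula when $\bar x_0=\norm{\bar x_t}$. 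On $\mathcal{O}$ the scalar map $x\mapsto x_0+\norm{x_t}$ and the vector $(1,x_t/\norm{x_t})^\top$ are real-analytic, $\max\{\cdot,0\}$ is strongly semismooth, and sums, products, and compositions of strongly semismooth maps are strongly semismooth; hence $\Pi_K$ is strongly semismooth on $\mathcal{O}$, in particular at $\bar x$.

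\emph{Case 3: $\bar x=0$.} The crucial fact is that $\Pi_K$ is positively homogeneous of degree one, $\Pi_K(tx)=t\,\Pi_K(x)$ for all $t>0$ (immediate from Lemma~\ref{lem-projsoc}); in particular $\Pi_K$ is directionally differentiable at $0$ with $\Pi_K'(0;h)=\Pi_K(h)$, and Euler's identity $J\Pi_K(x)x=\Pi_K(x)$ holds wherever $\Pi_K$ is differentiable. For $x\ne 0$ and any $V=\lim_i J\Pi_K(x^i)\in\partial_B\Pi_K(x)$ with $x^i\to x$, the bound $\norm{J\Pi_K(x^i)}\le 1$ (global $1$-Lipschitzness) yields $Vx=\lim_i J\Pi_K(x^i)x^i=\lim_i\Pi_K(x^i)=\Pi_K(x)$, and taking convex combinations gives $Vx=\Pi_K(x)$ for every $V\in\partial\Pi_K(x)$. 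Consequently, for $\Delta x\to 0$ and any $V\in\partial\Pi_K(0+\Delta x)$,
\begin{equation*}
\Pi_K(0+\Delta x)-\Pi_K(0)-V\Delta x=\Pi_K(\Delta x)-V\Delta x=0=o(\norm{\Delta x}^2),
\end{equation*}
so $\Pi_K$ is strongly semismooth at $0$. Combining the three cases shows $\Pi_K$ is strongly semismooth on $\R^d$, and the block reduction then gives the claim for $\Pi_\cK$ on $\R^n$.

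\textbf{Main obstacle.} The genuinely delicate step is Case 2: one must be sure that near a nonzero boundary point $\Pi_K$ is a continuous patching of \emph{honest} $C^2$ (here real-analytic) pieces along a smooth hypersurface, with derivatives uniformly controlled nearby, since otherwise a piecewise-smoothness argument would not deliver the quadratic-order remainder needed for \emph{strong} (as opposed to plain) semismoothness. Exhibiting $\Pi_K$ explicitly through the $\max$-formula above, valid precisely because $x_t$ is bounded away from $0$ in that regime, is the cleanest way to sidestep invoking a general ``piecewise-$C^2$ implies strongly semismooth'' theorem. Cases 1 and 3, by contrast, are routine once one observes local $C^\infty$ smoothness and positive homogeneity, respectively.
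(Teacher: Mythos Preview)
Your argument is correct. The block reduction is standard, Case~1 is immediate, Case~3 is a clean use of positive homogeneity (the identity $V\Delta x=\Pi_K(\Delta x)$ for every $V\in\partial\Pi_K(\Delta x)$ indeed makes the remainder identically zero), and in Case~2 your $\max$-representation exhibits $\Pi_K$ locally as a finite composition of the piecewise-linear map $\max\{\cdot,0\}$ with real-analytic maps on a region where $x_t$ is bounded away from $0$, which is enough for strong semismoothness. One small caution: the blanket claim that ``sums, products, and compositions of strongly semismooth maps are strongly semismooth'' is not true without qualification; what you are actually using is that $\max\{\cdot,0\}$ composed with a $C^{1,1}$ scalar map and then multiplied by a $C^{1,1}$ vector field remains strongly semismooth (equivalently, that $\Pi_K$ is $PC^2$ near such $\bar x$), and it would be worth saying this explicitly.

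By contrast, the paper gives no argument of its own: it simply refers to \cite{Chensmoothingnewton2003,Hayashisocpcp2005}. The proofs in those references proceed via the Jordan-algebraic spectral (L\"owner-operator) viewpoint: one writes $\Pi_K(x)=\max\{\lambda_1(x),0\}\,u_1(x)+\max\{\lambda_2(x),0\}\,u_2(x)$ with spectral values $\lambda_i(x)=x_0\pm\norm{x_t}$ and spectral vectors $u_i(x)$, and then shows that such vector-valued L\"owner operators inherit strong semismoothness from the underlying scalar function $t\mapsto\max\{t,0\}$. Your Case~2 is essentially a localized, bare-hands instance of that same spectral identity; your Case~3 argument via homogeneity is somewhat more elementary than the corresponding step in the cited works, which at the origin typically appeal either to the explicit description of $\partial_B\Pi_K(0)$ or to general results on semismoothness of spectral operators. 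The net effect is that your proof is self-contained and avoids importing the L\"owner-operator machinery, at the cost of doing the case analysis by hand.
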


\section{Convergence results of the ALM}
\label{sec-ALM}
In this section, we analyze the convergence properties of the ALM applied to problem $ {\rm (P)} $. Even though the theory has been highly developed, we present certain important results here to make our paper self-contained.

Let $\sigma>0$ be a given penalty parameter. The augmented Lagrangian function associated with problem $ {\rm (P)} $ for any $ (x,y)\in \X\times \R^n $ is defined as
\begin{equation*}
L_\sigma(x,y):=f^0(x)+\frac{1}{2\sigma}
\left(
\norm{\sigma \big( -Hx_1+A^\top x_2+x_3-c\big)+y}^2 - \norm{y}^2
\right) .
\end{equation*}
At the $ (k+1) $-th iteration, for a given sequence of penalty parameters $0<\sigma_k\uparrow \sigma_\infty\leq \infty$ and an initial point $y^0\in \R^n$, the inexact ALM performs the following scheme:
\begin{eqnarray}
\label{alg-alm}
\left\{
\begin{aligned}
& x^{k+1}:=(x_1^{k+1},x_2^{k+1},x_3^{k+1})\;\approx\; {\rm arg}\min_x \;\{ f_k(x):=L_{\sigma_k}(x,y^k) \},\\
& y^{k+1} \;:=\; y^k+\sigma(-Hx_1^{k+1}+A^\top x_2^{k+1}+x_3^{k+1}-c),\quad k\geq 0.
\end{aligned}
\right.
\end{eqnarray}

The rate of convergence for the ALM can be obtained by considering its connection with the dual proximal point algorithm (PPA). This connection was explored in Rockafellar's classical papers~\cite{rockafellar1976augmented,rockafellar1976ppa}. More specifically, by combining Theorem 4 and Theorem 5 in~\cite{rockafellar1976augmented}, one obtains the following fundamental convergence result for ALM.

\begin{theorem}
	\label{thm-convergeppaalm}
	Assume that ${\rm SOL_D}$ is nonempty, i.e., $T_g^{-1}(0) \neq \emptyset$. Let $\{(x^k,y^k)\}$ be the infinite sequence generated by the ALM in~\eqref{alg-alm} under the criterion for inexact computation,
	\[
	{\rm (A)}\quad f_k(x^{k+1}) - \inf \;f_k\leq \frac{\epsilon_k^2}{2\sigma_k},
	\]
	where $\{\epsilon_k\}$ is a summable and nonnegative sequence in $\R$. Then the whole sequence $\{y^k\}$ converges to some $y^\infty\in {\rm SOL}_D$.
	
	If $T_g^{-1}$ is Lipschitz continuous at the origin with modulus $\kappa_g >0$ and the ALM is also executed under the criterion
	\[
	{\rm (B)} \quad f_k(x^{k+1}) - \inf\; f_k \leq \frac{\delta_k^2}{2\sigma_k}\norm{y^{k+1} - y^k}^2
	\]
	with a summable and nonnegative sequence $\{\delta_k\}$. Then $y^k\rightarrow y^\infty$ as $ k \rightarrow\infty $, where in this case $y^\infty$ is the unique solution for problem (D). Furthermore, it holds that
	\[
	\norm{y^{k+1} - y^\infty}\leq  \frac{\kappa_g (\kappa_g^2 + \sigma_k^2)^{-1/2} + \delta_k}{1-\delta_k} \norm{y^k - y^\infty}
	\]
	for all $k$ sufficiently large.
\end{theorem}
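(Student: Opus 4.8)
The plan is to identify the inexact ALM~\eqref{alg-alm} with the inexact proximal point algorithm (PPA) applied to the maximal monotone operator $T_g=-\partial g$, and then to transcribe Rockafellar's convergence and rate theorems for the inexact PPA (Theorems~1 and~2 of~\cite{rockafellar1976ppa}, i.e.\ Theorems~4 and~5 of~\cite{rockafellar1976augmented}). Writing $r(x):=-Hx_1+A^\top x_2+x_3-c$, one has $L_\sigma(x,y)=f^0(x)+\inprod{y}{r(x)}+\tfrac{\sigma}{2}\norm{r(x)}^2$, and a short Fenchel-duality computation shows
\[
\inf_x L_{\sigma_k}(x,y^k)=\max_{y'\in\R^n}\Big\{\,g(y')-\tfrac{1}{2\sigma_k}\norm{y'-y^k}^2\,\Big\},
\]
whose (unique) maximizer is $\bar y^{k+1}:=(I+\sigma_k T_g)^{-1}(y^k)$. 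Hence the \emph{exact} ALM update gives $y^{k+1}=\bar y^{k+1}$, which is precisely the exact dual PPA step, while the inexact update $y^{k+1}=y^k+\sigma_k r(x^{k+1})$ approximates it.

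The crux — and the step I expect to be the main obstacle — is to show that the computable criteria (A) and (B) imply Rockafellar's abstract PPA criteria. The engine is the identity
\[
f_k(x^{k+1})-\inf f_k=\tfrac{1}{2\sigma_k}\norm{y^{k+1}-\bar y^{k+1}}^2+\Big(l(x^{k+1},\bar y^{k+1})-g(\bar y^{k+1})\Big)\;\ge\;\tfrac{1}{2\sigma_k}\norm{y^{k+1}-\bar y^{k+1}}^2,
\]
which I would derive by substituting $r(x^{k+1})=\tfrac{1}{\sigma_k}(y^{k+1}-y^k)$ into $f_k(x^{k+1})=L_{\sigma_k}(x^{k+1},y^k)$, using $\inf f_k=g(\bar y^{k+1})-\tfrac{1}{2\sigma_k}\norm{\bar y^{k+1}-y^k}^2$ from the display above, and collecting terms; the remainder $l(x^{k+1},\bar y^{k+1})-g(\bar y^{k+1})=l(x^{k+1},\bar y^{k+1})-\inf_x l(x,\bar y^{k+1})$ is nonnegative. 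Consequently criterion (A) forces $\norm{y^{k+1}-\bar y^{k+1}}\le\epsilon_k$ and criterion (B) forces $\norm{y^{k+1}-\bar y^{k+1}}\le\delta_k\norm{y^{k+1}-y^k}$, which are exactly Rockafellar's criteria (A$'$) and (B$'$) for the inexact PPA.

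With this dictionary in hand, I would obtain the first assertion directly from the convergence theorem for the inexact PPA: $T_g$ is maximal monotone, $T_g^{-1}(0)={\rm SOL_D}\neq\emptyset$, $\{\sigma_k\}$ is bounded away from $0$, and $\sum_k\epsilon_k<\infty$, hence $y^k\to y^\infty$ for some $y^\infty\in T_g^{-1}(0)={\rm SOL_D}$. For the second assertion, Lipschitz continuity of $T_g^{-1}$ at the origin first forces $T_g^{-1}(0)=\{y^\infty\}$ to be a singleton (so $y^\infty$ is the unique solution of~${\rm (D)}$), and it yields the standard exact-PPA contraction $\norm{\bar y^{k+1}-y^\infty}\le\kappa_g(\kappa_g^2+\sigma_k^2)^{-1/2}\norm{y^k-y^\infty}$ once $y^k$ is close enough to $y^\infty$ (proved from monotonicity of $T_g$ together with the Lipschitz error bound applied to $(y^k-y^{k+1})/\sigma_k\in T_g(\bar y^{k+1})$). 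Combining this with criterion (B$'$) and the triangle inequality,
\[
\norm{y^{k+1}-y^\infty}\le\norm{y^{k+1}-\bar y^{k+1}}+\norm{\bar y^{k+1}-y^\infty}\le\delta_k\big(\norm{y^{k+1}-y^\infty}+\norm{y^k-y^\infty}\big)+\kappa_g(\kappa_g^2+\sigma_k^2)^{-1/2}\norm{y^k-y^\infty},
\]
and rearranging (valid since $\delta_k\to0$, so $\delta_k<1$ for $k$ large) gives the stated estimate. Everything outside the displayed inequality in the second paragraph is bookkeeping plus direct quotation of~\cite{rockafellar1976augmented,rockafellar1976ppa}.
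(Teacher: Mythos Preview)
Your proposal is correct and follows exactly the route the paper indicates: the paper does not give an independent proof but simply states that the theorem is obtained ``by combining Theorem~4 and Theorem~5 in~\cite{rockafellar1976augmented},'' i.e., by identifying the ALM with the inexact dual PPA and quoting Rockafellar. Your sketch supplies precisely the details behind that citation (the duality identity $\inf_x L_{\sigma_k}(x,y^k)=\max_{y'}\{g(y')-\tfrac{1}{2\sigma_k}\norm{y'-y^k}^2\}$, the key inequality $f_k(x^{k+1})-\inf f_k\ge\tfrac{1}{2\sigma_k}\norm{y^{k+1}-\bar y^{k+1}}^2$, and the translation of (A), (B) into Rockafellar's PPA criteria); one minor slip is that the inclusion driving the exact-step contraction should read $(y^k-\bar y^{k+1})/\sigma_k\in T_g(\bar y^{k+1})$ rather than $(y^k-y^{k+1})/\sigma_k$, but this does not affect the argument.
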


\begin{remark}
	\label{rmk-convergeppa}
	Note that the Lipschitz continuity assumption on $T_g^{-1}$ is rather restrictive, since it requires the solution set $T_g^{-1}(0)$ to be a singleton. In~\cite{luqueasymptotic1984}, Luque extended Rockafellar's original results by relaxing the Lipschitz continuity condition to the upper Lipschitz continuity condition. The latter condition is satisfied if the corresponding set-valued mapping is piecewise polyhedral (see Sun's PhD thesis~\cite{sunjiephd1986} for more discussions on these mappings).	However, in the present paper, we consider the mapping involving the non-polyhedral second-order cone; thus, more relaxed conditions might be needed.
\end{remark}

The classical convergence results for the ALM (or equivalently PPA) are of great value both theoretically and numerically. However, there are two practical issues to be resolved. First, we can only obtain the rate of convergence for the dual sequence $\{y^k\}$ generated by the ALM, but the rate of convergence for the primal sequence $ \{x^k\} $ is not known. Even though~\cite[Proposition 3]{CST2017} has provided a convergence result for $ \{x^k\} $ under the upper Lipschitz continuity condition of $T_l^{-1}$, the Lipschitz-like condition is quite restrictive as explained in \cite{CST2017}. Thus, instead of requiring the convergence of $\{x^k\}$ when designing a solver, in our opinion, a more reasonable requirement is the convergence of the KKT residual of the computed primal-dual sequence $\{(x^k,y^k)\}$. Second, the stopping criteria used in the theoretical analysis are not implementable since they require some unknown information (e.g., $\inf \,f_k$). Fortunately,  these issues are resolved in \cite{CST2017} by conducting finer analysis of the ALM applied to the dual problem. We shall summarize these results in the rest of this section.

To proceed, we first need the following definition of quadratic growth condition and assumption of Robinson constraint qualification.
\begin{definition}
	\label{def-quadgrowth}
	The quadratic growth condition holds at an optimal solution $\bar{y}\in {\rm SOL_D}$ if there exist positive constants $\kappa$ and $\epsilon$ such that
	\begin{eqnarray}
	-g^0(y)\geq -g^0(\bar{y}) + \kappa \,{\rm dist}^2(y,{\rm SOL_D})\quad \forall y\in \mathbb{B}_\epsilon(\bar{y})\cap \left\{y\in \R^n\;|\; Ay = b \right\}.
	\end{eqnarray}
\end{definition}

\begin{assumption}
	\label{assumpt-RCQ}
	The solution set $ {\rm SOL_{D}} $ for the problem $ {\rm (D)} $ is non-empty and the following Robinson constraint qualification (RCQ) of  the problem $ {\rm (D)} $ hold at some $\bar{y}\in  {\rm SOL_{D}} :$
	\begin{eqnarray*}
		0\in {\rm int}
		\left\{
		\begin{pmatrix}
			A\bar{y} - b\\ \bar{y}
		\end{pmatrix}
		+
		\begin{pmatrix}
			A \\ I_n
		\end{pmatrix}
		\R^n
		-
		\begin{pmatrix}
			\{0\}
			\\ \cK
		\end{pmatrix}
		\right\}.
	\end{eqnarray*}
\end{assumption}
By~\cite[Theorem 3.9]{bonnans2013perturbation}, the optimal solution set $ {\rm SOL_P} $ to the problem $ {\rm (P)} $ is nonempty and bounded under Assumption~\ref{assumpt-RCQ}.

For any $k\geq 0$, $y^k \in\R^n$, $x_1\in \R^n$ and $x_2\in \R^m$, denote
\begin{eqnarray}
\label{eq-ywse}
\left\{
\begin{aligned}
&\tilde{y}^k(x_1,x_2):=
\Pi_{\cK}\left[y^k+\sigma_k\big(-Hx_1+A^\top x_2-c\big)\right],\\
&\tilde{x}^k(x_1,x_2):=
\big(x_1,x_2, \tilde{y}^k(x_1,x_2)\big)^\top\in \X, \\
&e^k(x_1,x_2)\,:=
\begin{pmatrix}
Hx_1-H \tilde{y}^k(x_1, x_2)\\
-b+A \tilde{y}^k(x_1, x_2) \\
0
\end{pmatrix}.
\end{aligned}
\right.
\end{eqnarray}
Let $\{\hat{\epsilon}_k\}$ and $\{\hat{\delta}_k\}$ be two summable and nonnegative sequences. For inexact computations, we adopt the following stopping criteria:
\begin{eqnarray*}
	\begin{aligned}
		& {\rm (A')}\quad
		\norm{e^{k+1}} \leq
		\frac{\hat{\epsilon}_k^2/\sigma_k}{C_k}
		\min
		\left\{
		1,\frac{1}{\norm{Hy^{k+1}}+\norm{y^{k+1}-y^k}/\sigma_k+1/\sigma_k}
		\right\},    \\
		& {\rm (B')}\quad
		\norm{e^{k+1}} \leq
		\frac{(\hat{\delta}_k^2/\sigma_k)\norm{y^{k+1}-y^k}^2}{C_k}
		\min
		\left\{
		1,\frac{1}{\norm{Hy^{k+1}}+\norm{y^{k+1}-y^k}/\sigma_k+1/\sigma_k}
		\right\},
	\end{aligned}
\end{eqnarray*}
where $e^{k+1}:=e^k(x_1^{k+1},x_2^{k+1})$, $ y^{k+1}:=\tilde{y}^k(x_1^{k+1},x_2^{k+1})$, and $C_k\geq 1$ is defined as
\begin{equation*}
	C_k:=1+\norm{\tilde{x}^k(x_1^{k+1},x_2^{k+1})}+\norm{y^{k+1}}.
\end{equation*}
We can see that the above stopping criteria are truly implementable, and hence they are more useful for practical purposes than the classical ones (i.e., criteria ${\rm (A)}$ and ${\rm (B)}$).

Based on the KKT optimality condition~\eqref{kkt-condition}, we define the natural map
\begin{eqnarray}
\label{naturalmap}
R^{\rm nat}(x,y):=
\begin{pmatrix}
Hx_1-Hy\\
-b+Ay\\
x_3-\Pi_{\cK}(x_3-y)\\
Hx_1-A^\top x_2-x_3+c
\end{pmatrix}\quad
\forall\,
x = (x_1,x_2,x_3)\in \X,\;
y\in \R^n.
\end{eqnarray}
The following theorem is taken from \cite[Theorem 2]{CST2017}, which provides the R-superlinear convergence of the KKT residual.

\begin{theorem}
	\label{thm-rsuperlinear}
	Suppose that Assumption~\ref{assumpt-RCQ} holds.
	Let $\{(x^k,y^k)\}$ be an infinite sequence generated by the ALM in~\eqref{alg-alm} under the criterion ${\rm (A')}$. Then the sequence $\{y^k\}$ is bounded and converges to some $y^\infty\in{\rm SOL_D}$. Moreover, the sequence $\{x^k\}$ is also bounded with all of its limit points in ${\rm SOL_P}$.
	
	If criterion ${\rm (B')}$ is also executed in the ALM and the quadratic growth condition holds at $y^\infty$ with modulus $\hat{\kappa}_g>0$, then there exist a positive constant $\alpha$ and an integer $\bar{k}\geq 0$ such that for all $k\geq \bar{k}$, $\alpha \hat{\delta}_k<1$, and
	\begin{eqnarray*}
		{\rm dist}\,(y^{k+1},{\rm SOL_D})\leq
		\theta_k {\rm dist}\,(y^k,{\rm SOL_D}),\quad
		\norm{R^{\rm nat}(x^{k+1},y^{k+1})}\leq
		\theta_k'{\rm dist}\,(y^k,{\rm SOL_D}),
	\end{eqnarray*}
	where
	\begin{eqnarray*}
		\theta_k&:=&\frac{1}{1-\alpha \hat{\delta}_k}
		\left(
		\alpha \hat{\delta}_k + \frac{\alpha \hat{\delta}_k}{\sqrt{1+\sigma_k^2\hat{\kappa}_g^2}}
		\right),\\[5pt]
		\theta_k'&:=&\frac{1}{1-\alpha \hat{\delta}_k}
		\left(
		\max
		\left\{
		1, \frac{1}{\sigma_k}
		\right\} +
		\frac{\hat{\delta}_k^2}{\sigma_k}\norm{y^{k+1}-y^k}
		\right).
	\end{eqnarray*}
\end{theorem}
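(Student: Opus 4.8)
The plan is to reduce the statement to the known convergence theory of the proximal point algorithm (PPA) applied to the dual problem, exactly as in Rockafellar~\cite{rockafellar1976augmented,rockafellar1976ppa} and its refinement in~\cite{CST2017}. The starting point is the well-known identification of the inexact ALM iteration~\eqref{alg-alm} with an inexact PPA iteration $y^{k+1} \approx (I + \sigma_k T_g)^{-1}(y^k)$ for the maximal monotone operator $T_g = -\partial g$. The first task is to translate the implementable criteria ${\rm (A')}$ and ${\rm (B')}$, which are phrased in terms of the computable residual $e^{k+1}$ and the projection-based map $\tilde y^k(\cdot)$, into the abstract criteria ${\rm (A)}$ and ${\rm (B)}$ of Theorem~\ref{thm-convergeppaalm}. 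The key observation is that $x^{k+1}$ together with $y^{k+1} = \tilde y^k(x_1^{k+1},x_2^{k+1})$ is an \emph{exact} minimizer/maximizer pair for a \emph{perturbed} ALM subproblem, the perturbation being precisely $e^{k+1}$; hence one can bound $f_k(x^{k+1}) - \inf f_k$ by a quantity controlled by $\norm{e^{k+1}}$ times the normalizing factor $C_k$ and the extra term $\norm{Hy^{k+1}} + \norm{y^{k+1}-y^k}/\sigma_k + 1/\sigma_k$ appearing in the denominators of ${\rm (A')}$, ${\rm (B')}$. This is the technical heart of the first part and is where I expect the bookkeeping to be heaviest; it is carried out in detail in the proof of~\cite[Theorem~2]{CST2017} and I would follow that argument line by line, adapting the projection estimates to $\Pi_\cK$ via Lemma~\ref{lem-projsoc} and the nonexpansiveness of $\Pi_\cK$.

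Granting that reduction, the boundedness and convergence of $\{y^k\}$ to some $y^\infty \in {\rm SOL_D}$ follows from the first part of Theorem~\ref{thm-convergeppaalm} (which only needs ${\rm SOL_D}\neq\emptyset$, guaranteed by Assumption~\ref{assumpt-RCQ}). For the boundedness of $\{x^k\}$ and the claim that its limit points lie in ${\rm SOL_P}$: under Assumption~\ref{assumpt-RCQ}, by~\cite[Theorem~3.9]{bonnans2013perturbation} the primal solution set ${\rm SOL_P}$ is nonempty and bounded, and more is true — the RCQ forces a level-boundedness/coercivity type property on $f_k$ that, combined with $\norm{e^{k+1}}\to 0$ and $y^k \to y^\infty$, pins the near-optimal primal iterates $x^{k+1}$ near ${\rm SOL_P}$. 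Concretely, any limit point $x^\infty$ of $\{x^k\}$ satisfies the primal feasibility constraint in the limit (since $e^{k+1}\to 0$ and criterion ${\rm (A')}$ drives the augmented Lagrangian gradient to zero) and attains the optimal value, hence $x^\infty \in {\rm SOL_P}$; this is~\cite[Proposition~4.2 / Theorem~2]{CST2017} transcribed to our setting.

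For the linear-rate conclusions, the extra hypothesis is the quadratic growth condition at $y^\infty$ (Definition~\ref{def-quadgrowth}), which by the standard equivalence (Aubin property $\Leftrightarrow$ quadratic growth of the Moreau envelope, see~\cite{CST2017}) is equivalent to the \emph{calmness} / upper-Lipschitz continuity of $T_g^{-1}$ at the origin with some modulus governed by $\hat\kappa_g$. This is exactly the relaxation of Rockafellar's Lipschitz hypothesis described in Remark~\ref{rmk-convergeppa} (Luque's theorem), and it is the right condition because the non-polyhedral cone $\cK$ prevents us from assuming piecewise polyhedrality outright. Feeding the upper-Lipschitz estimate into the PPA contraction inequality — namely $\mathrm{dist}(y^{k+1}, T_g^{-1}(0)) \le \big(1-\alpha\hat\delta_k\big)^{-1}\big(\alpha\hat\delta_k + \alpha\hat\delta_k(1+\sigma_k^2\hat\kappa_g^2)^{-1/2}\big)\,\mathrm{dist}(y^k, T_g^{-1}(0))$ — yields the first displayed inequality with the stated $\theta_k$; the constant $\alpha$ and the threshold $\bar k$ absorb the "for $k$ sufficiently large" and the requirement $\alpha\hat\delta_k < 1$. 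Finally, the bound on $\norm{R^{\rm nat}(x^{k+1},y^{k+1})}$ is obtained by writing each block of $R^{\rm nat}$ in terms of the residual $e^{k+1}$ and the proximal step $y^{k+1}-y^k$: the first two blocks are exactly (components of) $e^{k+1}$; the feasibility block $Hx_1^{k+1}-A^\top x_2^{k+1}-x_3^{k+1}+c$ equals $(y^{k+1}-y^k)/\sigma_k$ by the dual update; and the complementarity block $x_3^{k+1}-\Pi_\cK(x_3^{k+1}-y^{k+1})$ is controlled using the nonexpansiveness of $\Pi_\cK$ together with the identity $y^{k+1}=\Pi_\cK[\,\cdot\,]$ from~\eqref{eq-ywse}. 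Collecting these and using criteria ${\rm (A')}$, ${\rm (B')}$ to bound $\norm{e^{k+1}}$ gives the second inequality with the stated $\theta_k'$. The main obstacle throughout is the first step — making the implementable criteria rigorously match the abstract ones while carrying the correct normalizing constants — since everything downstream is a transcription of~\cite{CST2017} once $\Pi_\cK$ is substituted for the generic projection and its strong semismoothness (Lemma~\ref{lem-semismooth}) and nonexpansiveness are invoked where needed.
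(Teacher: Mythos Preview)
Your proposal is correct and matches the paper's approach: the paper does not supply an independent proof of this theorem but simply states that it is taken from \cite[Theorem~2]{CST2017}, and your outline is precisely a sketch of that argument specialized to the present SOCP setting (reducing the implementable criteria ${\rm (A')}$, ${\rm (B')}$ to Rockafellar's abstract criteria ${\rm (A)}$, ${\rm (B)}$, invoking the PPA contraction estimate under the calmness/quadratic-growth equivalence, and then bounding the blocks of $R^{\rm nat}$ via the dual update and nonexpansiveness of $\Pi_\cK$). There is nothing further to compare.
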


\bigskip
One can observe from the above theorem that when $\sigma_k\uparrow\sigma_\infty\leq \infty $,
\begin{eqnarray*}
	\theta_k\rightarrow
	\theta_\infty:=\frac{1}{\sqrt{1+\sigma_\infty^2\hat{\kappa}_g^2}},\quad
	\theta_k'\rightarrow
	\theta_\infty':=\max\left\{ 1,\frac{1}{\sigma_\infty} \right\}.
\end{eqnarray*}
Thus $\theta_\infty$ can be arbitrarily close to zero if $\sigma_\infty$ is sufficiently large. This implies that the linear convergence rate for the sequence $ \{\mathrm{dist}(y^{k}, \mathrm{SOL}_D)\} $ can be arbitrarily small. Moreover, since $\theta_\infty'\leq 1$,  the KKT residual also converges as rapidly as $ \{\mathrm{dist}(y^{k}, \mathrm{SOL}_D)\} $. These convergence properties may explain partially the highly efficiency of the ALM, as we shall see in our numerical experiments.

\section{Quadratic growth condition}
\label{sec-quad-growth}
In this section, we analyze the quadratic growth condition for the dual problem $ {\rm (D)} $, which serves as a sufficient condition for the  KKT residual generated by the ALM  to achieve the R-superlinear convergence rate (see Theorem~\ref{thm-rsuperlinear}). In the recent work of Cui, Ding, and Zhao \cite{CDZhao2016}, two types of sufficient conditions were proposed to ensure the quadratic growth condition. Here in this section, we will follow one of the available frameworks in \cite{CDZhao2016} to provide a sufficient condition for the quadratic growth condition under the bounded linear regularity and metric subregularity conditions.

Recall that since $H\succeq \bm{0}$, there exists a matrix $R$ such that $ H = R^\top R $. Denote $F_{D}:=\big\{ y\in \R^n \;\left\vert\;  Ay = b \right. \big\}$. Then problem $ {\rm (D)} $ can be reformulated as
\begin{equation*}
\max_{y} \;
\left\{ \left.
g^0(y):=-\frac{1}{2}\norm{Ry}^2-\inprod{c}{y} - p(y) \;\right\vert\; y\in F_{D}
\right\},
\end{equation*}
where $p(\cdot)=\delta_{\cK}(\cdot)$.
Moreover, the KKT optimality condition~\eqref{kkt-condition} can be rewritten as
\begin{eqnarray}
\label{kkt-condition2}
0\in R^\top Ry + c+\partial p(y)-A^\top x_2,\quad
Ay-b = 0\quad
\forall \; (x_2,y) \in \R^m\times \R^n.
\end{eqnarray}
Take any $\bar{y}\in {\rm SOL_{D}}$.  Denote
\begin{equation*}
\bar{\zeta}:=R \bar{y},\quad
\overline{\mathcal{V}}:=\big\{ y\in \R^n \;|\; R y= \bar{\zeta} \big\}
\end{equation*}
and define the set-valued mapping $\mathcal{G}:\X\rightrightarrows\R^n$ as
\begin{equation*}
\mathcal{G}(x):=
(\partial p)^{-1}\left(A^\top x_2 - R^\top \bar{\zeta}-c\right)\quad
\forall \; x=(x_1,x_2,x_3) \in \X.
\end{equation*}
Then, we have the following characterization for the optimal solution set $ {\rm SOL_{D}} $.
\begin{proposition}
	\label{prop-SOLD2}
	Assume that $\bar{y}\in {\rm SOL_{D}}$ and $\bar{x}= (\bar{x}_1,\bar{x}_2,\bar{x}_3)\in {\rm SOL_P}$.
	Then the optimal solution set $ {\rm SOL_{D}} $ can be characterized as
	\begin{eqnarray*}
		{\rm SOL_{D}} =
		\overline{\mathcal{V}}\cap \mathcal{G}(\bar{x}) \cap F_D.
	\end{eqnarray*}
\end{proposition}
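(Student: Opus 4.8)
The strategy is to establish the equality ${\rm SOL_D} = \overline{\mathcal{V}}\cap\mathcal{G}(\bar{x})\cap F_D$ by double inclusion, exploiting the fact that the quadratic part $y\mapsto\frac{1}{2}\|Ry\|^2$ of $-g^0$ is convex and constant on $\overline{\mathcal{V}}$. First I would record the KKT characterization: by~\eqref{kkt-condition2}, a feasible point $y$ lies in ${\rm SOL_D}$ if and only if $Ay=b$ (i.e.\ $y\in F_D$) and there is a multiplier $x_2$ with $A^\top x_2 - R^\top R y - c \in \partial p(y)$, equivalently $y\in(\partial p)^{-1}(A^\top x_2 - R^\top Ry - c)$. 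The key observation is that this multiplier $x_2$ may be taken to be $\bar{x}_2$ and $Ry$ may be replaced by $\bar{\zeta}=R\bar{y}$ whenever $y\in\overline{\mathcal{V}}$, which is exactly what turns the condition into $y\in\mathcal{G}(\bar{x})$.

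For the inclusion ``$\subseteq$'': take $y\in{\rm SOL_D}$. Since both $\bar{y}$ and $y$ are optimal for the concave maximization of $g^0$ over $F_D$, and since $-g^0$ is the sum of the convex quadratic $\frac{1}{2}\|R\cdot\|^2$, the linear term $\inprod{c}{\cdot}$, and the convex $p$, a standard argument on optimal sets of convex functions (the objective is affine along the segment joining two minimizers, forcing each convex summand to be affine there, and strict convexity of $t\mapsto\frac{1}{2}t^2$ then forces $\|Ry\|=\|R\bar{y}\|$ and in fact $Ry=R\bar{y}$) gives $Ry=\bar{\zeta}$, i.e.\ $y\in\overline{\mathcal{V}}$. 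Clearly $y\in F_D$. It remains to show $y\in\mathcal{G}(\bar{x})$: from the KKT condition for the pair $(\bar{x},\bar{y})$ we have $A^\top\bar{x}_2 - R^\top R\bar{y} - c\in\partial p(\bar{y})$, and one checks using $Ry=R\bar{y}$ together with the optimality of $y$ that $A^\top\bar{x}_2 - R^\top\bar{\zeta} - c\in\partial p(y)$ as well; this is where I would argue that the subgradient inclusion ``transfers'' from $\bar{y}$ to $y$ because both are minimizers of the same convex function $y\mapsto\frac{1}{2}\|Ry\|^2+\inprod{c}{y}+p(y)-\inprod{A\bar{x}_2-b}{y}$ over $\R^n$ (the linear term $\inprod{A^\top\bar x_2}{y}-\inprod{b}{y}$ being constant in $y$ up to the affine constraint, and by complementary slackness it vanishes appropriately on $F_D$). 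Hence $y\in(\partial p)^{-1}(A^\top\bar{x}_2-R^\top\bar{\zeta}-c)=\mathcal{G}(\bar{x})$.

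For the reverse inclusion ``$\supseteq$'': take $y\in\overline{\mathcal{V}}\cap\mathcal{G}(\bar{x})\cap F_D$. Then $Ry=\bar{\zeta}=R\bar{y}$, so $R^\top Ry=R^\top R\bar{y}=H\bar{y}$, and the membership $y\in\mathcal{G}(\bar{x})$ means $A^\top\bar{x}_2-R^\top\bar{\zeta}-c\in\partial p(y)$, i.e.\ $A^\top\bar{x}_2 - R^\top Ry - c\in\partial p(y)$, which is precisely $0\in R^\top Ry + c + \partial p(y) - A^\top\bar{x}_2$; combined with $y\in F_D$ (so $Ay-b=0$), the pair $(\bar{x}_2,y)$ satisfies~\eqref{kkt-condition2}, whence $y\in{\rm SOL_D}$. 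I expect the main obstacle to be the careful justification in the ``$\subseteq$'' direction that the subgradient inclusion passes from $\bar{y}$ to an arbitrary optimal $y$ — that is, that one may use the \emph{same} multiplier $\bar{x}_2$ and the \emph{same} vector $\bar{\zeta}$ for every $y\in{\rm SOL_D}$; this rests on the constancy of $Ry$ over ${\rm SOL_D}$ (which must itself be proven first) and on a convexity argument identifying ${\rm SOL_D}$ with the set of minimizers of a single fixed convex function once that constancy is known. The rest is routine manipulation of the KKT system~\eqref{kkt-condition2}.
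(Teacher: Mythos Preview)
Your proposal is correct and follows essentially the same route as the paper. The paper's own proof is extremely terse: it observes that the only nontrivial point is the invariance of $Ry$ over ${\rm SOL_D}$ and simply cites Mangasarian~\cite{Mangasarian1988} for this fact, leaving the double inclusion implicit. You instead sketch the convexity argument yielding $Ry=R\bar y$ directly and then carry out both inclusions explicitly, which is the same idea with more detail supplied.

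One simplification is available in your ``$\subseteq$'' direction. Your ``subgradient transfer'' argument (both $\bar y$ and $y$ minimize the same auxiliary convex function) works, but it is more direct to invoke the fact already recorded in the paper just after~\eqref{kkt-condition}: under the standing assumption that a KKT point exists, every pair $(\bar x,y)$ with $\bar x\in{\rm SOL_P}$ and $y\in{\rm SOL_D}$ satisfies~\eqref{kkt-condition} and hence~\eqref{kkt-condition2}. This immediately gives $A^\top\bar x_2 - R^\top Ry - c\in\partial p(y)$, and then $Ry=\bar\zeta$ (once established) yields $y\in\mathcal G(\bar x)$ without any further argument about minimizers of an auxiliary function.
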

\begin{proof}
	We only have to show that for any $y, y'\in {\rm SOL_D}$, it holds that $Ry = Ry'$. This is equivalent to saying that the value $Ry$ is invariant over $y\in {\rm SOL_D}$. However, such a fact is already well known in the literature; see, for instance~\cite{Mangasarian1988}.
\end{proof}

Next, we recall the concept of bounded linear regularity of a collection of closed convex sets. This concept is useful for analyzing error bound properties for constrained optimization problems.
\begin{definition}
	\label{def-bddlinreg}
	Let $D_1,\cdots,D_q$ be some closed convex sets in a finite dimensional Euclidean space $ \W $.
	Suppose that $D:=D_1\cap \cdots \cap D_q$ is non-empty.
	The collection $\{ D_1,\cdots,D_q \}$ is said to be boundedly linearly regular if for every bounded set $B\subseteq \W$, there exists a positive constant $\kappa$ such that
	\begin{eqnarray*}
		{\rm dist}\,(x,D) \; \leq \;
		\kappa \max
		\left\{ {\rm dist}\,(x,D_1),\cdots,{\rm dist}\,(x,D_q) \right\} \quad
		\forall \, x\in B.
	\end{eqnarray*}
\end{definition}
However, checking the condition in Definition \ref{def-bddlinreg} is not a trivial task. In~\cite[Corollary 3]{bauschke1999strong}, the authors established the following simpler sufficient condition.
\begin{proposition}
	\label{prop-bddlinreg}
	Let $D_1,\cdots,D_q$ be some closed convex sets in a finite dimensional Euclidean space $ \W $. Suppose that $D_1,\cdots,D_{q_1}$ are polyhedral for some $0\leq q_1\leq q$. Then a sufficient condition for the collection $\{ D_1,\cdots,D_q \}$ to be boundedly linearly regular is
	\begin{eqnarray*}
		\bigcap_{1\leq i\leq q_1}\,D_i
		\;\;\cap\;\;
		\bigcap_{q_1+1\leq i\leq q}\,{\rm ri}\,(D_i)\;
		\neq \;\emptyset.
	\end{eqnarray*}
\end{proposition}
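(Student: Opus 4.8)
The statement is \cite[Corollary~3]{bauschke1999strong}; I would reproduce its proof as follows. The whole argument rests on one two-set lemma, which I would isolate and prove first: \emph{if $P\subseteq\W$ is a polyhedral convex set and $C\subseteq\W$ is a closed convex set with $P\cap{\rm ri}\,(C)\neq\emptyset$, then $\{P,C\}$ is boundedly linearly regular.} Granting this, the proposition follows by bookkeeping. Pick $\bar z\in\bigcap_{1\le i\le q_1}D_i\cap\bigcap_{q_1+1\le i\le q}{\rm ri}\,(D_i)$, and set $C_{\rm poly}:=\bigcap_{i\le q_1}D_i$, $C_{\rm gen}:=\bigcap_{i>q_1}D_i$. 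By Rockafellar's intersection formula \cite[Theorem~6.5]{rockafellar1970convex} (applicable since $\bigcap_{i>q_1}{\rm ri}\,(D_i)\neq\emptyset$), ${\rm ri}\,(C_{\rm gen})=\bigcap_{i>q_1}{\rm ri}\,(D_i)$, so $\bar z\in C_{\rm poly}\cap{\rm ri}\,(C_{\rm gen})$. Now (i) $\{D_1,\dots,D_{q_1}\}$ is \emph{globally} linearly regular: $C_{\rm poly}$ is polyhedral, and the classical error bound of Hoffman, together with the fact that the residual of $x$ against the defining system of each $D_i$ is dominated by ${\rm dist}\,(x,D_i)$, gives ${\rm dist}\,(x,C_{\rm poly})\le\kappa_0\max_{i\le q_1}{\rm dist}\,(x,D_i)$ for all $x$; (ii) $\{D_{q_1+1},\dots,D_q\}$ is boundedly linearly regular, by the standard diagonal device—$\{D_{q_1+1},\dots,D_q\}$ is boundedly linearly regular iff $\{\prod_{i>q_1}D_i,\ \Delta\}$ is (with $\Delta$ the diagonal subspace), and $(\bar z,\dots,\bar z)\in\Delta\cap{\rm ri}\,(\prod_{i>q_1}D_i)$, so the two-set lemma applies with the polyhedral set $\Delta$; (iii) $\{C_{\rm poly},C_{\rm gen}\}$ is boundedly linearly regular directly by the two-set lemma. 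Composing (iii) with (i) and (ii) gives, on any bounded set, ${\rm dist}\,(x,D)={\rm dist}\,(x,C_{\rm poly}\cap C_{\rm gen})\le\kappa_2\max\{{\rm dist}\,(x,C_{\rm poly}),{\rm dist}\,(x,C_{\rm gen})\}\le\kappa_2\max\{\kappa_0,\kappa_3\}\max_{1\le i\le q}{\rm dist}\,(x,D_i)$, which is the assertion (with $D=\bigcap_i D_i\ni\bar z$ nonempty).

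For the two-set lemma, the relative-interior hypothesis is exactly what does the work. Translating, assume $0\in P\cap{\rm ri}\,(C)$, so that $L:={\rm aff}\,(C)$ is a linear subspace and there is $\rho>0$ with $\{u\in L:\norm{u}\le\rho\}\subseteq C$. Fix a bounded set $B$, say $0\in B\subseteq\mathbb{B}_M(0)$; for $x\in B$ put $d:=\max\{{\rm dist}\,(x,P),{\rm dist}\,(x,C)\}$ (note $d\le M$). Since $C\subseteq L$ we have ${\rm dist}\,(x,L)\le d$, so Hoffman applied to the polyhedral set $P\cap L$ produces $q\in P\cap L$ with $\norm{x-q}\le\kappa_1 d$, whence ${\rm dist}\,(q,C)\le\norm{q-x}+{\rm dist}\,(x,C)\le(\kappa_1+1)d$. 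Working inside $L$, a supporting-hyperplane argument from $\{u\in L:\norm{u}\le\rho\}\subseteq C$ shows that the Minkowski gauge $\gamma(z):=\inf\{t>0:z/t\in C\}$ satisfies ${\rm dist}\,(z,C)\ge\rho\,(\gamma(z)-1)$ for every $z\in L$; hence $\gamma(q)\le 1+(\kappa_1+1)d/\rho$. Contracting $q$ toward $0$, take the smallest $t^\ast\in[0,1)$ with $(1-t^\ast)\gamma(q)\le 1$ (so $t^\ast\le(\kappa_1+1)d/\rho$) and put $w:=(1-t^\ast)q$: then $\gamma(w)\le 1$, so $w\in C$, while $w\in[0,q]\subseteq P$ by convexity (as $0,q\in P$) and $w\in L$, so $w\in P\cap C$; moreover $\norm{q}\le(1+\kappa_1)M$ on $B$, so $\norm{q-w}=t^\ast\norm{q}\le(\kappa_1+1)(1+\kappa_1)Md/\rho$. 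Therefore ${\rm dist}\,(x,P\cap C)\le\norm{x-q}+\norm{q-w}\le\big(\kappa_1+(1+\kappa_1)^2M/\rho\big)d$ uniformly over $x\in B$, which is bounded linear regularity of $\{P,C\}$ on $B$.

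The crux—and the reason this is more than Hoffman's lemma—is the passage from a common point of the \emph{relative} interiors rather than a common interior point, which blocks a direct Slater-type argument in the ambient space: a naive contraction of $q$ toward the common point could simultaneously leave $C$ and leave $P$. The resolution is the two-stage split above: one first restricts to ${\rm aff}\,(C)$, where ${\rm ri}\,(C)$ becomes a genuine interior so that the gauge estimate is available, paying for the move into $P\cap{\rm aff}\,(C)$ with Hoffman's bound for that polyhedral set; one then contracts toward the common point $0$, which remains in $P$ precisely because $0\in P$ is part of the hypothesis and $P$ is convex. The remaining points are routine and I would only verify them carefully: the diagonal equivalence for bounded linear regularity (project onto the diagonal by coordinate averaging, a $1$-Lipschitz map, and compare residuals coordinatewise), Rockafellar's formula ${\rm ri}\,(\bigcap_i D_i)=\bigcap_i{\rm ri}\,(D_i)$ under $\bigcap_i{\rm ri}\,(D_i)\neq\emptyset$, and the use of Hoffman's bound in the form permitting equality constraints so that it covers subspaces such as $L$ and $\Delta$.
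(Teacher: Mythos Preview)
The paper does not prove this proposition at all; it merely records it and attributes it to \cite[Corollary~3]{bauschke1999strong}. Your proposal correctly identifies the source and gives a faithful, correct sketch of the Bauschke--Borwein--Li argument (the two-set polyhedral/relative-interior lemma, the diagonal reduction for the non-polyhedral block, Hoffman's bound for the polyhedral block, and the composition), so there is nothing to compare---you have supplied what the paper only cites.
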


We next introduce the definition of metric subregularity.
\begin{definition}
	\label{def-metricsubregular}
	A multifunction $\Phi:\W \rightrightarrows \Y $ is said to be metrically subregular at $\bar{x}\in \W$ for $\bar{v}\in \Y$ if $(x,v)\in {\rm gph}\,(\Phi)$ and there exist positive constants $\kappa$ and $\epsilon$ such that
	\begin{eqnarray*}
		{\rm dist}\,(x,\Phi^{-1}(\bar{v}))\leq
		\kappa\, {\rm dist}\,(\bar{v},\Phi(x))\quad
		\forall\, x\in \mathbb{B}_{\epsilon}(\bar{x}).
	\end{eqnarray*}
\end{definition}
For a general multifunction, it could be difficult to check the metric subregularity directly since the graph of the multifunction at the reference point may contain infinitely many points. Fortunately, when the multifunction is the subdifferential of a proper closed convex function, it has a more convenient characterization as shown in the next proposition.

\begin{proposition}
	\label{prop-metricsubregular}
	Let $\W$ be a real Hilbert space endowed with the inner product $\inprod{\cdot}{\cdot}$ and $p:\W\rightarrow (-\infty,+\infty]$ be a proper closed convex function. Let $(\bar{v},\bar{x})\in \W\times \W$ such that $\bar{v}\in \partial p(\bar{x})$. Then $\partial p$ is metrically subregular at $\bar{x}$ for $\bar{v}$ if and only if there exist positive constants $\kappa $ and $\epsilon$ such that
	\begin{eqnarray*}
		p(x) \geq p(\bar{x}) + \inprod{\bar{v}}{x-\bar{x}} + \kappa\, {\rm dist}^2\,(x,(\partial p)^{-1}(\bar{v}))\quad
		\forall\, x\in \mathbb{B}_\epsilon(\bar{x}).
	\end{eqnarray*}
\end{proposition}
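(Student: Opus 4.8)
The plan is to prove the equivalence between the metric subregularity of $\partial p$ at $\bar{x}$ for $\bar{v}$ and the quadratic growth of the ``Bregman-type'' function $x\mapsto p(x)-p(\bar{x})-\inprod{\bar{v}}{x-\bar{x}}$ near $\bar{x}$, and this is really a known result (see, e.g., Artacho--Geoffroy, or Aragón Artacho and Mordukhovich) that we can reconstruct directly. First I would set $h(x):=p(x)-p(\bar{x})-\inprod{\bar{v}}{x-\bar{x}}$. Since $p$ is proper closed convex and $\bar{v}\in\partial p(\bar{x})$, the function $h$ is also proper closed convex, nonnegative, with $h(\bar{x})=0$ and $0\in\partial h(\bar{x})$; moreover $\partial h(x)=\partial p(x)-\bar{v}$, so $(\partial h)^{-1}(0)=(\partial p)^{-1}(\bar{v})=\mathop{\rm argmin} h=: S$. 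Under this reduction, metric subregularity of $\partial p$ at $\bar{x}$ for $\bar{v}$ is exactly metric subregularity of $\partial h$ at $\bar{x}$ for $0$, i.e. the existence of $\kappa_1,\epsilon_1>0$ with ${\rm dist}(x,S)\le \kappa_1\,{\rm dist}(0,\partial h(x))$ for $x\in\mathbb{B}_{\epsilon_1}(\bar{x})$; and the claimed inequality is exactly the quadratic growth $h(x)\ge \kappa\,{\rm dist}^2(x,S)$ near $\bar{x}$. So the task is the equivalence of these two properties for a proper closed convex $h\ge 0$ with $\min h=0$.

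For the direction ``metric subregularity $\Rightarrow$ quadratic growth'', the key step is a one-dimensional reduction along the function $\phi(t):={\rm dist}(\bar{x}+t\,u,\dots)$ — more precisely, fix $x$ near $\bar{x}$, let $\bar{x}_0:=\Pi_S(x)$ (using that $S$ is closed convex), consider the segment $x_s:=\bar{x}_0+s(x-\bar{x}_0)$, $s\in[0,1]$, and the scalar function $\psi(s):=h(x_s)$. Then $\psi$ is convex, nonnegative, $\psi(0)=0$, and for a.e. $s$ one has $\psi'(s)=\inprod{g_s}{x-\bar{x}_0}$ for some $g_s\in\partial h(x_s)$; since $\|x-\bar{x}_0\|={\rm dist}(x,S)$ and since $x_s$ stays in a neighborhood of $\bar{x}$ (shrinking $\epsilon$ and noting $\bar{x}_0\to\bar{x}$ as $x\to\bar{x}$, or more carefully using boundedness of $S$-projections near $\bar{x}$), metric subregularity gives ${\rm dist}(x_s,S)\le\kappa_1\|g_s\|$, and also ${\rm dist}(x_s,S)\ge$ (something proportional to $s\,{\rm dist}(x,S)$ when $\bar{x}_0$ is the projection, because $\bar{x}_0$ is the closest point of $S$ to $x$ — this monotonicity along the segment is the point that needs a short geometric argument). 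Hence $\psi'(s)\ge \|g_s\|\cdot{\rm dist}(x_s,S)/\dots \ge c\,s\,{\rm dist}^2(x,S)$, and integrating from $0$ to $1$ yields $h(x)=\psi(1)\ge \tfrac{c}{2}\,{\rm dist}^2(x,S)$. I expect the main obstacle to be exactly this monotonicity/lower bound ${\rm dist}(x_s,S)\gtrsim s\,{\rm dist}(x,S)$ together with keeping the whole segment inside the neighborhood on which subregularity holds; one clean way is to first reduce to $x$ with ${\rm dist}(x,S)$ small, observe $\|x-\bar{x}\|$ is then controlled, and note that since $\bar{x}_0=\Pi_S(x)$ we have for convex $S$ that $s\mapsto{\rm dist}(x_s,S)$ is nondecreasing and in fact ${\rm dist}(x_s,S)=s\,\|x-\bar{x}_0\|$ is false in general but ${\rm dist}(x_s,S)\ge s\|x-\bar{x}_0\|-(1-s)\cdot 0$ can be salvaged from $\|x_s-\bar{x}_0\|=s\|x-\bar{x}_0\|$ and a supporting-hyperplane argument at $\bar{x}_0$.

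For the converse ``quadratic growth $\Rightarrow$ metric subregularity'', the argument is shorter: take $x\in\mathbb{B}_\epsilon(\bar{x})$ and any $g\in\partial h(x)$ (if $\partial h(x)=\emptyset$ there is nothing to prove, and if $0\in\partial h(x)$ then $x\in S$ and both sides vanish). By convexity, for $\bar{x}_0:=\Pi_S(x)$ we have $h(\bar{x}_0)\ge h(x)+\inprod{g}{\bar{x}_0-x}$, i.e. $0=h(\bar{x}_0)\ge h(x)-\|g\|\,{\rm dist}(x,S)$, so $h(x)\le\|g\|\,{\rm dist}(x,S)$; combining with $h(x)\ge\kappa\,{\rm dist}^2(x,S)$ gives $\kappa\,{\rm dist}^2(x,S)\le\|g\|\,{\rm dist}(x,S)$, hence ${\rm dist}(x,S)\le\kappa^{-1}\|g\|={\rm dist}(0,\partial h(x))/\kappa$ after minimizing over $g$ (noting $\partial h(x)$ is closed so the infimum defining ${\rm dist}(0,\partial h(x))$ is attained or approached), which is precisely metric subregularity of $\partial h$ at $\bar{x}$ for $0$ with modulus $\kappa^{-1}$. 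Finally, unwinding the substitution $h(x)=p(x)-p(\bar{x})-\inprod{\bar{v}}{x-\bar{x}}$ and $\partial h(x)=\partial p(x)-\bar{v}$ translates both statements back into the form stated in the proposition, completing the proof. I would also remark that the whole argument is local, so the boundedness of $S$ is not needed, only that $S$ is nonempty closed convex — which holds since $\bar{x}\in S$.
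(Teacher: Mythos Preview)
The paper does not prove this proposition itself; it simply cites \cite[Theorem~3.3]{aragon2008characterization}. Your reduction to $h(x):=p(x)-p(\bar x)-\inprod{\bar v}{x-\bar x}$ is correct, and your proof of the implication ``quadratic growth $\Rightarrow$ metric subregularity'' is complete and clean.

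The other implication, however, has a genuine gap. In fact, since $\bar x_0=\Pi_S(x)$ one also has $\bar x_0=\Pi_S(x_s)$ for every $s\in[0,1]$, so ${\rm dist}(x_s,S)=s\,{\rm dist}(x,S)$ \emph{exactly}; the obstacle you anticipated is therefore not one. The real problem is the step ``hence $\psi'(s)\ge \norm{g_s}\cdot{\rm dist}(x_s,S)/\dots$'': all you have is $\psi'(s)=\inprod{g_s}{x-\bar x_0}$ together with the norm bound $\norm{g_s}\ge\kappa_1^{-1}s\,{\rm dist}(x,S)$, and there is no control on the angle between $g_s$ and $x-\bar x_0$. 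Concretely, for $h(x_1,x_2)=\tfrac12 x_1^2+\tfrac{M}{2}x_2^2$, $S=\{0\}$, $x=(1,\delta)$, one gets $g_s=(s,Ms\delta)$, hence $\psi'(s)=s(1+M\delta^2)$ while $\norm{g_s}\,\norm{x-\bar x_0}\sim sM\delta$; choosing $M=\delta^{-3/2}$, $\delta\downarrow 0$, the ratio $\psi'(s)/(\norm{g_s}\,\norm{x-\bar x_0})$ tends to $0$, yet metric subregularity holds globally with modulus $1$. So the route through $\norm{g_s}$ does not deliver the claimed lower bound on $\psi'(s)$, and the integration argument as written breaks down. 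A standard correct argument for this direction replaces the raw subgradient along the segment by a single proximal step: set $y:={\rm arg}\min_{z}\big\{h(z)+\tfrac{1}{2\kappa_1}\norm{z-x}^2\big\}$, so that $(x-y)/\kappa_1\in\partial h(y)$ and $\norm{x-y}\le{\rm dist}(x,S)$; metric subregularity at $y$ together with the triangle inequality yields $\norm{x-y}\ge\tfrac12\,{\rm dist}(x,S)$, and then the subgradient inequality gives $h(x)\ge h(y)+\kappa_1^{-1}\norm{x-y}^2\ge(4\kappa_1)^{-1}{\rm dist}^2(x,S)$.
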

The proof of Proposition~\ref{prop-metricsubregular} can be found in~\cite[Theorem 3.3]{aragon2008characterization}. Next proposition states that $\partial p(\cdot)=\partial \delta_\cK(\cdot) = \mathcal{N}_{\mathcal{K}}(\cdot)$ is indeed metrically subregular.

\begin{proposition}
	\label{prop-indicator-K}
	Let $\cK = \cK^{n_1}\times\cdots\times \cK^{n_r}\subseteq \R^n$ be the Cartesian product of some second-order cones with $\cK^{n_i}\subseteq \R^{n_i},\;i = 1,\cdots,r$, and $ n = n_1+\cdots+n_r$. For any $(x,v)\in {\rm gph}(\partial \delta_\cK)$ i.e., {$v\in \cN_{\cK}(x)$}, $\partial \delta_\cK(\cdot)$ is metrically subregular at $x$ for $v$.
\end{proposition}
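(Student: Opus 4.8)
The plan is to reduce the assertion to the polyhedral structure of the second-order cone's normal cone map and then invoke Proposition~\ref{prop-metricsubregular}. Since $\cK$ is a Cartesian product and the distance functions, subdifferentials, and metric subregularity all decompose blockwise (using that $\cN_\cK(x) = \cN_{\cK^{n_1}}(x_1)\times\cdots\times\cN_{\cK^{n_r}}(x_r)$ and that the constant $\kappa$ can be taken as the maximum over the blocks), it suffices to prove the result for a single second-order cone $K\subseteq\R^d$. So fix $(\bar x,\bar v)$ with $\bar v\in\cN_K(\bar x)$; by Proposition~\ref{prop-metricsubregular} (applied with $p=\delta_K$, so $(\partial p)^{-1}(\bar v) = \cN_K^{-1}(\bar v)$) it is enough to exhibit $\kappa,\epsilon>0$ with
\[
\delta_K(x) \;\geq\; \delta_K(\bar x) + \inprod{\bar v}{x-\bar x} + \kappa\,{\rm dist}^2\!\big(x,\cN_K^{-1}(\bar v)\big)\qquad \forall\, x\in\mathbb{B}_\epsilon(\bar x).
\]
For $x\notin K$ the left side is $+\infty$ and there is nothing to prove, so the real content is the local error bound for points $x\in K$ near $\bar x$.

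The key observation is that the complementarity relation $K\ni x\perp v\in K^\circ$ (equivalently $v\in\cN_K(x)$) decomposes the boundary of $K$ into finitely many "faces" according to which of the mutually exclusive alternatives $x\in{\rm int}\,K$, $x=0$, or $x\in\partial K\setminus\{0\}$ holds, and similarly for $v$. Concretely, I would split into cases on $\bar x$: (i) if $\bar x\in{\rm int}\,K$ then $\bar v=0$, $\cN_K^{-1}(0) = K$, and for $x$ near $\bar x$ we have $x\in{\rm int}\,K$ and ${\rm dist}(x,K)=0$, so the inequality is trivial; (ii) if $\bar x=0$, then $\bar v\in K^\circ$ (note $K^\circ=-K$); (iii) if $\bar x\in\partial K\setminus\{0\}$, then either $\bar v=0$ or $\bar v$ lies on the single ray $\R_+\cdot(\bar x_0,-\bar x_t)$ (the normal cone at a smooth boundary point of the ice-cream cone is one-dimensional). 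In each case $\cN_K^{-1}(\bar v) = \{x\in K : \inprod{x}{\bar v}=0\}$ is the intersection of $K$ with a linear subspace, hence a polyhedral-looking object near $\bar x$; more precisely, locally around $\bar x$ it coincides with the intersection of $K$ with an affine subspace, and in fact it is itself a closed convex cone that is locally polyhedral at $\bar x$ after a suitable reduction. One then gets the quadratic (indeed linear) error bound from a Hoffman-type / metric-subregularity argument for $K$ restricted to the relevant affine slice, together with the elementary fact that the distance from a point of $K$ near $\bar x$ to the subspace $\{\inprod{\cdot}{\bar v}=0\}$ is comparable to $\inprod{x}{\bar v}$, which by complementarity equals the "slack" $\inprod{\bar v}{x-\bar x}$ appearing on the right-hand side.

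An alternative and perhaps cleaner route, which I would mention, is to appeal directly to the known fact that the normal-cone map of a second-order cone — being the subdifferential of the indicator of $\cK$, a pointed convex cone whose faces are all either the origin, the whole space, or closed half-lines — is a \emph{piecewise polyhedral} (i.e. semistable in the sense that its graph is a finite union of polyhedral sets) multifunction; this is essentially because the graph $\{(x,v): x\in\cK,\ v\in\cK^\circ,\ \inprod{x}{v}=0\}$ of $\cN_\cK$, while not polyhedral globally, is a finite union of polyhedral convex cones near any of its points once one fixes the "active configuration" of each block (whether $x_i$ is interior, zero, or on the smooth boundary). Robinson's theorem then gives that $\cN_\cK$ is locally upper Lipschitz, hence metrically subregular, at every point of its graph. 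I would verify the finitely-many-configurations claim explicitly using Proposition~\ref{prop-projsoc}'s case analysis, and then conclude.

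The main obstacle is the case $\bar x\in\partial K\setminus\{0\}$ with $\bar v\neq 0$ together with $x\in K$ approaching $\bar x$ tangentially: here $\cN_K^{-1}(\bar v) = \{x\in K:\inprod{x}{\bar v}=0\}$ is a \emph{lower-dimensional} face of $K$ (a ray, when $d\geq 3$), and one must show that for $x\in K$ near $\bar x$ the squared distance to this ray is controlled by the linear quantity $\inprod{\bar v}{x-\bar x}=\inprod{\bar v}{x}\geq 0$. The subtlety is that $K$ is curved, so a point $x\in\partial K$ close to $\bar x$ need not be close to the ray $\R_+\bar x$ even though $\inprod{x}{\bar v}$ is small of the same order as $\|x-\bar x\|$; one has to use the geometry of the Lorentz cone (the quadratic relation $x_0=\|x_t\|$) to see that the transverse displacement within $\partial K$ is only of order $\sqrt{\inprod{x}{\bar v}}\cdot\|x-\bar x\|^{1/2}$-type or better — precisely the place where the \emph{quadratic} (rather than linear) growth in Proposition~\ref{prop-metricsubregular} is exactly what is needed and what one can afford. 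Making this comparison quantitative and uniform in a neighborhood of $\bar x$ is the crux, and I expect to handle it by a direct computation in the $(x_0,x_t)$ coordinates after normalizing $\bar x$, or by the polyhedral-reduction argument above which bypasses the curvature by passing to the correct affine slice.
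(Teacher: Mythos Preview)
Your blockwise reduction to a single cone $K\subseteq\R^d$ matches the paper's first step. After that, however, the paper does not argue directly: it simply invokes Sun, Pan, and Bi~\cite{sunying2019}, who establish metric subregularity of the normal-cone map for the general $p$-order conic constraint system (the Lorentz cone being $p=2$). So the paper's proof for a single cone is a citation, whereas you propose to prove it from scratch.

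Your primary route --- reducing via Proposition~\ref{prop-metricsubregular} to the inequality $-\inprod{\bar v}{x}\ge\kappa\,{\rm dist}^2\big(x,\cN_K^{-1}(\bar v)\big)$ for $x\in K$ near $\bar x$, and then splitting on the position of $(\bar x,\bar v)$ --- is a genuine alternative and is workable. In the crux case you isolate ($\bar x\in\partial K\setminus\{0\}$, $\bar v\neq 0$, so $\bar v=\mu(-1,w)$ with $\|w\|=1$ and $\cN_K^{-1}(\bar v)=\R_+(1,w)$), a direct computation in the coordinates $a:=x_0-\inprod{x_t}{w}$, $b:=x_0+\inprod{x_t}{w}$, $c:=\|x_t-\inprod{x_t}{w}w\|$ gives ${\rm dist}^2\big(x,\R_+(1,w)\big)=a^2/2+c^2$, while the cone constraint $x_0\ge\|x_t\|$ is exactly $ab\ge c^2$; hence ${\rm dist}^2\le a(a/2+b)$, and since $-\inprod{\bar v}{x}=\mu a$ with $a/2+b$ bounded near $\bar x$, the desired quadratic bound follows. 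So your instinct about where the curvature enters, and why the \emph{quadratic} (not linear) growth in Proposition~\ref{prop-metricsubregular} is exactly what one can afford, is correct.

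Your ``alternative and perhaps cleaner route'', however, does not work as stated. The normal-cone map $\cN_K$ of a Lorentz cone with $d\ge 3$ is \emph{not} piecewise polyhedral: its graph contains, for instance, the set $\{(s(1,u),\,\mu(-1,u)):s,\mu>0,\ \|u\|=1\}$, a smooth $d$-dimensional manifold curved along the sphere $S^{d-2}$, which is not a finite union of polyhedral pieces even locally. (Equivalently: $K$ has uncountably many extreme rays, so the ``finitely many active configurations'' heuristic breaks down.) Robinson's upper-Lipschitz theorem for polyhedral multifunctions therefore does not apply. If you want a structural shortcut in place of the explicit estimate, the relevant notion is $C^2$-cone reducibility of $K$ (Bonnans--Shapiro), not polyhedrality; but that is a different argument from the one you sketched, and in any event your direct computation already does the job.
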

\begin{proof}
	Since the metric subregularity of $\mathcal{N}_{\mathcal{K}}$ is implied by the metric subregularity of each $\mathcal{N}_{\mathcal{K}^{n_i}}$ for $i = 1,\dots, r$, we only need to check that for a standard second-order cone $K$, $\mathcal{N}_K(\cdot)$ is metrically subregular at any point on its graph. The latter has been shown in \cite{sunying2019} as a special case of the results for the $p$-order conic constraint system. Thus, the proof is completed.
\end{proof}

After all the previous preparations, we are now able to provide a sufficient condition for the quadratic growth condition for problem $ {\rm (D)} $ to hold. The next theorem, which is taken from~\cite{CDZhao2016}, provides a general framework to establish the sufficient condition for the quadratic growth condition. To make the paper self-contained and to explain the idea more clearly, we provide a proof that is restricted to SOCPs.

\begin{theorem}
	\label{thm-sufficient}
	Assume that $ {\rm SOL_{D}} $ is nonempty and that there exists $\bar{x}\in {\rm SOL_P}$ such that the collection $\big\{\overline{\mathcal{V}},\mathcal{G}(\bar{x})\big\}$ is boundedly linearly regular. Then the quadratic growth condition holds for problem $ {\rm (D)}$ at any point $\bar{y}\in {\rm SOL_{D}}$.
\end{theorem}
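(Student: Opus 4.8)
The plan is to lower-bound the dual growth gap $-g^0(y)+g^0(\bar{y})$ via a three-term decomposition, following the general recipe of Cui, Ding, and Zhao~\cite{CDZhao2016} but specializing every ingredient to the second-order cone. Write $-g^0(y)=\tfrac12\norm{Ry}^2+\inprod{c}{y}+p(y)$ with $p=\delta_{\cK}$, and fix $\bar{x}=(\bar{x}_1,\bar{x}_2,\bar{x}_3)\in{\rm SOL_P}$ and $\bar{y}\in{\rm SOL_D}$. First I would read off from the KKT system~\eqref{kkt-condition} the identities $H\bar{x}_1=H\bar{y}$, $\bar{v}:=A^\top\bar{x}_2-R^\top\bar{\zeta}-c=-\bar{x}_3$ (with $\bar{\zeta}=R\bar{y}$), $\inprod{\bar{x}_3}{\bar{y}}=0$, and $\bar{v}\in N_{\cK}(\bar{y})=\partial p(\bar{y})$; in particular $\bar{y}\in(\partial p)^{-1}(\bar{v})=\mathcal{G}(\bar{x})$, in agreement with Proposition~\ref{prop-SOLD2}. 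These identities pin down the gradient $R^\top\bar{\zeta}+c$ of the smooth part of $-g^0$ at $\bar{y}$ and a subgradient $\bar{v}\in\partial p(\bar{y})$.

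Next I would write
\[
-g^0(y)+g^0(\bar{y})=\Big[\tfrac12\norm{Ry}^2+\inprod{c}{y}-\tfrac12\norm{R\bar{y}}^2-\inprod{c}{\bar{y}}\Big]+\big[p(y)-p(\bar{y})\big].
\]
The first bracket is a convex quadratic, hence it expands \emph{exactly} to $\inprod{R^\top\bar{\zeta}+c}{y-\bar{y}}+\tfrac12\norm{R(y-\bar{y})}^2$. For the second bracket I would invoke the metric subregularity of $\partial p=N_{\cK}$ at $\bar{y}$ for $\bar{v}$ (Proposition~\ref{prop-indicator-K}) together with its convex characterization (Proposition~\ref{prop-metricsubregular}): there are $\kappa_1,\epsilon_1>0$ with $p(y)-p(\bar{y})\ge\inprod{\bar{v}}{y-\bar{y}}+\kappa_1\,{\rm dist}^2(y,\mathcal{G}(\bar{x}))$ for all $y\in\mathbb{B}_{\epsilon_1}(\bar{y})$. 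Adding the two bounds, the linear terms combine into $\inprod{R^\top\bar{\zeta}+c+\bar{v}}{y-\bar{y}}=\inprod{\bar{x}_2}{A(y-\bar{y})}$, which vanishes for every $y\in F_D$ because $Ay=b=A\bar{y}$. Therefore, for all $y\in\mathbb{B}_{\epsilon_1}(\bar{y})\cap F_D$,
\[
-g^0(y)+g^0(\bar{y})\ \ge\ \tfrac12\norm{R(y-\bar{y})}^2+\kappa_1\,{\rm dist}^2(y,\mathcal{G}(\bar{x})).
\]

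It then remains to recognize the right-hand side as a multiple of ${\rm dist}^2(y,{\rm SOL_D})$. Since $\overline{\mathcal{V}}=\bar{y}+\ker R$ and $\ker R=\ker H$, the term $\norm{R(y-\bar{y})}=\norm{Ry-\bar{\zeta}}$ dominates ${\rm dist}(y,\overline{\mathcal{V}})$ up to $\sqrt{\lambda_+}$, where $\lambda_+$ is the smallest positive eigenvalue of $H$, so $\tfrac12\norm{R(y-\bar{y})}^2\ge\tfrac{\lambda_+}{2}\,{\rm dist}^2(y,\overline{\mathcal{V}})$. Using $a^2+b^2\ge\max\{a^2,b^2\}$ and then the bounded linear regularity of $\{\overline{\mathcal{V}},\mathcal{G}(\bar{x})\}$ on the bounded set $\mathbb{B}_{\epsilon_1}(\bar{y})$ with modulus $\kappa_2$, the right-hand side above is at least $\kappa_2^{-2}\min\{\lambda_+/2,\kappa_1\}\,{\rm dist}^2(y,\overline{\mathcal{V}}\cap\mathcal{G}(\bar{x}))$. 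Since ${\rm SOL_D}=\overline{\mathcal{V}}\cap\mathcal{G}(\bar{x})\cap F_D$ and $y\in F_D$, one further error-bound step converts ${\rm dist}(y,\overline{\mathcal{V}}\cap\mathcal{G}(\bar{x}))$ into ${\rm dist}(y,{\rm SOL_D})$; here I would use that $\overline{\mathcal{V}}$ and $F_D$ are affine subspaces (hence polyhedral) and invoke a Hoffman-type bound to glue in the feasibility set. Shrinking $\epsilon$ then yields the inequality of Definition~\ref{def-quadgrowth}.

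The step I expect to be the main obstacle is precisely this last gluing of $F_D$. Bounded linear regularity is assumed only for the pair $\{\overline{\mathcal{V}},\mathcal{G}(\bar{x})\}$, whereas ${\rm SOL_D}$ carries the extra polyhedral factor $F_D$, and appending a polyhedral set to a boundedly linearly regular collection is not automatic: the relative-interior sufficient condition of Proposition~\ref{prop-bddlinreg} can fail because $\bar{y}$ may lie on the relative boundary of the face $\mathcal{G}(\bar{x})$ (for instance when some block of $\bar{x}_3$ vanishes while the corresponding block of $\bar{y}$ is on the boundary of its cone). A safe workaround is to carry the affine subspace $\overline{\mathcal{V}}\cap F_D$ in place of $\overline{\mathcal{V}}$ throughout — for $y\in F_D$ the term $\norm{R(y-\bar{y})}$ still controls ${\rm dist}(y,\overline{\mathcal{V}}\cap F_D)$, now through the smallest positive singular value of $R$ restricted to $\ker A$ — and then to verify that the assumed regularity inequality may be restricted to points of $F_D$, once more by Hoffman's lemma.
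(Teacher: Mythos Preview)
Your proposal follows essentially the same route as the paper: expand the smooth quadratic part exactly, invoke metric subregularity of $N_{\cK}$ (via Propositions~\ref{prop-metricsubregular} and~\ref{prop-indicator-K}) to lower-bound $p(y)-p(\bar{y})$, cancel the combined linear term using $y\in F_D$, and then use Hoffman's bound on the affine piece together with bounded linear regularity to convert $\tfrac12\norm{Ry-\bar{\zeta}}^2+\kappa_1\,{\rm dist}^2(y,\mathcal{G}(\bar{x}))$ into a multiple of ${\rm dist}^2(y,{\rm SOL_D})$. The paper merely orders the argument differently (it first upper-bounds ${\rm dist}^2(y,{\rm SOL_D})$, then lower-bounds $-g^0(y)+g^0(\bar{y})$, and combines at the end), but the ingredients and their roles are identical.

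The concern you flag about gluing in $F_D$ is legitimate, and the paper does not handle it any more carefully than you do: its proof opens with the line ${\rm dist}^2(y,{\rm SOL_D})={\rm dist}^2\big(y,\overline{\mathcal{V}}\cap\mathcal{G}(\bar{x})\big)$ for $y\in F_D\cap\mathbb{B}_\epsilon(\bar{y})$, without justification, and then applies BLR of the pair. Since ${\rm SOL_D}=\overline{\mathcal{V}}\cap\mathcal{G}(\bar{x})\cap F_D\subseteq\overline{\mathcal{V}}\cap\mathcal{G}(\bar{x})$, containment alone gives the reverse inequality, not the one needed. So on this point you have been more scrupulous than the paper itself; your suggested repairs (carry the affine set $\overline{\mathcal{V}}\cap F_D$ throughout and control it by $\norm{Ry-\bar\zeta}$ via the smallest positive singular value of $R$ on $\ker A$, or appeal to Hoffman to append the polyhedral $F_D$) are the natural ways to close the gap, though neither you nor the paper spells them out completely.
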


\begin{proof}
	Let $\bar{y} \in {\rm SOL_{D}}$, $\bar{x} = (\bar{x}_1,\bar{x}_2,\bar{x}_3)\in {\rm SOL_P}$ and $\epsilon>0$. Then for any $y\in F_{D}\cap \mathbb{B}_{\epsilon}\big(\bar{y}\big)$ we have that there exist $\kappa_2>0$ and $\kappa_3>0$ such that
	\begin{eqnarray*}
		\begin{aligned}
			{\rm dist}^2 \big( y,{\rm SOL_{D}} \big)
			& =  {\rm dist}^2\,\big( y,\overline{\mathcal{V}}	
			\cap\mathcal {G}(\bar{x})  \big)\\
			&\leq \kappa_2\,
			\big[ {\rm dist}^2\,\big( y,\overline{\mathcal{V}} \big) +{\rm dist}^2\,\big( y, \mathcal{G}(\bar{x}) \big)  \big] \\
			&\leq \kappa_3\, \big[ \norm{R y-\bar{\zeta}}^2
			+ {\rm dist}^2\,\big( y, (\partial p)^{-1}(A^\top x_2-R^\top \bar{\zeta} - c) \big) \big],
		\end{aligned}
	\end{eqnarray*}
	under the assumption that $\big\{\overline{\mathcal{V}},\mathcal{G}(\bar{x})  \big\}$ is boundedly linearly regular. Note that in the last inequality of above, the first term makes use of Hoffman's error bound~\cite{Hoffman1952}.
	
	By Proposition~\ref{prop-metricsubregular}, Proposition~\ref{prop-indicator-K} and $ (\bar{y},A^\top \bar{x}_2- R^\top \bar{\zeta}-c)\in {\rm gph}(\partial p) $, we know that there exists $\kappa_p>0$ such that for any $y\in \mathbb{B}_{\epsilon}(\bar{y})$, it holds that (by shrinking $\epsilon$ if necessary),
	\begin{eqnarray*}
		\begin{aligned}
			p(y)-p(\bar{y}) &  \geq
			\inprod{A^\top  \bar{x}_2- R^\top \bar{\zeta}-c}{y-\bar{y}}+\kappa_p{\rm dist}^2\,(y,(\partial p)^{-1}(A^\top  \bar{x}_2- R^\top \bar{\zeta}-c)).
		\end{aligned}
	\end{eqnarray*}
	
	By combining all the obtained inequalities, we have that for any $y\in F_{D}\cap \mathbb{B}_{\epsilon}(\bar{y})$,
	\begin{eqnarray*}
	\begin{aligned}
	-g^0(y)
	&\;=\; \frac{1}{2}\norm{Ry}^2+\inprod{c}{y}+p(y)\\
	&\;\geq\; \frac{1}{2}\norm{\bar{\zeta}}^2
	+ \inprod{\bar{\zeta}}{Ry-\bar{\zeta}}
	+ \frac{1}{2}\,\norm{Ry-\bar{\zeta}}^2 +\inprod{c}{y} \\
	&\;\quad\; + p(\bar{y})
	+ \inprod{A^\top  \bar{x}_2-R^\top\bar{\zeta} - c}{y-\bar{y}}
	+\kappa_p\,{\rm dist}^2\,\big(y,(\partial p)^{-1}
	(A^\top\bar{x}_2- R^\top \bar{\zeta}-c)\big) \\
	&\;=\; -g^0(\bar{y}) + \frac{1}{2}\,\norm{Ry-\bar{\zeta}}^2
	+ \kappa_p\,{\rm dist}^2\big(y,(\partial {p})^{-1}(A^\top \bar{x}_2
	- R^\top \bar{\zeta}-c)\big)\\
	&\;\geq\; -g^0(\bar{y})
	+ \kappa_3^{-1}\min\{\kappa_p,\frac{1}{2}\}\, {\rm dist}^2\,(x,{\rm SOL_{D}}),
	\end{aligned}
	\end{eqnarray*}
	which is exactly the quadratic growth condition for problem $ {\rm (D)}$. Therefore, the proof is completed.
\end{proof}

By the definitions of $\overline{\mathcal{V}}$ and ${\rm F_D}$, it is obvious that both sets are polyhedral. However, $\mathcal{G}(\cdot)$ is not always polyhedral. Indeed, let $\bar{x}_3:= -A^\top\bar{x}_2+ R^\top \bar{\zeta}+c = \left((\bar{x}_3)_1,\cdots,(\bar{x}_3)_r\right)^\top \in\R^n$, since (see, e.g.,~\cite{JFBonnans2005})
\begin{eqnarray*}
	(\partial \delta_{\cK^{n_i}})^{-1}\,(-(\bar{x}_3)_i) = \cN_{(\cK^{n_i})^\circ}(-(\bar{x}_3)_i) =
	\begin{cases}
		\{0\} & (\bar{x}_3)_i\in {\rm int}\,\cK^{n_i},\\
		\cK^{n_i} & (\bar{x}_3)_i = 0,\\
		\R_+((\bar{x}_3)_{i,0},-(\bar{x}_3)_{i, t}) &
		(\bar{x}_3)_i\in {\rm bd}\,\cK^{n_i} \backslash \{0\},
	\end{cases}
\end{eqnarray*}
we can see that when $n_i\geq 3$, $\mathcal{G}(\bar{x})$ is polyhedral if and only if $(\bar{x}_3)_i\neq 0,\,\forall \, 1\leq i\leq r$. As a consequence, given $\bar{x}_2\in \R^m$, let $J$ be the index set defined as $J:=\left\{i\;|\;(\bar{x}_3)_i = 0\right\}$, if there exists $\bar{y}=(\bar{y}_1,\cdots,\bar{y}_r)^\top\in \R^n$ such that $(\bar{x}_2,\bar{y})$ solves the KKT system~\eqref{kkt-condition2} and $\bar{y}_i\in {\rm int}\,\cK^{n_i}$, $\forall\,i\in J$. Then by Proposition~\ref{prop-bddlinreg}, the collection $\{\overline{\mathcal{V}}, \mathcal{G}(\bar{x}) \}$ is boundedly linearly regular, and hence the quadratic growth condition for the problem $ {\rm (D)}$ holds at any optimal solution. The aforementioned conclusion on $(\bar{x}_2,\bar{y})$ is summarized as follows.

\begin{corollary}
	\label{cor-sc}
	Let $(\bar{x}_2,\bar{y})$ be a solution of the KKT system~\eqref{kkt-condition2} and  $\bar{x}_3 = -A^\top\bar{x}_2+R^\top \bar{\zeta}+c$. If for each block with $n_i\ge 3$, $i = 1,\cdots, r$, $((\bar{x}_3)_i,\bar{y}_i)$ satisfies the  strictly complementary condition: $\bar{y}_i+(\bar{x}_3)_i\in {\rm int}\, \cK^{n_i}$. Then the quadratic growth condition holds at any solution of the problem $ {\rm (D)}$.
\end{corollary}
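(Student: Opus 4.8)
The plan is to reduce the statement to Theorem~\ref{thm-sufficient}: it suffices to exhibit one $\bar x\in{\rm SOL_P}$ for which the collection $\{\overline{\mathcal{V}},\mathcal{G}(\bar x)\}$ is boundedly linearly regular, and to verify that via the convenient sufficient condition of Proposition~\ref{prop-bddlinreg}. The candidate $\bar x$ will be built directly from the given data $(\bar x_2,\bar y)$.

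First I would extend $(\bar x_2,\bar y)$ to a full primal-dual KKT point. Let $\bar x_1\in{\rm Ran}(H)$ be the unique vector with $H\bar x_1=H\bar y$ (so $H\bar x_1=R^\top R\bar y=R^\top\bar\zeta$), and keep $\bar x_3=-A^\top\bar x_2+R^\top\bar\zeta+c$ as in the statement. Then $\bar x=(\bar x_1,\bar x_2,\bar x_3)$ satisfies $-H\bar x_1+A^\top\bar x_2+\bar x_3=c$ and $H(\bar x_1-\bar y)=0$, and from~\eqref{kkt-condition2} we get $-\bar x_3=A^\top\bar x_2-R^\top\bar\zeta-c\in\partial p(\bar y)=\cN_{\cK}(\bar y)$; since $\cK$ is self-dual this forces $\bar x_3\in\cK$, $\bar y\in\cK$, and $\inprod{\bar x_3}{\bar y}=0$. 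Together with $A\bar y=b$, this shows $(\bar x,\bar y)$ solves~\eqref{kkt-condition}, so $\bar x\in{\rm SOL_P}$ and $\bar y\in{\rm SOL_D}$. Moreover $R\bar y=\bar\zeta$, that is, $\bar y\in\overline{\mathcal{V}}$, and $-\bar x_3\in\partial p(\bar y)$ gives $\bar y\in(\partial p)^{-1}(-\bar x_3)=\mathcal{G}(\bar x)$.

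Next I would unfold $\mathcal{G}(\bar x)=(\partial p)^{-1}(-\bar x_3)$ blockwise via the formula for $(\partial\delta_{\cK^{n_i}})^{-1}(-(\bar x_3)_i)$ displayed just before the corollary: the $i$-th factor is $\{0\}$, a ray, or the whole cone $\cK^{n_i}$, and it fails to be polyhedral exactly when $n_i\ge 3$ and $(\bar x_3)_i=0$ (a ray is polyhedral, and so is $\cK^{n_i}$ when $n_i\le 2$). Set $J:=\{\,i:\ n_i\ge 3,\ (\bar x_3)_i=0\,\}$, collect the polyhedral block constraints (indices $i\notin J$) into a polyhedral set $P$, and let $C$ be the product of the cones $\cK^{n_i}$, $i\in J$, with no restriction on the other blocks, so that $\mathcal{G}(\bar x)=P\cap C$, the set $\overline{\mathcal{V}}\cap P$ is polyhedral, and ${\rm ri}\,(C)$ is the product of the interiors ${\rm int}\,\cK^{n_i}$, $i\in J$.

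Finally I would apply Proposition~\ref{prop-bddlinreg} to the pair $\{\overline{\mathcal{V}}\cap P,\ C\}$, whose polyhedral member is $\overline{\mathcal{V}}\cap P$. The required condition $(\overline{\mathcal{V}}\cap P)\cap{\rm ri}\,(C)\neq\emptyset$ is witnessed by $\bar y$: from the second step $\bar y\in\overline{\mathcal{V}}$ and $\bar y\in\mathcal{G}(\bar x)\subseteq P$, while for each $i\in J$ we have $(\bar x_3)_i=0$, so the strict complementarity hypothesis gives $\bar y_i=\bar y_i+(\bar x_3)_i\in{\rm int}\,\cK^{n_i}$, hence $\bar y\in{\rm ri}\,(C)$. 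Thus $\{\overline{\mathcal{V}}\cap P,\ C\}$ is boundedly linearly regular; combining this with the global bounded linear regularity of the polyhedral pair $\{\overline{\mathcal{V}},P\}$ (Hoffman's error bound~\cite{Hoffman1952}) and the inclusions $\mathcal{G}(\bar x)\subseteq P$ and $\mathcal{G}(\bar x)\subseteq C$, a short chaining of distance inequalities on an arbitrary bounded set transfers bounded linear regularity to $\{\overline{\mathcal{V}},\mathcal{G}(\bar x)\}$, and Theorem~\ref{thm-sufficient} then delivers the quadratic growth condition at every $\bar y\in{\rm SOL_D}$. I do not expect a genuine obstacle here, since the corollary largely repackages the discussion preceding it; the only mildly delicate points are using the block formula to pin down exactly which factors of $\mathcal{G}(\bar x)$ are non-polyhedral, and the routine bookkeeping that carries bounded linear regularity from the refined collection $\{\overline{\mathcal{V}}\cap P,\ C\}$ back to $\{\overline{\mathcal{V}},\mathcal{G}(\bar x)\}$.
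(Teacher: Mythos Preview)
Your proposal is correct and follows essentially the same route as the paper: both reduce to Theorem~\ref{thm-sufficient} by verifying, via Proposition~\ref{prop-bddlinreg}, that $\{\overline{\mathcal V},\mathcal G(\bar x)\}$ is boundedly linearly regular, using strict complementarity to place $\bar y_i$ in ${\rm int}\,\cK^{n_i}$ whenever $(\bar x_3)_i=0$. The paper's proof is terser---it cites \cite[Corollary~24]{alizadehsocp2003} for the case analysis of strict complementarity and then defers to ``the above discussions''---whereas you spell out the construction of the full KKT point, the explicit polyhedral/non-polyhedral splitting $\mathcal G(\bar x)=P\cap C$, and the chaining of error bounds; this extra care is harmless and, if anything, makes the application of Proposition~\ref{prop-bddlinreg} cleaner than applying it directly to the two-element collection $\{\overline{\mathcal V},\mathcal G(\bar x)\}$.
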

\begin{proof}
	By~\cite[Corollary 24]{alizadehsocp2003}, we know that for each block with $n_i\ge 3$, $1\leq i\leq r$, $((\bar{x}_3)_i,\bar{y}_i)$ satisfies the strictly complementary condition either when both $\bar{y}_i$ and $(\bar{x}_3)_i$ are nonzero and in the ${\rm bd}\, \cK^{n_i}$, or when one of them is zero and the other is in the interior of $\cK^{n_i}$. Then by the above discussions, the conclusion can be derived in a straight-forward manner.
\end{proof}

\section{Solving the ALM subproblem by an inexact semismooth Newton method}
\label{sec-issn}
In this section, we propose an inexact semismooth Newton method for solving subproblems arsing from the inexact ALM in~\eqref{alg-alm} applied to the problem $ {\rm (P)} $.

For given $y$ and $\sigma$, denote $ \tilde{y}(x_1,x_2,y):= Hx_1-A^\top x_2 - y/\sigma + c$. Recall that the exact ALM subproblem is given by
\begin{equation}
\label{alm-subprob}
(x_1^+,x_2^+,x_3^+) =
\underset{(x_1,x_2,x_3)\in \X}{{\rm argmin}}
\left\{ \begin{array}{l}
\frac{1}{2}\inprod{x_1}{Hx_1} -
\inprod{b}{x_2}+\delta_\cK(x_3)
\\
+
\frac{1}{2\sigma}
\left(
\norm{\sigma(x_3 -  \tilde{y}(x_1,x_2,y))}^2-\norm{y}^2
\right)
\end{array}
\right\}.
\end{equation}
By simple calculations, we have
\begin{eqnarray}
\label{eq-x3}
x_3^+ = \Pi_{\cK}\left( \tilde{y}(x_1^+,x_2^+,y) \right).
\end{eqnarray}
Therefore, by using the Moreau identity, we obtain that to solve the problem~\eqref{alm-subprob}, it is equivalent to solve
\begin{equation}
\label{alm-subprob2}
\min_{x_1,x_2}\;\psi(x_1,x_2) :=
\frac{1}{2}\inprod{x_1}{Hx_1} - \inprod{b}{x_2} +
\frac{1}{2\sigma}
\left(
\norm{
	\Pi_{\cK}
	\left[
	-\sigma \tilde{y}(x_1,x_2,y)
	\right]
}^2 -
\norm{y}^2 \right).
\end{equation}
Once $x_1^+$ and $x_2^+$ have been computed, we can obtain $x_3^+$ via \eqref{eq-x3}. Furthermore, to solve the above unconstrained minimization problem with respect to $(x_1,x_2)\in {\rm Ran}(H)\times \R^m$, it is equivalent to solve the following system of nonsmooth equations:
\begin{equation*}
\grad \psi(x_1,x_2) =
\begin{pmatrix}
Hx_1 - H\Pi_{\cK}
\left[ -\sigma \tilde{y}(x_1,x_2,y) \right] \\
-b + A \Pi_{\cK}\left[ -\sigma \tilde{y}(x_1,x_2,y) \right]
\end{pmatrix} = 0,\quad (x_1,x_2)\in \mathrm{Ran}(H)\times \mathbb{R}^m.
\end{equation*}
Since $\Pi_{\cK}(\cdot)$ is strongly semismooth everywhere (by Proposition \ref{lem-semismooth}), it is desirable to apply a semismooth Newton method to solve the above system of nonsmooth equations as one could expect a superlinear \textit{or} even quadratic convergence rate. To this end, for any $(x_1,x_2)\in {\rm Ran}(H)\times \R^m$, we define
\begin{eqnarray*}
	\hat{\partial}^2\psi(x_1,x_2):=
	\begin{pmatrix}
		H & \\
		& 0
	\end{pmatrix} + \sigma
	\begin{pmatrix}
		H \\
		-A
	\end{pmatrix}
	\partial \Pi_{\cK}\left[ -\sigma \tilde{y}(x_1,x_2,y) \right]
	\begin{pmatrix}
		H &-A^\top
	\end{pmatrix}.
\end{eqnarray*}
Then $\hat{\partial}^2\psi(x_1,x_2)$ can serve as a replacement of the (hard-to-characterize) generalized Hessian of $ \psi $ at $ (x_1,x_2) $, namely, $ \partial^2\psi(x_1,x_2) $, in the sense that for any $d_1\in{\rm Ran}(H)$ and $d_2\in \R^m$,
\begin{eqnarray*}
	\hat{\partial}^2\psi(x_1,x_2)
	\left(
	\begin{pmatrix}
		d_1\\
		d_2
	\end{pmatrix}
	\right) =
	\partial^2 \psi(x_1,x_2)
	\left(
	\begin{pmatrix}
		d_1\\
		d_2
	\end{pmatrix}
	\right).
\end{eqnarray*}

Next we present the well-known inexact semismooth Newton method in \cite{ZhaoSTNewtonCGALM2010} to solve \eqref{alm-subprob2} as in Algorithm~\ref{alg-iSSN}.
\begin{algorithm}[t]
	\quad \quad Given  $ \hat{\nu}\in (0,1) $, $\tau\in (0,1]$, $\tau_1,\tau_2\in (0,1)$, and
	$\mu\in (0,1/2)$, $\delta\in(0,1)$.
	Choose $(x_1^0,x_2^0)\in {\rm Ran}(H)\times \R^m$.
	Perform the following iterations for $j = 0,1,2,\cdots,$
	\begin{description}
		\item[Step 1.] Set $ \epsilon_j:=\tau_1\min\left\{\tau_2, \norm{\grad \psi(x_1^j,x_2^j)} \right\} $ and $\nu_j:=\min\left\{  \hat{\nu},\norm{\grad \psi(x_1^j,x_2^j)}^{1+\tau}\right\}$. Find $(d_1^j,d_2^j)\in  {\rm Ran}(H)\times \R^m$ by solving the following linear system approximately
		\begin{eqnarray*}
			M_j
			\begin{pmatrix}
				d_1\\
				d_2
			\end{pmatrix} + \epsilon_j
			\begin{pmatrix}
				0\\
				d_2
			\end{pmatrix} +
			\grad \psi(x_1^j,x_2^j) = 0,\quad
			M_j \in \hat{\partial}^2\psi(x_1^j,x_2^j)
		\end{eqnarray*}
		in the sense that
		\begin{eqnarray*}
			\norm{
				M_j
				\begin{pmatrix}
					d_1^j\\ d_2^j
				\end{pmatrix} + \epsilon_j
				\begin{pmatrix}
					0\\
					d_2^j
				\end{pmatrix} +
				\grad \psi(x_1^j, x_2^j)
			} \leq \nu_j.
		\end{eqnarray*}
		\item[Step 2.] Set $\alpha_j=\delta^{m_j}$ where $m_j$ is the smallest non-negative integer $m$ for which
		\begin{eqnarray*}
			\psi(x_1^j+\delta^md_1^j,x_2^j+\delta^md_2^j) \leq
			\psi(x_1^j,x_2^j) + \mu \delta^m \inprod{\grad \psi(x_1^j,x_2^j)}{\begin{pmatrix}
					d_1^j\\
					d_2^j
			\end{pmatrix}}.
		\end{eqnarray*}
		\item[Step 3.] Set $x_1^{j+1} = x_1^j+\alpha_j d_1^j$ and $x_2^{j+1} = x_2^j+\alpha_j d_2^j$.
	\end{description}
	\caption{{\bf Algorithm iSSN: An inexact semismooth Newton method (iSSN($y,\sigma $)).}}
	\label{alg-iSSN}
\end{algorithm}

The convergence of Algorithm~\ref{alg-iSSN} is given by the next theorem under the following assumption.
\begin{assumption}
	\label{assump-slater}
	The linear mapping $A:\R^n\rightarrow \R^m$ is onto, and there exists $ \hat{y}\in {\rm int}\,\cK$ such that $A\hat{y} = b$.
\end{assumption}
\begin{theorem}
	\label{thm-convergeissn}
	Suppose that Assumption~\ref{assump-slater} holds.
	Then Algorithm~\ref{alg-iSSN} generates a bounded sequence $\{(x_1^j,x_2^j)\}$
	such that any of its accumulation point is an optimal solution to problem~\eqref{alm-subprob2}.
\end{theorem}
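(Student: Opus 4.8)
The plan is to establish the two assertions of Theorem~\ref{thm-convergeissn}---boundedness of $\{(x_1^j,x_2^j)\}$ and optimality of its accumulation points---by combining the general convergence theory for inexact semismooth Newton methods with line search (as in~\cite{ZhaoSTNewtonCGALM2010}) with the structural properties of $\psi$ specific to our setting. First I would observe that, since $\Pi_{\cK}(\cdot)$ is Lipschitz continuous and strongly semismooth everywhere (Lemma~\ref{lem-semismooth}), the objective $\psi$ in~\eqref{alm-subprob2} is a continuously differentiable convex function whose gradient $\grad\psi$ is strongly semismooth, and each $M_j+\epsilon_j\begin{pmatrix}0&\\&I\end{pmatrix}\in\hat\partial^2\psi(x_1^j,x_2^j)+\epsilon_j\begin{pmatrix}0&\\&I\end{pmatrix}$ is symmetric positive semidefinite (indeed, the $\epsilon_j$-regularization together with the term $\sigma\begin{pmatrix}H\\-A\end{pmatrix}\partial\Pi_\cK[\cdot]\begin{pmatrix}H&-A^\top\end{pmatrix}$ ensures this). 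This guarantees that the Newton direction $(d_1^j,d_2^j)$ computed approximately in Step~1 is a descent direction for $\psi$ at $(x_1^j,x_2^j)$ whenever $\grad\psi(x_1^j,x_2^j)\neq 0$, so the Armijo line search in Step~2 is well-defined and terminates finitely.

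The heart of the argument is to show that the level set $\big\{(x_1,x_2)\in{\rm Ran}(H)\times\R^m : \psi(x_1,x_2)\le\psi(x_1^0,x_2^0)\big\}$ is bounded; this is where Assumption~\ref{assump-slater} enters. The key point is that~\eqref{alm-subprob2} is, up to the constant $-\norm{y}^2/(2\sigma)$, the Moreau--Yosida-type dual reformulation of the exact ALM subproblem~\eqref{alm-subprob}, which is itself the augmented Lagrangian of ${\rm (P)}$ evaluated at fixed $(y,\sigma)$. Under Assumption~\ref{assump-slater}---$A$ onto and existence of $\hat y\in{\rm int}\,\cK$ with $A\hat y=b$, i.e.\ the Slater/RCQ condition for ${\rm (D)}$---the inner minimization problem~\eqref{alm-subprob} has a nonempty and bounded solution set (by~\cite[Theorem 3.9]{bonnans2013perturbation}, exactly as invoked after Assumption~\ref{assumpt-RCQ}), and its essential objective is level-bounded on $\X$. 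Passing through the Moreau identity, which was used to derive~\eqref{alm-subprob2} from~\eqref{alm-subprob}, one transfers this coercivity to $\psi$ on ${\rm Ran}(H)\times\R^m$: the quadratic term $\tfrac12\inprod{x_1}{Hx_1}$ controls $x_1$ on ${\rm Ran}(H)$ (where $H$ is positive definite), the term $-\inprod{b}{x_2}$ is compensated by the projection term precisely because $b=A\hat y$ lies in the relative interior of the relevant cone image, and hence $\psi$ is coercive. Consequently the generated sequence stays in a compact set; together with the monotone decrease $\psi(x_1^{j+1},x_2^{j+1})\le\psi(x_1^j,x_2^j)$ from Step~2, all accumulation points exist.

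Finally, to identify an accumulation point $(\bar x_1,\bar x_2)$ as optimal for~\eqref{alm-subprob2}, I would run the standard line-search convergence argument: along a convergent subsequence, the sufficient-decrease inequality in Step~2 and the boundedness of the $M_j$ (which follows from local boundedness of $\partial\Pi_\cK(\cdot)$ and of the iterates) force $\grad\psi(x_1^j,x_2^j)\to 0$ on that subsequence; since the residual tolerance satisfies $\nu_j\le\norm{\grad\psi(x_1^j,x_2^j)}^{1+\tau}\to 0$, the approximate Newton equation yields the exact stationarity $\grad\psi(\bar x_1,\bar x_2)=0$ in the limit. Convexity of $\psi$ then upgrades stationarity to global optimality for~\eqref{alm-subprob2}. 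I expect the main obstacle to be the coercivity/level-boundedness step: one must carefully exploit the precise form of the projection term $\norm{\Pi_\cK[-\sigma\tilde y(x_1,x_2,y)]}^2$ and the Slater point $\hat y$ to rule out directions along which the linear part $-\inprod{b}{x_2}$ drives $\psi$ to $-\infty$, which is exactly the translation of "the dual problem ${\rm (D)}$ has a bounded primal solution set under Slater" into the language of the reduced subproblem; the rest is a routine invocation of the semismooth Newton convergence machinery of~\cite{ZhaoSTNewtonCGALM2010}.
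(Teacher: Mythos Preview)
Your proposal is correct and follows essentially the same route as the paper, which does not give an independent argument but simply refers the reader to \cite[Theorem~3.4]{ZhaoSTNewtonCGALM2010}; your sketch reconstructs precisely the ingredients of that result (coercivity of $\psi$ under the Slater condition in Assumption~\ref{assump-slater}, descent property of the inexact Newton direction, and the standard Armijo line-search convergence argument). One small imprecision: you call the regularized matrix $M_j+\epsilon_j\bigl(\begin{smallmatrix}0&\\&I\end{smallmatrix}\bigr)$ merely positive \emph{semidefinite}, but for the descent property you actually use that it is positive \emph{definite} on ${\rm Ran}(H)\times\R^m$ (since $H$ is positive definite on ${\rm Ran}(H)$ and $\epsilon_j>0$ handles the $x_2$-block); this is exactly why the restriction $d_1\in{\rm Ran}(H)$ is imposed, as the paper emphasizes after Theorem~\ref{thm-rateissn}.
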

Readers may refer to~\cite[Theorem 3.4]{ZhaoSTNewtonCGALM2010} for a proof of Theorem~\ref{thm-convergeissn}. To obtain a fast superlinear convergence rate {\it or} even a quadratic convergence rate of Algorithm~\ref{alg-iSSN}, one needs the positive definiteness of the coefficient matrix in the linear system at the solution point. Establishing conditions that ensure the positive definiteness of the coefficient matrix is important for the convergence analysis. Next theorem provides the convergence rate of the algorithm under the constraint nondegeneracy condition, whose proof can be done by combining the results from~\cite[Proposition 3.1, Theorem 3.2]{LiSTQSDPNAL2018} and~\cite[Proposition 3.2, Theorem 3.5]{ZhaoSTNewtonCGALM2010}.
\begin{theorem}
	\label{thm-rateissn}
	Suppose that Assumption~\ref{assump-slater} holds. Let $(\hat{x}_1,\hat{x}_2)$ be an accumulation point of the infinite sequence $\{(x_1^j,x_2^j)\}$ generated by Algorithm ${\rm iSSN}$ for problem~\eqref{alm-subprob2}. Let $\hat{y}:=\Pi_{\cK}\left(-\sigma \tilde{y}(\hat{x}_1, \hat{x}_2,y) \right)$. Assume that the following constraint nondegeneracy condition holds
	\begin{eqnarray*}
		A\,{\rm lin}\,\Big( \cT_{\cK}(\hat{y})  \Big) = \R^m,
	\end{eqnarray*}
	where ${\rm lin}\,\Big( \cT_{\cK}(\hat{y})  \Big)$ denotes the lineality space of the tangent cone of $\cK$ at $\hat{y}$. Then, the whole sequence $\{(x_1^j,x_2^j)\}$ converges to $(\hat{x}_1,\hat{x}_2)$ and
	\begin{eqnarray*}
		\norm{(x_1^{j+1},x_2^{j+1}) - (\hat{x}_1,\hat{x}_2)} =
		O\left(
		\norm{(x_1^j,x_2^j) - (\hat{x}_1,\hat{x}_2)}^{1+\tau}
		\right).
	\end{eqnarray*}
\end{theorem}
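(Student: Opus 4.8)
The plan is to combine two ingredients: the general convergence theory for inexact semismooth Newton methods of Zhao, Sun, and Toh, and the structural properties of the second-order cone projection established in Section~\ref{sec-pre}. The desired superlinear (or quadratic) local rate is a direct consequence of \cite[Theorem 3.5]{ZhaoSTNewtonCGALM2010} once we verify its hypotheses at the accumulation point $(\hat{x}_1,\hat{x}_2)$; the substantive work is to show that the constraint nondegeneracy condition $A\,{\rm lin}(\cT_{\cK}(\hat{y}))=\R^m$ forces every element of the coefficient matrix family $\hat{\partial}^2\psi(\hat{x}_1,\hat{x}_2)$ (regularized by the $\epsilon_j$ perturbation in the second block) to be positive definite on the relevant subspace ${\rm Ran}(H)\times\R^m$.

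First I would recall that $\hat{y}=\Pi_{\cK}(-\sigma\tilde{y}(\hat{x}_1,\hat{x}_2,y))$ and, since $(\hat{x}_1,\hat{x}_2)$ is an accumulation point, $\grad\psi(\hat{x}_1,\hat{x}_2)=0$, so $(\hat{x}_1,\hat{x}_2)$ is a minimizer of the convex function $\psi$ and hence $\hat{y}$ together with the corresponding primal-dual data solves the ALM subproblem's optimality system. Next I would analyze a generic $M\in\hat{\partial}^2\psi(\hat{x}_1,\hat{x}_2)$, which has the block form $\operatorname{Diag}(H,0)+\sigma\binom{H}{-A}V\begin{pmatrix}H&-A^\top\end{pmatrix}$ with $V\in\partial\Pi_{\cK}(-\sigma\tilde y)$; using the explicit descriptions of $\partial_B\Pi_K$ in Proposition~\ref{prop-projsoc} one checks that every such $V$ is symmetric positive semidefinite with $\|V\|\le 1$ and, crucially, that its range and kernel are governed by the face of $\cK$ containing $\hat{y}$. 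For a direction $(d_1,d_2)$ with $d_1\in{\rm Ran}(H)$, $d_2\in\R^m$, the quadratic form $\langle(d_1,d_2),(M+\epsilon\,\operatorname{Diag}(0,I))(d_1,d_2)\rangle$ equals $\langle d_1,Hd_1\rangle+\sigma\langle V w,w\rangle+\epsilon\|d_2\|^2$ where $w=Hd_1-A^\top d_2$, so it vanishes only if $Hd_1=0$ (forcing $d_1=0$ since $d_1\in{\rm Ran}(H)$), $d_2=0$ in the regularized case, or—in the unregularized case—$w\in\ker V$ with $A^\top d_2\in\ker V$ and $d_1=0$. The constraint nondegeneracy condition is then invoked precisely to rule out a nonzero $d_2$ with $A^\top d_2\in\ker V$: since $\ker V$ for $V\in\partial_B\Pi_\cK(\hat{y})$ is contained in the orthogonal complement of ${\rm lin}(\cT_\cK(\hat y))$, the relation $A^\top d_2\perp {\rm lin}(\cT_\cK(\hat y))$ combined with $A\,{\rm lin}(\cT_\cK(\hat y))=\R^m$ yields $d_2=0$. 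Hence $M+\epsilon\operatorname{Diag}(0,I)\succ0$ on ${\rm Ran}(H)\times\R^m$.

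With nonsingularity of the Newton coefficient matrices at the limit point in hand, I would then cite the strong semismoothness of $\Pi_\cK$ (Lemma~\ref{lem-semismooth}), which makes $\grad\psi$ strongly semismooth, and invoke the standard local convergence theorem for the inexact semismooth Newton method with line search: near a nonsingular semismooth solution, unit step sizes are eventually accepted, the whole sequence converges to $(\hat x_1,\hat x_2)$, and the rate is $1+\tau$ because of the choices $\epsilon_j=\tau_1\min\{\tau_2,\|\grad\psi\|\}$ and $\nu_j=\min\{\hat\nu,\|\grad\psi\|^{1+\tau}\}$ controlling the forcing terms. Assembling these pieces gives $\|(x_1^{j+1},x_2^{j+1})-(\hat x_1,\hat x_2)\|=O(\|(x_1^j,x_2^j)-(\hat x_1,\hat x_2)\|^{1+\tau})$.

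The main obstacle I anticipate is the second step: precisely relating $\ker V$ (for an arbitrary $V$ in the \emph{Clarke} generalized Jacobian, i.e.\ a convex combination of B-subdifferential elements, not merely a single B-element) to the lineality space of the tangent cone, and handling the degenerate boundary cases $x_0=\pm\|x_t\|$ and $x=0$ in Proposition~\ref{prop-projsoc} uniformly. One must verify that \emph{every} $V\in\partial\Pi_\cK$ at the point in question satisfies the kernel inclusion, which requires knowing that the ``extreme'' matrices in cases (2)–(4) already have their kernels inside $({\rm lin}\,\cT_\cK(\hat y))^\perp$ and that this property is preserved under convex combination—this is the analogue for the second-order cone of the well-known characterization used for the semidefinite cone in \cite{ZhaoSTNewtonCGALM2010,LiSTQSDPNAL2018}, and is exactly where the cited combination of results from those two papers does the heavy lifting.
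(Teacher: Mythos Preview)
Your proposal is correct and follows essentially the same route as the paper, which does not give a self-contained proof but simply states that the result follows by combining \cite[Proposition 3.1, Theorem 3.2]{LiSTQSDPNAL2018} with \cite[Proposition 3.2, Theorem 3.5]{ZhaoSTNewtonCGALM2010}. You have spelled out exactly what those citations are doing: constraint nondegeneracy forces every $M\in\hat{\partial}^2\psi(\hat{x}_1,\hat{x}_2)$ to be positive definite on ${\rm Ran}(H)\times\R^m$ (this is the content of \cite[Theorem 3.2]{LiSTQSDPNAL2018}, as the paper itself notes just after the theorem), and then strong semismoothness of $\Pi_\cK$ plus the standard inexact semismooth Newton convergence theory from \cite{ZhaoSTNewtonCGALM2010} delivers the $(1+\tau)$-order rate.

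One small remark on the obstacle you flag: your worry about convex combinations in the Clarke Jacobian is not a real difficulty. If $V=\sum_i\lambda_iV_i$ with $\lambda_i>0$ and each $V_i\succeq 0$, then $\ker V=\bigcap_i\ker V_i$, so the inclusion $\ker V_i\subseteq({\rm lin}\,\cT_\cK(\hat y))^\perp$ for each B-element $V_i$ immediately passes to all of $\partial\Pi_\cK$. The case analysis in Proposition~\ref{prop-projsoc} therefore suffices, and this is precisely the second-order cone analogue of \cite[Proposition 3.2]{ZhaoSTNewtonCGALM2010}.
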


\begin{remark}
	The constraint nondegeneracy condition in the above theorem could be hard to verify since the accumulation point $(\hat{x}_1, \hat{x}_2)$ is usually not known. Fortunately, for some special problems one may check that this condition holds at any feasible solution. For such an example, see Theorem \ref{thm-nondegmeb} in Section \ref{subsec-meb} on solving MEB problems.
\end{remark}

Note that under the constraint nondegeneracy condition, one can  show that every element in $\hat{\partial}^2\psi(\hat{x}_1, \hat{x}_2)$ is self-adjoint and positive definite on $\mathrm{Ran}(H)\times \mathbb{R}^m$; see \cite[Theorem 3.2]{LiSTQSDPNAL2018}. It is also clear that if $H$ is not positive definite on $\mathbb{R}^n$, then $\mathrm{Ran}(H)\neq \mathbb{R}^n$. Thus, if  $\mathrm{Ran}(H)$ is replaced by any linear subspace of $\mathbb{R}^n$ strictly containing $\mathrm{Ran}(H)$ in the formulation of problem \textrm{(P)}, then the local fast convergence rate for Algorithm iSSN will be lost. As a result, the restriction  $x_1\in \mathrm{Ran}(H)$ in problem \textrm{(P)} in fact plays a crucial role in our algorithmic framework. We will discuss later in Section \ref{subsec-implementation} on how to implement the restriction $(d_1, d_2)\in \mathrm{Ran}(H)\times \mathbb{R}^m$ when solving the linear system in Algorithm iSSN.

We end this section by emphasizing that our ALM, equipped with a semismooth Newton method for solving the ALM subproblems, is an inner-outer loop algorithm. By our convergence analysis, both the inner loop and the outer loop have fast convergence rates under some technical assumptions. Thus, our present algorithm is a ``fast+fast'' algorithm.

\section{Numerical implementation and  experiments}
\label{sec-numexp}
In this section, we aim to design an efficient solver for the following  SOCP problem
\begin{eqnarray}
\label{QLP}
\min_{x_1,x_2,x_3} \;
\left\{
\frac{1}{2}\inprod{x_1}{Hx_1}-\inprod{b}{x_2}\;\left\vert\;
\begin{array}{c}
-Hx_1 + A^\top x_2+x_3 = c,\quad  \\
x_3 = \big((x_3)_0,(x_3)_1,\cdots,(x_3)_r\big) \in \cK
\end{array}
\right.
\right\},
\end{eqnarray}
where $\cK:=\R_+^{n_0} \times \cK^{n_1}\times \cdots \times \cK^{n_r}$ with $\cK^{n_i}$ $(n_i\geq 3)$ being the second-order cone in $\R^{n_i}$ for $1\leq i\leq r$, $c=(c_0,c_1,\cdots,c_r)^\top\in \R^{n_0}\times \R^{n_1}\times\cdots \times \R^{n_r}$, $b\in \R^m$, $A\in \R^{m\times n}$, and $H\in\S_+^n$  are given data
with
$ n = n_0+n_1+\cdots+n_r.$
Moreover,  if we treat $A$ as a linear mapping such that
$A:\R^n\rightarrow \R^m$, then it has the following form:
\begin{eqnarray*}
	Ay  := \sum_{i=0}^rA_iy_i,\quad
	A_i\in\R^{m\times n_i},\;
	0\leq i \leq r
	\quad \forall \; y = (y_0;y_1;\cdots;y_r)\in\R^n.
\end{eqnarray*}

Note that in~\eqref{QLP}, we consider additionally a nonnegative constraint since it often appears in real world applications. However, all the theoretical development in the previous parts of the paper can easily be extended to include the additional nonnegative constraint since the cone $\R^{n_0}_+$ is polyhedral.

In the remaining part of this section, we first discuss some implementation details for the proposed ALM. Next, we apply our SOCP solver to solve MEB problems, trust-region subproblems, square-root Lasso problems, and some linear SOCPs problems in the DIMACS challenge data set. We also mention here that the purpose of our numerical experiments is to compare the efficiency of our proposed ALM against other well-known linear SOCP solvers. Therefore, we do not compare the performance of our solver with specialized solvers for each application that are presented in the rest of this section.

\subsection{On the efficient implementation of the ALM for SOCP}
\label{subsec-implementation}
In this subsection, we present some implementation details for our ALM solver. In particular, we discuss how to solve the Newton systems efficiently when the input data possesses certain sparsity structures.

First, we consider solving systems arising in linear SOCPs. Let us focus on the case when $A$ is a sparse matrix. For any given $(y, x_2)$ and $\sigma>0$, it is shown in section~\ref{sec-issn} that the crucial task for solving the ALM subproblem is to solve a linear system in the following form:
	\begin{eqnarray}
		\label{lineareq}
		Md :=
		\left(\epsilon I_m + \sum_{i=0}^r M_i\right)d = {\rm rhs},
		\quad d\in \mathbb{R}^m,
	\end{eqnarray}
where $M_i:=A_iV_iA_i^\top $, for $1\leq i\leq r$, $\epsilon$ is a small positive  number, ${\rm rhs}$ is a given vector, and
	\begin{eqnarray*}
		V_0 & \in & \partial_B \Pi_{\R_+^n}
		\left(
			y_0+\sigma(A_0^\top x_2-c_0)
		\right),\\ [5pt]
		V_i & \in & \partial_B \Pi_{\cK^{n_i}}
		\left(
			y_i+\sigma(A_i^\top x_2-c_i)
		\right),\quad
		1\leq i \leq r.
	\end{eqnarray*}
From the description of the elements in $\partial_B\Pi_{\cK^{n_i}}(\cdot)$ presented in section~\ref{sec-pre}, we can see that if $V_i$ ($1\leq i\leq r$) takes the following form:
\begin{eqnarray*}
	V_i = \frac{1}{2}
	\begin{pmatrix}
		1 		 & \omega_i^\top  \\[5pt]
		\omega_i & (1+\rho_i)I_{n_i-1}-\rho_i \omega_i\omega_i^\top
	\end{pmatrix},\quad
	|\rho_i| \leq 1,\;
	\norm{\omega_i}= 1,
\end{eqnarray*}
then $M_i$ can be rewritten as
\begin{eqnarray*}
	M_i \;=\; A_iV_iA_i^\top \;= \;
	\frac{1+\rho_i}{2} A_i A_i^\top + \frac{1}{2}
	\left(A_{i,1},\; A_{i,2}w_i\right) \left(\begin{array}{cc} -\rho_i & 1 \\ 1 & -\rho_i\end{array} \right)
	\left(A_{i,1},\; A_{i,2}w_i\right)^\top,
\end{eqnarray*}
where $A_i = (A_{i,1},A_{i,2})$ with $A_{i,1} \in \R^m$ and $A_{i,2} \in \R^{m\times {(n_i-1)}}$.

The presence of the outer-product terms in the formulation of the matrix $M_i=A_iV_iA_i^\top $ can cause numerical issue in the following sense. If the vector $A_{i,2}w_i$ is dense, even when $A_{i}A_{i}^\top $ is a sparse matrix, $M_i$ will still be a dense matrix. In this case, directly solving~\eqref{lineareq} based on Cholesky factorization will be time consuming. To overcome the aforementioned issue, we will apply the following dense-column handling technique to exploit the possibly sparse part of the matrix $M_i$.

Let us assume that the coefficient matrix $M$ can be written as $M = M_{\rm sp}+UDU^\top$ where $M_{\rm sp}$ is a sparse symmetric positive definite matrix, $U$ has only a few columns, and $D$ is an invertible diagonal matrix. Then we can solve the linear system~\eqref{lineareq} by solving the following slightly larger but sparse linear system:
\begin{eqnarray}
\label{lineareqbig}
\cM
\begin{pmatrix}
d \\[5pt]
d_u
\end{pmatrix} =
\begin{pmatrix}
{\rm rhs} \\[5pt]
0
\end{pmatrix},\quad
\cM:=
\begin{pmatrix}
M_{\rm sp} & U       \\[5pt]
U^\top        & -D^{-1},
\end{pmatrix},\quad
d_u:=DU^\top d.
\end{eqnarray}
To obtain an accurate approximate solution to the linear system~\eqref{lineareqbig}, it is desirable to solve the above linear system via a preconditioned symmetric quasi-minimal residual method (PSQMR)~\cite{FreundPSQMR1994} with the preconditioner computed based on the following analytical expression of $\cM^{-1}$:
\begin{eqnarray*}
	\cM^{-1} =
	\begin{pmatrix}
		M_{\rm sp}^{-1} - M_{\rm sp}^{-1}US^{-1}U^\top M_{\rm sp}^{-1} & M_{\rm sp}	^{-1}US^{-1} \\[5pt]
		S^{-1}U^\top M_{\rm sp}^{-1} & -S^{-1}
	\end{pmatrix},
\end{eqnarray*}
where $S = D^{-1}+U^\top M_{\rm sp}^{-1}U$.
It can be readily seen that for a given vector $(h_1;h_2)$, $\cM^{-1}(h_1;h_2) $ can be evaluated efficiently as follows:
\[
\lambda_1  \;=\; M_{\rm sp}^{-1}h_1,\quad
\lambda_2  \;=\; S^{-1}\left(U^\top \lambda_1-h_2 \right),\quad
\cM^{-1}(h_1;h_2)  \;=\;
(\lambda_1-M_{\rm sp}^{-1}U\lambda_2; \lambda_2).
\]		
However, when the size of the matrix $S$ (which can be twice the number of second-order cones) is large or there is no obvious sparsity structure in the linear system~\eqref{lineareq}, the aforementioned technique may be time consuming. In this case, we would apply the PSQMR directly to solve the system~\eqref{lineareq} with diagonal preconditioner.

Next, we consider the case when $H\neq 0$. We then need to solve the following linear system as described in Algorithm iSSN,
\begin{equation}
	\label{eq-linsys-Qsocp}
	M\left(
		\begin{array}{c}
			d_1 \\[5pt]
			d_2
		\end{array}
	\right) =
	\left(
		\begin{array}{cc}
			H + \sigma HVH &  -\sigma HVA^\top \\[5pt]
			-\sigma AVH & \epsilon I_m + \sigma AVA^\top
		\end{array}
	\right)
	\left(
		\begin{array}{c}
			d_1 \\[5pt]
			d_2
		\end{array}
	\right) =
	\left(
		\begin{array}{c}
			HR_1 \\[5pt]
			R_2
		\end{array}
	\right)
\end{equation}
such that $(d_1,d_2)\in \mathrm{Ran}(H)\times \mathbb{R}^m$ and
\begin{equation}
	\label{eq-residual}
	\norm{M\left(
		\begin{array}{c}
			d_1 \\[5pt]
			d_2
		\end{array}
	\right) - \left(
		\begin{array}{c}
			HR_1 \\[5pt]
			R_2
		\end{array}
	\right)}\leq \nu,
\end{equation}
where $\epsilon>0$, $\sigma > 0$, and $\nu > 0$ are given parameters, $R_1$ and $ R_2$ are given vectors; and $V\in \partial_B \Pi_{\mathcal{K}}[-\sigma \tilde{y}(x_1, x_2,y)]$ at the given point $(x_1, x_2, y)$. Given the fact that one requires the condition $d_1\in \mathrm{Ran}(H)$ to establish the convergence of Algorithm iSSN, however, in practice this condition may bring numerical issues in computing the Newton direction. Fortunately,  we can fully overcome this difficulty via solving the following simplified system
	\begin{equation}
		\label{eq-linsys-unsym}
		\widehat{M}\left(
		\begin{array}{c}
			\hat{d}_1 \\[5pt]
			\hat{d}_2
		\end{array}
		\right) =
		\left(
		\begin{array}{cc}
			I_n + \sigma VH &  -\sigma VA^\top \\[5pt]
			-\sigma AVH & \epsilon I_m + \sigma AVA^\top
		\end{array}
		\right)
		\left(
		\begin{array}{c}
			\hat{d}_1 \\[5pt]
			\hat{d}_2
		\end{array}
		\right) =
		\left(
		\begin{array}{c}
			R_1 \\[5pt]
			R_2
		\end{array}
		\right)
	\end{equation}
	such that $(\hat{d}_1, \hat{d}_2)\in \mathbb{R}^n\times \mathbb{R}^m$, with the residual
	\[
		\norm{
			\widehat{M}
			\left(
				\begin{array}{c}
					\hat{d}_1 \\[5pt]
					\hat{d}_2
				\end{array}
			\right) -
			\left(
				\begin{array}{c}
					R_1 \\[5pt]
					R_2
				\end{array}
			\right)
		} \leq \frac{1}{\max\{1, \lambda_{\max}(H)\}} \nu,
	\]
	where $\lambda_{\max}(H)$ is the maximum eigenvalue of $H$. Then simple calculations show that $(d_1, d_2) := (\Pi_{\mathrm{Ran}(H)}(\hat{d}_1), \hat{d}_2)$ solves \eqref{eq-linsys-Qsocp} satisfying \eqref{eq-residual}. Moreover, one can verify that $H\Pi_{\mathrm{Ran}(H)}(\hat{d}_1) = H\hat{d}_1$ and $\inprod{\Pi_{\mathrm{Ran}(H)}(\hat{d}_1)}{H\Pi_{\mathrm{Ran}(H)}(\hat{d}_1)} = \inprod{\hat{d}_1}{H\hat{d}_1}$. Using these facts and analyzing the proposed algorithm carefully, we can execute the proposed algorithm without computing $\Pi_{\mathrm{Ran}(H)}(\hat{d}_1)$ explicitly. Finally, to solve the linear system \eqref{eq-linsys-unsym}, we can apply a direct method via computing the sparse LU factorization of $\widehat{M}$ if it is sparse. Otherwise, we may use an iterative solver, such as the BICGSTAB method  in \cite{SaadIterative2003}.

\subsection{Settings for numerical experiments}
\label{subsec-settings}
In this subsection, we present the settings of our numerical experiments. We first set up the stopping criteria for  the proposed ALM based on the KKT conditions given in~\eqref{kkt-condition}. We define the following relative KKT residuals,
\begin{eqnarray*}
	& &	\Delta_1(x_1,y) :=
	\frac{\sqrt{\norm{Ay-b}^2+\norm{H(x_1-y)}^2}}{1+\norm{b}+\norm{H}_F},\quad \Delta_2(y,x_3) :=
	\frac{\norm{x_3 - \Pi_{\cK}(x_3-y)}}{1+\norm{y}+\norm{x_3}}, \\[5pt]
	& & \Delta_3(x_1,x_2,x_3):=
	\frac{\norm{-Hx_1 + A^\top x_2+x_3-c}}{1+\norm{c}}
\end{eqnarray*}
and the relative gap
\begin{eqnarray*}
	\Delta_4(x_1,x_2,y):=
	\frac
	{
		|{\rm pobj} - {\rm dobj}|
	}
	{
		1 +
		|{\rm pobj}| +
		|{\rm dobj}|
	},
\end{eqnarray*}
where ${\rm pobj}:= \frac{1}{2}\inprod{x_1}{Hx_1} - \inprod{b}{x_2}$ and ${\rm dobj}:= -\frac{1}{2}\inprod{y}{Hy} - \inprod{c}{y}$ are the objective function values for primal and dual problems, respectively. For any given termination tolerance ${\rm tol}$, which will be specified later, we terminate our ALM solver when
\begin{eqnarray}
\label{relativekkt}
\Delta^k := \max
\left\{
\Delta_1(x_1^k,y^k),\;
\Delta_2(y^k,x_3^k),\;
\Delta_3(x_1^k, x_2^k,x_3^k),\;
\Delta_4(x_1^k,x_2^k,y^k)
\right\} <
{\rm tol},
\end{eqnarray}
where $\{(x_1^k,x_2^k,x_3^k,y^k)\}$ is the sequence generated by the algorithm at the $k$-th iteration.

In our numerical experiments, we will consider both linear and convex quadratic SOCPs. For linear SOCPs, the solvers that we will benchmark against are the highly powerful commercial solver Mosek\footnote{\url{https://www.mosek.com/}} (version 9.1.7) and the efficient open source semidefinite-quadratic-linear programs (SQLP) solver SDPT3\footnote{\url{https://blog.nus.edu.sg/mattohkc/softwares/sdpt3/}}~\cite{Tutuncu2003sdpt3} (version 4.0). For the convex quadratic SOCPs, we apply our ALM solver to the problem with quadratic objective directly, while for Mosek and SDPT3, we solve the reformulated problem~\eqref{eq-reformulatedsocp}.

For the ALM, we set $ {\rm tol} = 10^{-8} $ and stop the algorithm whenever it returns a solution such that $\Delta^k$ defined in~\eqref{relativekkt} is less than ${\rm tol}$. Moreover, the maximum number of iterations for the ALM is set to be 100. Since Mosek solves a homogeneous self-dual model which uses different stopping criteria, we use its default settings. The solutions returned by Mosek and SDPT3 under the default settings are then extracted to compute the relative KKT residuals in~\eqref{relativekkt}. We observe that when the default settings are used, Mosek and SDPT3 provide similar levels of accuracy as ours in terms of relative KKT residuals defined in~\eqref{relativekkt}.

All the computational results are presented in tables. The column under ``{\rm it}'' reports the number of iterations for each algorithm. Note that for the column ``{\rm it(newton)}'', we report the number of ALM iterations and the total number of Newton systems solved in the ALM. In addition,  the column ``{\rm time}'' reports the computational time in seconds. For the column ``{\rm kkt}'', we report the relative KKT residuals returned by each solver.

All experiments are run in MATLAB R2018b on a workstation with Intel Xeon processor E5-2680v3 at 2.50GHz (this processor has 12 cores and 24 threads) and 128GB of RAM, equipped with 64-bit Windows 10 operating system. Since Mosek can take advantage of multi-threading, we observe that under this operating system, the number of threads used by Mosek is 12, whereas for SDPT3 and our solver, only one thread is observed to be used by MATLAB.

\subsection{Application to minimal enclosing ball problems}
\label{subsec-meb}
In this subsection, we consider the MEB whose goal is to compute a ball of smallest radius that encloses a given set of balls (including points). The MEB problem is a member of the family of {\it minimum containment problems}, and it is also known as the {\it smallest enclosing ball problem} and {\it minimal bounding sphere problem}, etc. We refer the reader to~\cite{ZhouMEB2005} for an introduction of MEB problems.

Let $B_i$ denote a ball in $\R^d$ with center $c_i$ and radius $r_i\geq 0$, i.e.,
$$
B_i = \left\{ z\in \R^d\;:\; \|z-c_i\|\leq r_i \right\}.
$$
Given a set of distinct balls $\cB= \left\{ B_1,B_2,\cdots,B_m \right\}\subseteq \R^d$, the MEB problem is equivalent to the following unconstrained convex minimization problem:
\begin{eqnarray}
\label{meb-unconst}
\min_{z\in \R^d}\;\max_{1\leq i \leq m}\; \left\{ \|z-c_i\|+r_i \right\}.
\end{eqnarray}
Since the objective function is nonsmooth, the usual gradient-based methods are not applicable. However, if we denote $ n = m(d+1),\;\mathbf{x_2} = (r;z)\in \R^{d+1} $, and
$$
\mathbf{x_3} = (t_1;s_1;t_2;s_2;\cdots;t_m;s_m)\in \R^{n},
$$
problem (~\ref{meb-unconst}) can be reformulated into
a linear SOCP problem of  the form~\eqref{QLP} (see, e.g.,~\cite{ZhouMEB2005} for such a reformulation):
\begin{eqnarray}
\label{meb-dsocp}
({\rm MEB})\quad
\max_{\mathbf{x_2},\bm{x_3}} \;
\left\{
\left.
\mathbf{b}^\top \mathbf{x_2} \;\right\vert\; \mathbf{A}^\top \mathbf{x_2} + \mathbf{x_3} = \mathbf{c},\; \mathbf{x_3}\in \cK
\right\},
\end{eqnarray}
where
\begin{eqnarray*}
	& & \mathbf{b} = -(1;0;\cdots;0)\in \R^{d+1}, \quad
	\mathbf{c} = -(r_1;c_1;r_2;c_2;\cdots; r_m;c_m)\in \R^{n},\\[5pt]
	& &	\mathbf{A} = - \underbrace{
		\begin{pmatrix}
			I_{d+1}& \cdots & I_{d+1}
	\end{pmatrix}}_{m}\in \R^{(d+1)\times n},
	\cK = \underbrace{\cK^{d+1}\times \cdots\times \cK^{d+1}}_{m}\subseteq \R^n.
\end{eqnarray*}
Then we can apply the proposed ALM to solve the MEB problem. To achieve a fast local convergence rate for the semismooth Newton method when solving the ALM subproblems, we need the constraint nondegeneracy condition. For the MEB problem, by considering its geometrical properties, we are able to show that the constraint nondegeneracy condition holds at any feasible solution of the dual problem of MEB.

\begin{theorem}
	\label{thm-nondegmeb}
	Assume that $\cB= \left\{ B_1,B_2,\cdots,B_m \right\}\subset \R^d$ with  $m>1$. Then the constraint nondegeneracy condition holds at any feasible solution $\bar{\mathbf{y}}$ for the  dual problem of (MEB), i.e.,
	\begin{eqnarray*}
		\mathbf{A}(\text{lin}(\cT_{\cK}(\bar{\mathbf{y}}))) =
		\R^{d+1}\quad \forall\, \bm{A}\bar{\bm{y}} = \bm{b}.
	\end{eqnarray*}
\end{theorem}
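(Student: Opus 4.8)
The plan is to verify the constraint nondegeneracy condition
$\mathbf{A}(\mathrm{lin}(\cT_{\cK}(\bar{\mathbf{y}}))) = \R^{d+1}$
directly by exploiting the explicit block structure of $\mathbf{A}$ and $\cK$ for the MEB problem. Write $\bar{\mathbf{y}} = (\bar{y}_1;\cdots;\bar{y}_m)$ with each $\bar{y}_i = ((\bar y_i)_0; (\bar y_i)_t) \in \cK^{d+1}\subseteq \R\times\R^d$. Since $\cK = \cK^{d+1}\times\cdots\times\cK^{d+1}$ is a product, its tangent cone (hence the lineality space of the tangent cone) splits as a product: $\mathrm{lin}(\cT_{\cK}(\bar{\mathbf{y}})) = \prod_{i=1}^m \mathrm{lin}(\cT_{\cK^{d+1}}(\bar{y}_i))$. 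Recalling that $\mathbf{A}^\top\mathbf{x_2} = (\mathbf{x_2};\cdots;\mathbf{x_2})$ (up to sign), the action of $\mathbf{A}$ is $\mathbf{A}\mathbf{u} = -\sum_{i=1}^m u_i$ for $\mathbf{u}=(u_1;\cdots;u_m)$. Therefore the condition reduces to showing
\[
\sum_{i=1}^m \mathrm{lin}(\cT_{\cK^{d+1}}(\bar{y}_i)) = \R^{d+1}.
\]

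\textbf{Key steps.} First I would record the three cases for $\mathrm{lin}(\cT_{\cK^{d+1}}(\bar y_i))$ depending on where $\bar y_i$ sits: if $\bar y_i \in \mathrm{int}\,\cK^{d+1}$ then the tangent cone is all of $\R^{d+1}$ and its lineality space is $\R^{d+1}$; if $\bar y_i = 0$ then $\cT_{\cK^{d+1}}(0) = \cK^{d+1}$, whose lineality space is $\{0\}$; and if $\bar y_i \in \mathrm{bd}\,\cK^{d+1}\setminus\{0\}$, then $\cT_{\cK^{d+1}}(\bar y_i)$ is a half-space $\{v : \langle v, \hat{y}_i\rangle \le 0\}$ where $\hat y_i = ((\bar y_i)_0; -(\bar y_i)_t)$ is the ``reflected'' boundary point, and its lineality space is the hyperplane $\hat y_i^{\perp}$, which has dimension $d$. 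Next, I would argue that the feasibility constraint $\mathbf{A}\bar{\mathbf{y}} = \mathbf{b}$, i.e. $-\sum_i \bar y_i = -(1;0;\cdots;0)$, forces $\sum_i (\bar y_i)_0 = 1 > 0$, so at least one $\bar y_i$ has $(\bar y_i)_0 > 0$; in particular $\bar{\mathbf{y}} \ne 0$, and not every block can be the zero block. So the sum $\sum_i \mathrm{lin}(\cT_{\cK^{d+1}}(\bar y_i))$ contains at least one subspace of dimension $d$ or $d+1$. The remaining task is to show that when the ``best'' block contributes only a $d$-dimensional hyperplane $\hat y_i^\perp$ (the case where every nonzero block is on the boundary), some other block contributes a direction outside that hyperplane, so that the sum fills up $\R^{d+1}$.

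\textbf{Main obstacle.} The crux is the last case: suppose every nonzero block $\bar y_i$ lies on $\mathrm{bd}\,\cK^{d+1}\setminus\{0\}$, contributing the hyperplane $\hat y_i^\perp$. If all these hyperplanes coincided, say $\hat y_i^\perp = \hat y^\perp$ for a common $\hat y$, then all reflected boundary points $\hat y_i$ would be positive multiples of a single $\hat y = (\hat y_0; \hat y_t)$, hence all $\bar y_i = \lambda_i(\hat y_0; -\hat y_t)$ for scalars $\lambda_i > 0$ (the zero blocks being $\lambda_i = 0$). But then $\sum_i \bar y_i = (\sum_i \lambda_i)(\hat y_0; -\hat y_t)$, and matching this against $\mathbf{b}$-feasibility $-\sum_i\bar y_i = -(1;0;\cdots;0)$ forces the spatial part $-\hat y_t$ to be proportional to $0 \in \R^d$, i.e. $\hat y_t = 0$, so $\hat y = (\hat y_0; 0)$ with $\hat y_0 > 0$, meaning each $\bar y_i$ is a \emph{positive} multiple of $(1;0)\in\mathrm{int}\,\cK^{d+1}$ — contradicting $\bar y_i \in \mathrm{bd}\,\cK^{d+1}$. (Here is where the hypothesis $m > 1$ together with distinctness of the balls, and the geometry of the MEB constraints, gets used to rule out the degenerate configuration; I would state precisely which balls being distinct rules out the collinear-on-the-boundary scenario.) Hence at least two of the hyperplanes $\hat y_i^\perp$ are distinct, so $\hat y_{i_1}^\perp + \hat y_{i_2}^\perp = \R^{d+1}$, and a fortiori $\sum_i \mathrm{lin}(\cT_{\cK^{d+1}}(\bar y_i)) = \R^{d+1}$, which is the desired conclusion after applying $\mathbf{A}$. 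I expect the bookkeeping to hide a subtlety about whether ``all reflected points proportional'' really forces the contradiction without an extra distinctness argument, so that reduction is where I would be most careful.
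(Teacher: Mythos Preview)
Your proposal is correct and follows essentially the same route as the paper's proof: reduce to showing $\sum_i \mathrm{lin}(\cT_{\cK^{d+1}}(\bar y_i)) = \R^{d+1}$, dispose of the interior case trivially, and in the remaining case (all nonzero blocks on the boundary) argue by contradiction that the hyperplanes $\hat y_i^\perp$ cannot all coincide, using only the feasibility constraint $\sum_i \bar y_i = (1;0;\dots;0)$.

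One clarification: your parenthetical hedge about needing ``distinctness of the balls'' or the hypothesis $m>1$ at the contradiction step is misplaced. Neither is actually used. The contradiction comes purely from feasibility: if all nonzero $\hat y_i$ are proportional (including the case of a single nonzero block), then $\sum_i \bar y_i$ is a positive multiple of a fixed boundary direction, and equating this to $(1;0;\dots;0)$ forces the spatial part of that direction to vanish, contradicting the boundary assumption. The paper's proof likewise makes no appeal to the balls $B_i$ being distinct; it uses exactly the same feasibility argument (phrased as ``if all $\bar y_i$ are parallel then $v_i=0$'') to conclude that two of the $\bar y_i$ are linearly independent, which is equivalent to two of the hyperplanes $\hat y_i^\perp$ being distinct. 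So you can drop that caveat and the argument goes through cleanly.
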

\begin{proof}
	Let $\bar{\mathbf{y}} = (\bar{\mathbf{y}}_1; \ldots; \bar{\mathbf{y}}_m)\in \R^n$ be any feasible solution, i.e.,  $\bm{A}(\bar{\bm{y}}) = \bm{b}$ and $\bar{\mathbf{y}}_i = (\alpha_i; v_i) \in \cK^{d+1}$ for
	$i=1,\ldots,m.$
	If there exists $i$ such that $\alpha_i>\norm{v_i}$, then the conclusion is trivial since $\text{lin}(\cT_{\cK^{d+1}}(\bar{\mathbf{y}_i}))  = \R^{d+1}$. Assume without loss of generality that for all $1\leq i\leq m_0$, $m_0\leq m$, we have that $\alpha_i = \norm{v_i}>0$ and that for all $ i> m_0$, $\alpha_i = \norm{v_i}=0$.
	
	We claim that $m_0\geq 2$. If $m_0 = 1$, then by the feasibility condition, we have that $\alpha_1 = 1$, and $v_1 = 0$, but this is impossible since $\bar{\mathbf{y}}_1 = (\alpha_1;v_1)\in \partial \cK^{d+1}$. Thus $m_0\geq 2.$ Next, we show that there exist $1\leq i<j\leq m_0$ such that $\bar{\mathbf{y}}_i = (\alpha_i;v_i)$ and $\bar{\mathbf{y}}_j=(\alpha_j;v_j)$ is linearly independent. Suppose that this is not true. Then all the vectors $\bar{\mathbf{y}}_i$, $1\leq i\leq m_0$,  are parallel, and  the feasibility condition implies that $v_i = 0$ for all $1\leq i\leq m_0$. The latter contradicts the assumption that $\bar{\mathbf{y}}_i$, $1\leq i\leq m_0$, are nonzero vectors on the boundary of $\cK^{d+1}$. Now for such $i$ and $j$, the linearity spaces are given by
	\begin{eqnarray*}
		\text{lin}(\cT_{\cK^{d+1}}(\alpha_i;v_i)) =
		(\alpha_i;-v_i)^\perp,\quad \text{lin}(\cT_{\cK^{d+1}}(\alpha_j;v_j)) =
		(\alpha_j;-v_j)^\perp.
	\end{eqnarray*}
	For the primal constraint nondegeneracy condition to hold, we need to show that
	\begin{eqnarray*}
		\textrm{lin}(\cT_{\cK^{d+1}}(\alpha_i;v_i)) +
		\textrm{lin}(\cT_{\cK^{d+1}}(\alpha_j;v_j)) = \R^{d+1}.
	\end{eqnarray*}
	However, the aforementioned condition is equivalent to
	\begin{eqnarray*}
		\textrm{span}\left\{(\alpha_i;-v_i)\right\}\cap
		\textrm{span}\left\{(\alpha_j;-v_j)\right\} =
		\left\{ 0\right\},
	\end{eqnarray*}
	which holds true because of the linear independence of the vectors $\bar{\mathbf{y}}_i$ and $\bar{\mathbf{y}}_j$. This completes the proof.
\end{proof}

Next, we evaluate the performance of the proposed ALM against SDPT3 and Mosek. Let $\{\bar{p}_i\}_{i\geq 0}$ denote the following pseudo-random sequence:
\begin{eqnarray*}
	p_0 = 7,\quad
	p_{i+1} = (445p_i+1)\;{\bf mod}\;4096,\quad
	\bar{p}_i = \frac{p_i}{40.96},\quad
	i = 1,2,\cdots.
\end{eqnarray*}
Then the elements of $c_i$, $ i = 1,2,\cdots,m $ are successively set to $\bar{p}_1,\bar{p}_2,\cdots$, in the order
\begin{eqnarray*}
	r_1,(c_1)_1,\cdots,(c_1)_d,\cdots,r_m,(c_m)_1,\cdots, (c_m)_d.
\end{eqnarray*}
Note that same testing instances were also used in~\cite{ZhouMEB2005}. The associated computational results are presented in Table~\ref{tab-meb}. From these results, we observe that SDPT3, Mosek, and the ALM solve all the instances successfully. Our ALM outperforms the other methods in the sense that the computational time is  much smaller. Mosek outperforms SDPT3 but becomes less efficient when the problem size is large. Indeed, Mosek is about two times faster than SDPT3 while the ALM is at least two times faster than Mosek when the problem size is large. Thus we can conclude that the proposed ALM is highly efficient and robust for MEB problems.

\begin{table}[H]
	\begin{center}
		\begin{footnotesize}
			\caption{Computational results for MEB problems with various value of $m$ and $d$.}
			\label{tab-meb}
			\begin{tabular}{|c|r|r|r|} \hline
				& \mc{1}{c|}{SDPT3}  &  \mc{1}{c|}{Mosek} &  \mc{1}{c|}{ALM} \\ \hline
				m,\;d &  \mc{1}{c|}{it$|$time$|$kkt}	&  \mc{1}{c|}{it$|$time$|$kkt}
				&  \mc{1}{c|}{it(newton)$|$time$|$kkt} \\ \hline
				1000,   400 	& 21 $|$ 5.7 $|$ 9.6e-09 	&       13 $|$ 2.7 $|$ 3.5e-09 	&    7(40) $|$ 1.8 $|$ 2.9e-09\\
				1000,   800 	& 22 $|$ 13.1 $|$ 6.5e-09 	&      14 $|$ 5.2 $|$ 1.6e-09 	&    7(44) $|$ 3.0 $|$ 1.9e-09\\
				1000,  1200		& 21 $|$ 19.8 $|$ 8.0e-09 	&      12 $|$ 8.2 $|$ 1.2e-09 	&    7(42) $|$ 4.1 $|$ 1.3e-09\\
				1000,  1600 	& 21 $|$ 29.7 $|$ 7.3e-09 	&      11 $|$ 11.7 $|$ 7.3e-09 	&   6(39) $|$ 5.3 $|$ 3.2e-09 \\
				1000,  2000	 	& 19 $|$ 34.3 $|$ 8.1e-09 	&      13 $|$ 17.7 $|$ 4.3e-09	&   6(37) $|$ 6.0 $|$ 2.3e-09 \\
				8000,   100 	& 25 $|$ 13.4 $|$ 7.6e-09 	&      17 $|$ 5.2 $|$ 2.4e-09 	&    7(45) $|$ 3.6 $|$ 9.1e-09\\
				16000,   100 	& 25 $|$ 26.1 $|$ 7.1e-09 	&     19 $|$ 11.0 $|$ 2.2e-09 	&   8(48) $|$ 7.5 $|$ 4.1e-09 \\
				32000,   100 	& 25 $|$ 54.6 $|$ 8.4e-09 	&     20 $|$ 23.7 $|$ 1.7e-09 	&   7(44) $|$ 14.1 $|$ 2.2e-09 \\
				64000,   100 	& 28 $|$ 119.7 $|$ 6.7e-09 	&    20 $|$ 49.3 $|$ 4.9e-09 	&   7(45) $|$ 28.6 $|$ 1.9e-09 \\
				128000,   100 	& 30 $|$ 277.3 $|$ 5.8e-09 	&   18 $|$ 97.9 $|$ 6.4e-09 	&   6(43) $|$ 54.7 $|$ 3.9e-09 \\
				256000,   100 	& 30 $|$ 645.3 $|$ 1.0e-08 	&   20 $|$ 377.6 $|$ 1.2e-08 	&  6(45) $|$ 118.4 $|$ 3.6e-09  \\
				512000,   100 	& 31 $|$ 1429.2 $|$ 7.0e-09 &  20 $|$ 1306.5 $|$ 1.0e-08 	&  5(39) $|$ 212.4 $|$ 9.1e-09  \\
				3000,  1000 	& 21 $|$ 54.8 $|$ 7.9e-09 	&      14 $|$ 20.1 $|$ 4.4e-09	&    7(43) $|$ 10.5 $|$ 3.1e-09\\
				3000,  2000 	& 22 $|$ 123.1 $|$ 9.2e-09 	&     15 $|$ 51.9 $|$ 1.9e-10 	&   7(46) $|$ 21.5 $|$ 6.0e-09 \\
				3000,  4000 	& 22 $|$ 283.1 $|$ 5.9e-09 	&     11 $|$ 128.8 $|$ 1.5e-10 	&  6(40) $|$ 36.4 $|$ 4.0e-09  \\
				3000,  8000 	& 20 $|$ 558.6 $|$ 7.9e-09 	&     12 $|$ 277.2 $|$ 5.0e-09 	&  6(39) $|$ 71.2 $|$ 6.1e-09  \\
				3000, 16000 	& 20 $|$ 1334.9 $|$ 5.4e-09 &    12 $|$ 592.7 $|$ 2.3e-10 	&   6(44) $|$ 164.6 $|$ 1.6e-09 \\
				\hline
			\end{tabular}
		\end{footnotesize}		
	\end{center}
\end{table}

\subsection{Application to trust-region subproblems}
\label{subsec-trs}
We consider in this subsection SOCPs arising from the classical trust-region subproblem,
\begin{eqnarray}
\label{eq-classictrs}
\min_{y\in \R^d}\;
\left.
\left\{
\frac{1}{2}\inprod{y}{Hy} + \inprod{c}{y}\;\right\vert\; \norm{y}\leq 1
\right\},
\end{eqnarray}
where $H$ is symmetric but not necessarily positive semidefinite. It was proven in \cite[Theorem 5]{NamTRS2017} that when $\lambda_H<0$ (the smallest eigenvalue of $H$), a tight convex relaxation of the classical TRS~\eqref{eq-classictrs} can be derived and is given by
\begin{eqnarray}
\label{eq-convextrs}
\min_{y\in \R^d}\;
\left.
\left\{
\frac{1}{2}\inprod{y}{(H-\lambda_HI_d)y} + \inprod{c}{y} + \lambda_H\;\right\vert \; \norm{y}\leq 1
\right\}.
\end{eqnarray}
Problem~\eqref{eq-convextrs} can be reformulated (ignoring the constant term $\lambda_H$ in the objective) to the form of {\rm (D)},
\begin{eqnarray}
\label{eq-trssocp}
\min_{\bm{y}}\;
\left.
\left\{
\frac{1}{2}\inprod{\bm{y}}{\bm{H}\bm{y}} +
\inprod{\bm{c}}{\bm{y}} \;\right\vert \;\bm{A} \bm{y} = \bm{b},\; \bm{y}\in \cK^{d+1}
\right\},
\end{eqnarray}
where $\bm{y}:=(s,y)^\top\in \R^{d+1}$, $\bm{c} := (0,c)^\top\in \R^{d+1}$, $\bm{b} := 1$, $ \bm{A} := (1,0)\in \R^{1\times (d+1)}  $, and
\begin{eqnarray*}
	\bm{H} :=
	\left(
	\begin{array}{cc}
		0 & 0 \\[5pt]
		0 & H-\lambda_HI_d
	\end{array}
	\right)\in \S_+^{d+1}.
\end{eqnarray*}

To solve a problem of the form \eqref{eq-trssocp} by Mosek and SDPT3, we need also to reformulate it as a linear SOCP as we did in the introduction. Specifically, problem \eqref{eq-trssocp} is equivalent to the following problem with additional affine and rotated quadratic cone constraints:
\begin{eqnarray*}
	\min_{\bar{\bm{y}}}\;
	\left.
	\left\{
	\inprod{\bar{\bm{c}}}{\bm{\bar{y}}} \;\right\vert\; \bar{\bm{A}}\bar{\bm{y}} = \bar{\bm{b}},\; \bar{\bm{y}}\in \cK^{d+1}\times \cK_r^{d+2}
	\right\},
\end{eqnarray*}
where $\bar{\bm{y}}:=(\bm{y},t,q,z)^\top\in \R^{2d+3}$ with $t,q\in \R,\; z\in \R^d$, $\bar{\bm{c}}:=(\bm{c},1,0)^\top\in \R^{2d+3}$, $\bar{\bm{b}}:=(\bm{b},0,1)\in \R^{d+2}$ and
\begin{eqnarray*}
	\bar{\bm{A}}:=
	\left(
	\begin{array}{cccc}
		\bm{A} & 0 & 0 & 0        \\[5pt]
		R 	   & 0 & 0 & -I_{d+1} \\[5pt]
		0 & 0  & 1 & 0
	\end{array}
	\right)	\in \R^{(d+2)\times (2d+3)}
\end{eqnarray*}
with $\bm{H} = R^\top R$.

Next we compare the performance of our SOCP solver with Mosek on a class of synthetic data. In particular, we randomly generate the input date via the following MATLAB scripts:
\begin{verbatim}
    P = rand(d,d); h = (P*diag(randn(d,1)))*P';
    lamh = eigs(h,1,'smallestreal');
    H = [zeros(1,d+1);[zeros(d,1),h-lamh*eye(d)]];
    c = [0;randn(d,1)]; b = 1; A = [1,zeros(1,d)]; R = H^0.5;
\end{verbatim}
The computational results are presented in Table~\ref{tab-trs}. In the table, we also report the minimum eigenvalue of the data matrix $H$ (corresponding to $h$ in the above MATLAB script), which is denoted by the term $\lambda_H$. From the table, we can see that our ALM solver outperforms Mosek and SDPT3 in terms of computational time. In most cases, the solution quality returned by our solver is much better than that of Mosek and SDPT3. These results also indicate that dealing with the quadratic objective directly is indeed much more efficient.

\begin{table}[h]
	\begin{center}
		\begin{footnotesize}
			\caption{Computational results for trust-region subproblem on synthetic data.}
			\label{tab-trs}
			\begin{tabular}{|c|c|r|r|r|} \hline
				\multicolumn{2}{|c|}{} &\mc{1}{c|}{Mosek} & \mc{1}{c|}{SDPT3} & \mc{1}{c|}{ALM}
				\\ \hline
				n & $\lambda_H$ & \mc{1}{c|}{it$|$time$|$kkt} & \mc{1}{c|}{it$|$time$|$kkt}
				& \mc{1}{c|}{it(newton)$|$time$|$kkt}
				\\ \hline
				1000 & -9.9e+03 &  3 $|$ 0.9 $|$ 1.1e-09 &  9 $|$ 2.2 $|$ 1.1e-07 & 6(13) $|$ 0.1 $|$ 1.7e-09\\
				2000 & -1.2e+04 &  3 $|$ 2.4 $|$ 6.6e-08 &  8 $|$ 8.3 $|$ 2.6e-06 & 5(14) $|$ 0.1 $|$ 3.6e-09\\
				3000 & -1.3e+04 &  3 $|$ 6.0 $|$ 1.9e-06 &  9 $|$ 22.1 $|$ 9.9e-08 & 6(16) $|$ 0.3 $|$ 6.1e-10\\
				4000 & -1.5e+04 &  3 $|$ 11.9 $|$ 4.0e-09 &  9 $|$ 40.5 $|$ 1.1e-07 & 6(15) $|$ 0.6 $|$ 5.9e-11\\
				5000 & -1.8e+04 &  3 $|$ 18.8 $|$ 3.5e-09 &  9 $|$ 66.6 $|$ 1.2e-07 & 6(13) $|$ 0.8 $|$ 1.0e-09\\
				6000 & -4.1e+04 &  3 $|$ 29.4 $|$ 9.9e-08 &  9 $|$ 106.9 $|$ 1.2e-07 & 6(13) $|$ 1.0 $|$ 3.7e-10\\
				7000 & -5.3e+04 &  3 $|$ 40.7 $|$ 2.4e-07 &  9 $|$ 149.8 $|$ 1.6e-07 & 6(13) $|$ 1.4 $|$ 4.6e-11\\
				8000 & -6.6e+04 &  3 $|$ 56.6 $|$ 1.3e-06 &  9 $|$ 205.8 $|$ 1.0e-07 & 6(15) $|$ 2.0 $|$ 3.2e-11\\
				9000 & -1.4e+05 &  3 $|$ 72.5 $|$ 1.4e-07 &  9 $|$ 269.4 $|$ 1.8e-07 & 5(10) $|$ 1.7 $|$ 4.0e-09\\
				10000 & -1.2e+05 &  3 $|$ 88.1 $|$ 7.5e-10 &  9 $|$ 292.8 $|$ 5.5e-08 & 5(11) $|$ 2.1 $|$ 8.1e-10\\
				\hline
			\end{tabular}
		\end{footnotesize}
	\end{center}
\end{table}

\subsection{Application to square-root Lasso problems}
\label{subsec-srLasso}
In this experiment, we consider the following square-root Lasso model proposed in~\cite{Bellonisrlasso2011}:
\begin{eqnarray}
\label{eq-srlasso}
\min_{x\in \R^d}\; \norm{Bx-w}+\lambda \norm{x}_1,
\end{eqnarray}
$B\in \R^{m\times d}$  and $w\in \R^m$ are given data, $m$ is
the sample size, and $d$ is the dimension of the features.

As explained in~\cite{Bellonisrlasso2011}, the square-root Lasso model is  advantageous over the classical Lasso model. When dealing with noise that follows a Gaussian distribution $\cN(0,\sigma^2)$, the square-root Lasso model guarantees a near-oracle performance. Moreover, for the square-root Lasso model, one does not need to know an estimate of the standard deviation $\sigma$ in advance, while such an estimate of $\sigma$ is needed in the classical Lasso model. However, it is nontrivial to estimate the standard deviation when the dimension of features, $d$, is much larger than the sample size, $ m $. Therefore, the square-root Lasso model is in some sense more useful.

It is outside the scope of this paper to compare the empirical performance of different models from statistical perspective. Here  we focus on the numerical aspects of solving the optimization problem~\eqref{eq-srlasso} by reformulating it into an SOCP of the form~\eqref{QLP}. Hence, we only compare the performance of our proposed ALM against other general SOCP solvers but not the specialized square-root Lasso solvers such as the one in~\cite{Tangsqlasso2019}.

As stated in~\cite[Section 4]{Bellonisrlasso2011}, problem~\eqref{eq-srlasso} can be equivalently reformulated as a standard SOCP. Indeed, we note that for any real number $a$, we have $|a| = a_++a_{-}$ and $a = a_+-a_{-}$, where $a_+$ and $a_{-}$ denote the positive and negative parts of $a$, respectively. Therefore, we can write $x = p-q$, with $p,\,q\in \R_+^d$ and thus,
\begin{eqnarray*}
	\norm{Bx-w}+\lambda \norm{x}_1 =
	\norm{Bp-Bq-w}+\lambda e^\top (p+q),\quad e =
	\begin{pmatrix}
		1&\cdots&1
	\end{pmatrix}^\top \in\R^d.
\end{eqnarray*}
Now let $z = Bp-Bq-w$. Then~\eqref{eq-srlasso} is equivalent to
\begin{eqnarray}
\label{eq-srlassosocp}
\min_{(t,z),p,q}\;
\left.
\left\{
t+\lambda e^\top (p+q)\;\right\vert\; Bp-Bq-z = w,\;(t,z)\in \cK^{m+1},\;
p,\,q\in \R_+^d
\right\},
\end{eqnarray}
where $\cK^{m+1}$ is the second order cone in $\R^{m+1}$.
Denote $ \mathbf{y} = (p,q,t,z)^\top \in \R^{2d+m+1} $ and
\begin{eqnarray*}
	\bm{b}&:=&w\in\R^m,\quad
	\mathbf{c} := (\lambda e,\lambda e, 1, 0)^\top\in \R^{2d+m+1} ,\\
	\mathbf{A} &:=&
	\begin{pmatrix}
		B & -B & 0 & -I_m
	\end{pmatrix}\in \R^{m\times (2d+m+1)}.
\end{eqnarray*}
Then we obtain a standard SOCP in the form of the  dual problem of \eqref{QLP}:
\begin{eqnarray}
\label{srLasso-socp}
({\rm srLasso})\quad
\min_{\mathbf{y}} \;
\Big\{
\mathbf{c}^\top \mathbf{y}  \;\Big\vert\; \mathbf{A} \mathbf{y}  =
\mathbf{b},\; \mathbf{y} \in
\cK := \R_+^d\times \R_+^d\times \cK^{m+1}
\Big\}.
\end{eqnarray}

Next, we would test the reformulated problem \eqref{srLasso-socp} using SDPT3, Mosek, and our linear SOCP solver on a collection of UCI dataset\footnote{\url{https://archive.ics.uci.edu/}} which provides the data $B$ and $w$. For the choice of the regularization parameter, we follow the recent work of Tang et al.~\cite{Tangsqlasso2019}, where they adopted a 10-fold cross validation to estimate the best regularization parameter. In particular, we set the parameter $\lambda = c_0\Phi^{-1}(1-\frac{1}{40n})\lambda_c$, with $c_0 = 1.1$.

The choice of $\lambda_c$ and computational results are both presented in Table~\ref{tab-srLasso}. From the table, we observe that the three SOCP solvers can successfully solve all the instances. In terms of efficiency, we can see that the ALM has better performance than Mosek while SDPT3 is less efficient.

\footnotesize
\begin{table}[H]
	\caption{Computational results for square-root Lasso problems on UCI dataset.} \label{tab-srLasso}
	\centering
	\resizebox{0.95\textwidth}{!}{
		\begin{tabular}[!]{|c|c|c|r|r|r|}
			\hline
			\multicolumn{3}{|c|}{}	& \mc{1}{c|}{SDPT3}  & \mc{1}{c|}{Mosek} & \mc{1}{c|}{ALM}  \\ \hline
			problem & $\lambda_c$ & nnz &
			\mc{1}{c|}{it$|$time$|$kkt} & \mc{1}{c|}{it$|$time$|$kkt}
			& \mc{1}{c|}{it(newton)$|$time$|$kkt} \\ \hline
			$ \underset{(3308,150358)}{\rm E2006.test} $              & 0.107 & 1 & 13 $|$ 102.8 $|$ 8.3e-10 &14 $|$ 16.6 $|$ 3.1e-11 &4( 7) $|$ 3.8 $|$ 2.7e-09 \\
			$ \underset{  (74,201376)}{\rm pyrim.scaled.expanded5} $  & 0.619 &  48 & 42 $|$ 32.4 $|$ 2.6e-10 &26 $|$ 11.8 $|$ 1.9e-09 & 4(61) $|$ 10.2 $|$ 2.5e-09 \\
			$ \underset{(4177,6435)}{\rm abalone.scale.expanded7} $   & 0.020 & 32 & 22 $|$ 83.1 $|$ 2.2e-09 &14 $|$ 44.1 $|$ 6.3e-10 &12(37) $|$ 23.2 $|$ 5.0e-09 \\\
			$ \underset{ (252,116280)}{\rm bodyfat.scale.expanded7} $ & 0.067 &  15 & 34 $|$ 42.7 $|$ 3.4e-10 &20 $|$ 34.1 $|$ 8.5e-09 & 4(57) $|$ 5.9 $|$ 7.5e-09 \\
			$ \underset{ (506,77520)}{\rm housing.scale.expanded7} $  & 0.433 &  52 & 30 $|$ 54.9 $|$ 1.2e-09 &20 $|$ 47.2 $|$ 4.8e-09 & 8(52) $|$ 4.6 $|$ 2.1e-09 \\
			$ \underset{ (392,3432)}{\rm mpg.scale.expanded7} $       & 0.253 &  28 & 23 $|$ 2.0 $|$ 5.5e-10 &13 $|$ 1.3 $|$ 1.2e-09 &10(35) $|$ 0.7 $|$ 8.4e-09 \\
			$ \underset{(3107,5005)}{\rm space.ga.scale.expanded9} $  & 0.058 & 16 & 22 $|$ 42.5 $|$ 3.1e-09 &11 $|$ 22.0 $|$ 1.3e-08 & 6(27) $|$ 7.9 $|$ 2.5e-10 \\  \hline
		\end{tabular}
	}
\end{table}
\normalsize

\subsection{Numerical experiments on Dimacs Challenge problems}
\label{subsec-random}
In this subsection, we test each algorithm on the linear SOCPs in DIMACS Challenge\footnote{\url{http://archive.dimacs.rutgers.edu/Challenges/Seventh/Instances/}}. These  instances are commonly used to evaluate the efficiency and accuracy of linear SOCP solvers, and they are quite challenging to solve since many of the instances are highly degenerate.

The computational results are presented in Table~\ref{tab-dimacs}. From the table, we observe that the three methods are able to solve all the instances to the desirable accuracy except for the last few instances. For the computational time, we see that ALM takes a longer time  than Mosek and SDPT3 for solving many of the instances, especially the last few instances for which the ALM takes over 1000 Newton iterations to converge. Those instances, as far as we know, are highly degenerate, and it is the degeneracy that causes the slow convergence of the semismooth Newton method. This observation indicates that the ALM may perform poorly on degenerate problems. Finally, based on the presented numerical results, SDPT3 is also observed to be a highly efficient and robust solver for the {DIMACS} Challenge problems.

\begin{table}[h]
	\begin{center}
		\begin{footnotesize}
			\caption{Computational results on DIMACS Challenge problems.}
			\label{tab-dimacs}{
				\begin{tabular}{|c|r|r|r|} \hline
					& \mc{1}{c|}{SDPT3} & \mc{1}{c|}{Mosek} & \mc{1}{c|}{ALM} \\ \hline
					Problem & \mc{1}{c|}{it$|$time$|$kkt} & \mc{1}{c|}{it$|$time$|$kkt} & \mc{1}{c|}{it(newton)$|$time$|$kkt} \\ \hline
					nb & 22 $|$ 0.4 $|$ 3.1e-09 & 10 $|$ 0.4 $|$ 8.1e-09 & 11(46) $|$ 1.0 $|$ 2.8e-12\\
					nbL1 & 30 $|$ 4.0 $|$ 1.7e-09 & 12 $|$ 0.3 $|$ 1.3e-09 & 22(62) $|$ 4.9 $|$ 1.5e-12\\
					nbL2bessel & 20 $|$ 0.4 $|$ 9.6e-10 &  8 $|$ 0.2 $|$ 2.4e-13 &  8(15) $|$ 0.3 $|$ 1.9e-09\\
					nbL2 & 15 $|$ 0.3 $|$ 3.1e-09 &  8 $|$ 0.3 $|$ 2.2e-10 & 11(49) $|$ 1.0 $|$ 3.8e-10\\
					nql30new & 26 $|$ 1.0 $|$ 4.3e-10 & 16 $|$ 0.3 $|$ 1.8e-10 & 38(104) $|$ 1.7 $|$ 9.6e-09\\
					nql60new & 27 $|$ 4.5 $|$ 1.9e-10 & 17 $|$ 1.0 $|$ 9.2e-11 & 38(109) $|$ 7.7 $|$ 7.9e-09\\
					nql180new & 33 $|$ 40.4 $|$ 6.1e-11 & 21 $|$ 10.4 $|$ 6.8e-10 & 33(126) $|$ 95.5 $|$ 8.9e-09\\
					qssp30new & 20 $|$ 0.7 $|$ 3.9e-10 & 13 $|$ 0.3 $|$ 5.5e-11 & 13(42) $|$ 1.0 $|$ 7.7e-09\\
					qssp60new & 23 $|$ 3.6 $|$ 4.5e-10 & 13 $|$ 0.8 $|$ 1.9e-10 & 21(60) $|$ 7.0 $|$ 6.9e-11\\
					qssp180new & 29 $|$ 54.5 $|$ 8.1e-10 & 19 $|$ 12.8 $|$ 9.3e-10 & 21(69) $|$ 88.1 $|$ 9.6e-09\\
					sched5050s & 27 $|$ 0.9 $|$ 1.4e-09 & 21 $|$ 0.3 $|$ 2.6e-08 & 13(44) $|$ 0.5 $|$ 9.2e-09\\
					sched10050s & 29 $|$ 1.7 $|$ 7.7e-08 & 20 $|$ 0.4 $|$ 5.0e-07 & 67(1381) $|$ 30.1 $|$ 2.5e-09\\
					sched100100s & 28 $|$ 4.0 $|$ 9.1e-09 & 23 $|$ 0.8 $|$ 2.4e-06 & 100(1597) $|$ 39.3 $|$ 1.7e-06\\
					sched200100s & 36 $|$ 12.7 $|$ 1.0e-07 & 24 $|$ 1.5 $|$ 5.3e-08 & 52(1119) $|$ 59.5 $|$ 7.5e-09\\
					\hline
			\end{tabular} }
		\end{footnotesize}
	\end{center}
\end{table}

\section{Concluding remarks}
\label{sec-conclusion}
In this paper, we have employed the inexact ALM to solve  convex quadratic second-order cone programming problems (SOCPs). Under the quadratic growth condition, the KKT residual is shown to  possess a R-superlinear convergence rate based on recently developed results in the related topics. We also provide sufficient conditions for the quadratic growth condition to hold. Numerically, a practical SOCP solver is designed and implemented based on the proposed semismooth Newton-based ALM. Extensive numerical results on solving various classes of SOCPs demonstrate that our solver is highly efficient and robust. It has comparable performance to the highly powerful commercial solver Mosek and outperforms the well-known open source semidefinite-quadratic-linear programming solver SDPT3 on the tested problems. With fruitful applications of SOCPs in many fields, we believe that our  solver could serve as a promising toolbox for solving large-scale SOCPs in real-world applications.

\end{document}